\newtheorem{theorem}{Theorem} [section]
\newtheorem{proposition}[theorem]{Proposition}
\newtheorem{lemma}[theorem]{Lemma}
\newtheorem{corollary}[theorem]{Corollary}
\theoremstyle{definition}
\newtheorem{definition}[theorem]{Definition}
\theoremstyle{remark}
\newtheorem{remark}[theorem]{Remark}
\newtheorem{remarks}[theorem]{Remarks}
\numberwithin{equation}{section}
\renewcommand{\epsilon}{\varepsilon}
\newcommand{\N}{\mathbb{N}}
\newcommand{\R}{\mathbb{R}}
\newcommand*\diff{\mathop{}\!\mathrm{d}} 
\newcommand{\eps}{\varepsilon}
\newcommand{\pzi}{\partial_{z_i}}
\newcommand{\pl}{\partial_{\lambda}}
\newcommand{\dhs}{{\dot{H}^s(\mathbb{R}^N)}}
\newcommand{\dhms}{{\dot{H}^{-s}(\mathbb{R}^N)}}
\newcommand{\Uzl}{{U[z,\lambda]}}
\newcommand{\lsc}{\left \langle}
\newcommand{\rsc}{\right \rangle}
\begin{document}

\title[Sharp extinction rates for fast diffusion equations]{Sharp extinction rates for positive solutions of fast diffusion equations}

\author[T.~König]{Tobias König}
\address[T.~König]{Goethe-Universität Frankfurt, Institut für Mathematik, 
Robert-Mayer-Str. 10, 60325 Frankfurt am Main, Germany.}
\email[T.~König]{koenig@mathematik.uni-frankfurt.de}

\author[M.~Yu]{Meng Yu}
\address[M.~Yu]{Goethe-Universität Frankfurt, Institut für Mathematik, 
Robert-Mayer-Str. 10, 60325 Frankfurt am Main, Germany.}
\email{yumeng161@mails.ucas.ac.cn}

\thanks{}

\begin{abstract}
Let $s \in (0, 1]$ and $N > 2s$. It is known that positive solutions to the (fractional) fast diffusion equation $\partial_t u + (-\Delta)^s (u^\frac{N-2s}{N+2s}) = 0$ on $(0, \infty) \times \R^N$ with regular enough initial datum  extinguish after some finite time $T_* > 0$. More precisely, one has $\frac{u(t,\cdot)}{U_{T_*, z, \lambda}(t,\cdot)} - 1 =o(1)$  as $t \to T_*^-$ for a certain extinction profile $U_{T_*, z, \lambda}$, uniformly on $\R^N$. In this paper, we prove the quantitative bound $ \frac{u(t,\cdot)}{U_{T_*, z, \lambda}(t,\cdot)} - 1 = \mathcal O(  (T_*-t)^\frac{N+2s}{N-2s+2})$, in a natural weighted energy norm. The main point here is that the exponent $\frac{N+2s}{N-2s+2}$ is sharp. This is the analogue of a recent result by Bonforte and Figalli (CPAM, 2021) valid for $s = 1$ and bounded domains $\Omega \subset \R^N$. Our result is new also in the local case $s = 1$. The main obstacle we overcome is the degeneracy of an associated linearized operator, which generically does not occur in the bounded domain setting.  

For a smooth bounded domain $\Omega \subset \R^N$, we prove similar results for positive solutions to $\partial_t u + (-\Delta)^s (u^m) = 0$ on $(0, \infty) \times \Omega$ with  Dirichlet boundary conditions when $s \in (0,1)$ and $m \in (\frac{N-2s}{N+2s}, 1)$, under a non-degeneracy assumption on the stationary solution. An important step here is to prove the convergence of the relative error, which is new for this case.
\end{abstract}

\maketitle

\section{Introduction and main result}

\subsection{Fast diffusion on $\R^N$}
For a given initial datum $u_0$, let $u$ be a solution to the \emph{(fractional) fast diffusion equation}
\begin{equation}\label{fde}
\begin{cases}
\partial_t u + (-\Delta)^s (|u|^{m-1}u) = 0, & t>0, \ x \in \R^N,  \\
u(0,x) = u_0(x), & x \in \R^N,
\end{cases}
\end{equation}
with {$s \in (0,1]$, $ N > 2s $ and}
\begin{align}\label{eq:m-fde}
    m = \frac{N-2s}{N+2s}.
\end{align}
It is proved in \cite{MR604277} for $s =1$ and in \cite{MR2954615} for $s \in (0,1)$ that given any initial datum $u_0 \geq 0$ with $ u_0 \in L^1(\R^N) \cap L^q(\R^N) $ for some $q > \frac{2N}{N+2s} $, there is a finite extinction time $T_* = T_*(u_0)$ and a unique strong solution $u$ such that $u(t,\cdot) > 0$ for every $t \in (0,T_*)$ and $u(t,\cdot) \equiv 0$ for $t \geq T_*$.

The main question we address here is to give a fine description of the extinction behavior of the solution $u(t,x)$ as $t \to T_*^-$. To explain this in more detail, given $T_* > 0$ we introduce a family of exact solutions to \eqref{fde} which has separated variables and extinguishes precisely at time $T_*$. Letting 
\begin{equation}
    \label{p definition RN}
    p := \frac{1}{m} = \frac{N+2s}{N-2s},
\end{equation}
this family is given, for parameters $z \in \R^N$ and $\lambda > 0$, by
\begin{align}\label{extinction-prof}
    U_{T_*,z,\lambda}(t,x):= \left(\frac{p-1}{p}\right)^{\frac{p}{p-1}}(T_* -t)^{\frac{p}{p-1}} U[z,\lambda]^{p}(x), \qquad t \in [0,T_*], \ x \in \R^N.
\end{align}
Here $U[z, \lambda]$ is the so-called \emph{Aubin--Talenti bubble function}, which is given by
\begin{equation}
\label{U z lambda}
U[z, \lambda](x) =  2^{\frac{N-2s}{2}}\left(\frac{\Gamma(\frac{N+2s}{2})}{\Gamma(\frac{N-2s}{2})}\right)^{\frac{N-2s}{4s}} \left( \frac{\lambda}{1 + \lambda^2|x-z|^2} \right)^\frac{N-2s}{2}, \qquad x \in \R^N.
\end{equation} 
Indeed, the choice of the constant in \eqref{U z lambda} ensures that 
\begin{equation}
    \label{U equation}
    (-\Delta)^s U[z, \lambda] = U[z, \lambda]^p \qquad \text{ on } \R^N
\end{equation} 
for all $z$ and $\lambda$. Using \eqref{U equation}, it can be checked by direct computation that the functions \eqref{extinction-prof} solve \eqref{fde}, for every $T_*$, $z$ and $\lambda$.

For $s = 1$, a fundamental result by Del Pino and Sáez \cite[Theorem 1.1]{MR1857048} guarantees that if 
\begin{equation}
    \label{condition delPino-Saez}
    0 \leq u_0 \in C(\R^N), \quad u_0 \not \equiv 0 \quad \text{ and } \, \sup_{x \in \R^N} (1 + |x|^{N+2}) u_0(x) < \infty
\end{equation}   and $u$ is the associated solution to \eqref{fde} with extinction time $T_* >0$, then there exist $z \in \R^N$ and $\lambda >0$ such that $u$ converges to $U_{T_*, z, \lambda}$ \emph{uniformly in relative error}. That is, 
\begin{align}\label{eq:asy}
\left\|\frac{u(t,\cdot)}{U_{T_*, z, \lambda}(t,\cdot)} - 1 \right\|_{L^\infty(\R^N)} \to 0 \qquad \text{ as } t \to T_*^-. 
\end{align}
The same conclusion is proved for $s \in (0,1)$ in \cite[Theorem 1.3]{Jin2014} under the slightly stronger assumption that
\begin{equation}
    \label{condition jin-xiong}
    \begin{aligned}
        & 0 < u_0 \in C^2(\R^N) \quad \text{ and  } \\
        & \text{$|x|^{2s-N} u^m_0\left( \frac{x}{|x|^2} \right)$ can be extended to a positive $C^2$-function in the origin.} 
    \end{aligned}
\end{equation}  
The main goal of our present work is to improve the qualitative convergence \eqref{eq:asy} to a \emph{quantitative} estimate of the form $\left\|\frac{u(t,\cdot)}{U_{T_*, z, \lambda}(t,\cdot)} - 1 \right\| \lesssim (T_* - t)^\kappa$ for some appropriate norm $\|\cdot\|$. In the estimate we will give, the exponent $\kappa = \kappa(N,s) > 0$ is  explicit, universal, and \emph{best possible}.

A major part of the subsequent analysis is most naturally and efficiently carried out after a change of time variable which we introduce now. 

Namely, for $u$ a nonnegative solution to \eqref{fde} and $p = \frac{1}{m} = \frac{N+2s}{N-2s}$, we set 
\begin{align}
\label{w in terms of u}
    w(\tau, x) :=\left(\frac{u(t,x)}{\left(\frac{p-1}{p}\right)^{\frac{p}{p-1}}(T_*-t)^{\frac{p}{p-1}} }\right)^{1/p} = \left(\frac{u(t,x)}{U_{T_*, z,\lambda}(t,x)}\right)^{1/p} U[z,\lambda](x),
\end{align}
where $t \in (0, T_*)$ and $\tau \in (0, \infty)$ are related by  
\begin{equation}
\label{change of variables t tau}
t = T_*\left(1-\exp\left(\frac{-(p-1)\tau}{p}\right)\right) \qquad \Leftrightarrow \qquad  T_*-t = T_*\exp\left(\frac{-(p-1)\tau}{p}\right). 
\end{equation} 

Straightforward computation shows that $w$ solves the equation 
\begin{align}\label{w equation}
\begin{cases}
\partial_\tau w^p + (-\Delta)^s w  = w^p, & \tau \in (0, \infty), \ x \in \R^N,  \\
w(0,x) = w_0(x), & x \in \R^N,
\end{cases}
\end{align}
with $w_0(x) = \left(\frac{p-1}{p}\right)^{-\frac{1}{p-1}} T_*^{-\frac{1}{p-1}} u_0(x)^{1/p} \geq 0$.

Through \eqref{w in terms of u}, 
the relative error convergence \eqref{eq:asy} is equivalent to 
\begin{align}\label{eq:asy w}
\left\|\frac{w(\tau,\cdot)}{U[z, \lambda]} - 1 \right\|_{L^\infty(\R^N)} \to 0 \qquad \text{ as } \tau \to \infty.
\end{align}

The following is our main theorem. It contains a quantitative bound on the distance between $u(t, \cdot)$ and $U_{T_*, z, \lambda}(t, \cdot)$, respectively between $w(\tau, \cdot)$ and $U[z, \lambda]$, measured in terms of the homogeneous Sobolev norm
\[ \|v\|_\dhs := \left( \int_{\R^N} |(-\Delta)^{s/2} v|^2 \diff x \right)^{1/2}. \]

\begin{theorem}
\label{theorem exponential decay}
Suppose that $u_0$ satisfies \eqref{condition delPino-Saez} if $s = 1$ and \eqref{condition jin-xiong} if $s \in (0,1)$. For the associated solution $u$ to \eqref{fde}, let $T_*> 0$, $z \in \R^N$ and $\lambda > 0$ be such that \eqref{eq:asy} holds. 

Then the solution $w$ to \eqref{w equation} associated to $u$ via \eqref{w in terms of u} and \eqref{change of variables t tau} satisfies
\begin{equation}
    \label{w bound theorem}
    \|w(\tau, \cdot) - U[z, \lambda]\|_{\dot H^s(\R^N)} \lesssim \exp \left(- \frac{4s}{N-2s+2} \tau \right) \qquad \text{ as } \tau \to \infty.  
\end{equation} 
Equivalently, 
\begin{equation}
    \label{u bound theorem}
    \left\|\left(\frac{u(t,\cdot)}{U_{T_*, z, \lambda}(t,\cdot)} - 1 \right) U[z,\lambda] \right\|_{\dhs} \lesssim  (T_*-t)^\frac{N+2s}{N-2s+2} \qquad \text{ as } t \to T_*^-.
\end{equation} 
\end{theorem}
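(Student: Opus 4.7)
The plan is to work in the $w$-variable and linearize \eqref{w equation} around the stationary bubble $U := U[z,\lambda]$. Writing $w = U + v$ and using $w^p = U^p + pU^{p-1} v + O(v^2)$, $\partial_\tau w^p = p w^{p-1}\partial_\tau v$, together with $(-\Delta)^s U = U^p$, yields
\[
pU^{p-1}\partial_\tau v + (-\Delta)^s v - pU^{p-1} v = \mathcal N(v),
\]
with $\mathcal N(v) = O(v^2)$ in a suitable weighted sense; equivalently $\partial_\tau v = -\mathcal L v + (pU^{p-1})^{-1}\mathcal N(v)$, where $\mathcal L v := (pU^{p-1})^{-1}((-\Delta)^s v - pU^{p-1} v)$ is self-adjoint on $L^2(pU^{p-1}\,dx)$.

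The spectrum of $\mathcal L$ is known via stereographic projection to $\Sph$: one negative eigenvalue with eigenfunction $U$, a zero eigenvalue with eigenspace $\mathrm{span}\{\partial_{z_i}U,\,\partial_\lambda U\}$ (the Jacobi fields of translation and dilation), and positive eigenvalues starting at $\mu_\star = \frac{4s}{N-2s+2}$, matching the exponent in \eqref{w bound theorem}. To cope with the non-trivial kernel I introduce time-dependent modulation parameters $(z(\tau),\lambda(\tau))$ and decompose
\[
w(\tau,x) = U[z(\tau),\lambda(\tau)](x) + v(\tau,x),
\]
imposing that $v(\tau,\cdot)$ be orthogonal in $L^2(pU[z(\tau),\lambda(\tau)]^{p-1}\,dx)$ to $\partial_{z_i}U$ and $\partial_\lambda U$ evaluated at the current parameters. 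The qualitative convergence \eqref{eq:asy w} and the implicit function theorem produce such $(z(\tau),\lambda(\tau))\to(z,\lambda)$ for $\tau$ large, together with the ODE bound $|\dot z(\tau)|+|\dot\lambda(\tau)|\lesssim \|v(\tau)\|_\dhs$. The potentially unstable $U$-direction causes no trouble: since \eqref{eq:asy w} gives uniform control in relative error, the component of $v$ along $U$ is pinned by an $L^{p+1}$-type normalization and is quadratically small in $\|v\|_\dhs$, rather than an independent free parameter.

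The crux is then the energy estimate. Differentiating $E(\tau) := \int pU[z(\tau),\lambda(\tau)]^{p-1} v^2\,dx$ along the flow, using the orthogonality conditions to dispose of the zero modes, and invoking the spectral gap of $\mathcal L$ on the positive eigenspace, one obtains
\[
\tfrac{d}{d\tau} E(\tau) \le -2\mu_\star E(\tau) + (\text{superlinear corrections}),
\]
where the corrections combine $\mathcal N(v)$ with the contributions of $\dot z,\dot\lambda$ and of the quasilinear discrepancy $pw^{p-1}-pU^{p-1}$. Since \eqref{eq:asy w} makes these corrections of lower order for large $\tau$, Gronwall yields $E(\tau)\lesssim e^{-2\mu_\star\tau}$. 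A coercivity argument on the orthogonal complement then transfers this decay to $\|v(\tau)\|_\dhs\lesssim e^{-\mu_\star\tau}$; integrating the modulation ODE gives $|z(\tau)-z|+|\lambda(\tau)-\lambda|\lesssim e^{-\mu_\star\tau}$, and the triangle inequality delivers \eqref{w bound theorem}. The equivalence with \eqref{u bound theorem} is immediate from \eqref{w in terms of u} and \eqref{change of variables t tau}.

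The main obstacle is exactly the degeneracy of $\mathcal L$ highlighted in the abstract: without modulation, the non-trivial kernel coming from the non-compact invariance group on $\R^N$ would preclude any exponential decay of $w-U[z,\lambda]$ itself, and picking orthogonality conditions that expose precisely the sharp gap $\mu_\star$ rather than a worse value is what makes the exponent optimal. A secondary technical issue is the quasilinearity in time: the factor $pw^{p-1}$ on $\partial_\tau v$ differs from $pU^{p-1}$ at first order in $v$, generating additional nonlinear terms in both the modulation ODE and the energy estimate that must be kept of strictly lower order.
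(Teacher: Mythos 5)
Your overall strategy (modulation around a moving bubble plus a weighted $L^2$ energy estimate and Gronwall) is a genuinely different route from the paper, which instead runs an entropy--entropy-production argument on the functional $J$ from \eqref{J definition}: a ``dissipation'' identity (Lemma \ref{J decreasing}), an upper bound of $\|J'(w)\|^2_{L^2(U^{1-p})}$ by $-\frac{\diff}{\diff\tau}J(w)$ (Lemma \ref{lemma easy ineq}), and a sharp \L{}ojasiewicz-type inequality $J(w)-J(U)\le(\frac{1}{2\nu_{N+2}}+o(1))\|J'(w)\|^2_{L^2(U^{1-p})}$ (Lemma \ref{lemma hard ineq}), with the kernel handled by Taylor-expanding at the $\dot H^s$-closest bubble $U(\tau)$. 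However, your version has a genuine gap at the point you dismiss as causing ``no trouble'': the negative eigenvalue $\nu_0=1-p$ of $\mathcal L_{U}$ with eigenfunction proportional to $U$. In your energy $E(\tau)=\int pU^{p-1}v^2$, the $e_0$-component contributes \emph{positively} to $E$ and its linearized evolution is exponentially \emph{growing} ($\dot\sigma_0\approx\frac{p-1}{p}\sigma_0$), so the inequality $\frac{\diff}{\diff\tau}E\le-2\mu_\star E+o(E)$ is simply false unless $\sigma_0$ is controlled separately. There is no ``$L^{p+1}$-type normalization'' available for the flow \eqref{w equation} ($\int w^{p+1}$ is not conserved or monotone in a way that pins the $U$-direction), so the component of $v$ along $U$ is not a priori quadratically small. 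The correct way to salvage this within your framework is a slaving/forward-integration argument: since $v(\tau)\to0$, one must have $\sigma_0(\tau)=-\int_\tau^\infty e^{-|\kappa_0|(\sigma-\tau)}N_0(\sigma)\,\diff\sigma$ and hence $|\sigma_0(\tau)|\lesssim\sup_{\sigma\ge\tau}\|v(\sigma)\|^2$, which then has to be closed by a bootstrap. This is a substantive missing step, and it is precisely what the paper's $J$-based argument avoids: in the second variation of $J$ the $e_0$-component enters with the \emph{negative} coefficient $\tfrac{\nu_0}{2\mu_0}$, so it can be discarded from the upper bound in Lemma \ref{lemma hard ineq} without ever being estimated.

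Two secondary points. First, in your energy identity the quasilinear discrepancy produces the term $\int(pw^{p-1}-pU^{p-1})\,v\,\partial_\tau v$, which involves $\partial_\tau v$ and is not $o(E)$ by soft arguments; it must be coupled to the dissipation $\int w^{p-1}(\partial_\tau w)^2$ (as in Lemma \ref{J decreasing}) or to smoothing estimates, and you do not indicate how. Second, even granting the estimate $\frac{\diff}{\diff\tau}E\le-2\mu_\star E+o(1)E$, Gronwall only yields $E\lesssim e^{-(2\mu_\star-\eps)\tau}$ for every $\eps>0$; reaching the endpoint exponent requires upgrading the $o(1)$ error to an exponentially decaying one and integrating again, as in Step 2 of the paper's proof (using Lemmas \ref{rel_err_exponential}, \ref{J-ctrl-rho} and \ref{lemma U z lambda}). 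This bootstrap is absent from your outline.
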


The main point of this theorem is that the exponent $\frac{4s}{N-2s+2}$ in \eqref{w bound theorem}, and thus likewise the exponent $\frac{N+2s}{N-2s+2}$ in \eqref{u bound theorem}, is \emph{sharp} in the sense that it is given by the spectral gap of the linearization of \eqref{w equation} at the stationary solution $U[z, \lambda]$. A more detailed explanation of this sharpness can be found in Remark \ref{remark sharpness} below.

To our knowledge, it is only in the local case $s = 1$ and in cases different from ours that sharp extinction rates of solutions to the fast diffusion equation $\partial_t u = \Delta (u^m)$ (on either $\R^N$ or bounded domains $\Omega$) have so far been obtained:
\begin{itemize}
    \item In the important works \cite{MR2481073, BoDoGrVa}, sharp exponential convergence rates are deduced for solutions to $\partial_t u = \Delta (u^m)$ on $\R^N$, for general $m \in (0,1)$. However, this analysis is valid only for initial values $u_0$ which satisfy a rather restrictive trapping assumption. As observed in \cite{DaSe2008}, generic initial data fail to satisfy this assumption and present indeed a genuinely different extinction behavior. A more detailed discussion of this can be found in \cite[Section 1.3]{Ciraolo2018}. 
    \item For bounded domains $\Omega \subset \R^N$ with Dirichlet boundary conditions and subcritical diffusion exponent $m \in (\frac{N-2}{N+2},1)$, in the remarkable recent papers by Bonforte and Figalli \cite{Bonforte2021}, Akagi \cite{Akagi2023} and by Choi, McCann and Seis \cite{Choi2023} . 
\end{itemize}
 In particular, Theorem \ref{theorem exponential decay} is new also for the local case $s = 1$.

Our proof of Theorem \ref{theorem exponential decay} is mainly inspired by \cite{Akagi2023}. With respect to \cite{Akagi2023}, the decisive additional difficulty we face is that the problem on $\R^N$ is necessarily degenerate in the sense that the linearization of \eqref{w equation} at $U[z, \lambda]$ has a certain number of non-trivial solutions coming from the symmetries of the equation $(-\Delta)^s w = w^\frac{N+2s}{N-2s}$ on $\R^N$. These are generically absent in the case of a bounded domain treated in both \cite{Akagi2023} and \cite{Bonforte2021}. 

Dealing appropriately with these kernel elements is one of the main obstacles we overcome in this paper. For this, we exploit some ideas used in \cite{DeNitti2023} which are in turn inspired from \cite{Ciraolo2018}. The main technical innovations can be found in Lemmas \ref{lemma hard ineq} and \ref{lemma rho L rho estimate} below.

We close the discussion of Theorem \ref{theorem exponential decay} with some more remarks.

\begin{remarks}
\begin{enumerate}
[(a)]
\item For $s = 1$, the first exponential bound of the form \eqref{w bound theorem}, but with a non-explicit exponent, was obtained by Ciraolo, Figalli and Maggi in \cite{Ciraolo2018} (see also \cite{MR4090466}). In the recent paper \cite{DeNitti2023}, \eqref{w bound theorem} was proved for all $s \in (0,1]$, with an explicit, but non-sharp exponent.\footnote{We use this opportunity to point out that the definition of $U[z, \lambda]$ in \cite[eq. (1.4)]{DeNitti2023} is given with a wrong factor of $2^{2s}$. The factor $2^\frac{N-2s}{2}$ in \eqref{U z lambda} is the correct one. This can be conveniently checked, e.g., by following the computations in \cite[p. 13-14]{Frank2023}.}

\item As in \cite{Bonforte2021} and \cite{Akagi2023}, we prove convergence of $w$ towards $U[z,\lambda]$ with the  sharp exponent $\frac{4s}{N-2s+2}$  not in the strong uniform norm from \eqref{eq:asy w}, but in the weaker $\dot H^s$-norm. Through appropriate regularity theory, the bounds from Theorem \ref{theorem exponential decay} can subsequently be upgraded to yield quantitative convergence of the relative error. In this process,  sharpness of $\mu$ is however usually lost, compare e.g. \cite[Lemma 6.1]{Akagi2023} or \cite[eq. (1.16)]{Bonforte2021}. Indeed, by arguing as in  \cite[Proof of Theorem 2.8, Step 4]{DeNitti2023} (which relies on stereographic projection of \eqref{w equation} to the sphere and a Lipschitz regularity result from \cite{Jin2014}) we deduce from \eqref{w bound theorem} that 
\begin{equation}
    \label{w/U decay}
    \left \|\frac{w(\tau, \cdot)}{U[z, \lambda]} - 1 \right\|_{L^\infty(\R^N)} \lesssim \exp \left(-\frac{8s}{(N-2s+2)^2} \tau \right ), 
\end{equation} 
or equivalently
\[ \left \|\frac{u(t, \cdot)}{U_{T_*, z, \lambda}(t,\cdot)} - 1 \right\|_{L^\infty(\R^N)} \lesssim (T_* - t)^\frac{2(N+2s)}{(N-2s+2)^2}. \]

\item In the impressive recent paper \cite{Choi2023}, for $s = 1$ and bounded domains $\Omega$, the authors manage to overcome two of the difficulties discussed above, namely \emph{i)} dealing with a possibly non-trivial kernel of the linearized operator  and \emph{ii)} obtaining the sharp decay behavior in $L^\infty$-norm for the relative error function. The proof of these results relies on some quite involved smoothing estimates (see \cite[Section 5]{Choi2023}), which are not required for our proof of Theorem \ref{theorem exponential decay}. 
\end{enumerate}
\end{remarks}

\subsection{Fractional fast diffusion on a bounded domain}

We now turn to the fractional fast diffusion equation 
\begin{equation}\label{fde-bdd}
\begin{cases}
\partial_t u + (-\Delta)^s (|u|^{m-1}u) = 0, & t>0, \ x \in \Omega,  \\
u(0,x) = u_0(x), & x \in \Omega, 
\end{cases}
\end{equation}
on  a bounded domain $\Omega \subset \R^N $ with smooth boundary.\footnote{The majority of the statements we use are true if one only assumes $C^{2,\alpha}$ or even $C^{1,1}$ boundary regularity. But we do not strive for optimality in that respect and since some of the references we rely on, e.g. \cite{BoSiVa}, assume a smooth boundary, we do the same for simplicity.} Here, we assume $ s \in (0,1) $ and $ m \in ( \frac{N-2s}{N+2s} , 1 ) $.  The operator $(-\Delta)^s$ can be understood as any of the non-equivalent realizations (restricted, censored and spectral) of the fractional Dirichlet Laplacian on $\Omega$ which are studied most frequently. Rigorous definitions and some properties of these operators, including their domain $H(\Omega)$ and its dual $H^*(\Omega)$, are given in Section \ref{subsubsec laplacians}. Note in particular that in \eqref{fde-bdd} and the following equations like \eqref{w equation-bdd} and \eqref{stationary equation-bdd} the argument of $(-\Delta)^s$ is always understood to satisfy appropriate Dirichlet boundary conditions in the sense detailed in Section \ref{subsubsec laplacians}.

By \cite[Theorem 2.2]{BoSiVa} and \cite[Proposition 1]{BoIbIs}, we know that for any initial datum $u_0$ which satisfies 
\begin{equation}
    \label{u0 condition bounded}
    \begin{aligned}
    u_0 &\geq 0, \qquad u_0 \in H^*(\Omega), \\
       u_0 &\in L^1(\Omega) \quad \text{ if } \quad m \in \left(\frac{N-2s}{N}, 1\right), \quad \text{respectively} \\  
        \quad u_0 &\in L^q(\Omega) \quad  \text{for some $q > \frac{N(1-m)}{2s}$ } \quad \text{if } \quad m \in \left(\frac{N-2s}{N+2s}, \frac{N-2s}{N}\right],
    \end{aligned}
\end{equation}
there exists a unique solution $u \geq 0$ to \eqref{fde-bdd} which extinguishes after a finite time $T_* = T_*(u_0)$. 
(See Section \ref{subsubsec notions of sol} for a precise definition of solutions.)

For any solution $u = u(t,x) \geq 0$ to \eqref{fde-bdd}, we again define a function $w(\tau, x) \geq 0$ through 
\begin{align}
\label{w in terms of u bdd}
    w(\tau, x) :=\left(\frac{u(t,x)}{\left(\frac{p-1}{p}\right)^{\frac{p}{p-1}}(T_*-t)^{\frac{p}{p-1}} }\right)^{1/p},
\end{align}
for 
\begin{equation}
    \label{p definition bdd}
    p = \frac{1}{m} \in \left(1, \frac{N+2s}{N-2s}\right),
\end{equation}
where $t$ and $\tau$ are again related by \eqref{change of variables t tau}.  This function solves 
\begin{align}\label{w equation-bdd}
\begin{cases}
\partial_\tau ( w^p ) + (-\Delta)^s w  = w^p, & \tau \in (0, \infty), \ x \in \Omega,  \\
w(0,x) = w_0(x), & x \in \Omega,
\end{cases}
\end{align}
with $w_0(x) = \left(\frac{p-1}{p}\right)^{-\frac{1}{p-1}}(T_*-t)^{-\frac{1}{p-1}}  u_0(x) ^{\frac{1}{p}}$.

Assuming that a solution to \eqref{w equation-bdd} converges to a stationary state $\varphi \geq 0$, i.e., a nonnegative solution of 
\begin{align}\label{stationary equation-bdd}
(-\Delta)^s \varphi  = \varphi^p, & \ x \in \Omega,
\end{align}
we are again interested in the optimal convergence rate.

We say that a solution $\varphi \geq 0$ to \eqref{stationary equation-bdd} is \emph{non-degenerate} if the linearized operator 
\[ \mathcal L_\varphi := (-\Delta)^s - p \varphi^{p-1} \]
is invertible. By standard arguments (see Lemma \ref{lemma eigenvalues bounded}), there exists a basis of eigenfunctions $(e_j)_{j \in \mathbb N_0}$ and a sequence of eigenvalues $(\nu_j)_{\mathbb N_0}$ with $\nu_j \to \infty$ as $j \to \infty$ satisfying 
\[ \mathcal L_\varphi e_j = \nu_j  \varphi^{p-1} e_j.  \]
Thus $\varphi$ is non-degenerate if and only if none of the $\nu_j$ is equal to zero.  

For a solution $\varphi$ to \eqref{stationary equation-bdd}, we denote by $U_{T_*, \varphi} := \left(\frac{p-1}{p}\right)^{\frac{p}{p-1}}(T_*-t)^{\frac{p}{p-1}} \varphi(x)^p$ the solution of \eqref{fde-bdd} associated to $\varphi$ through \eqref{w in terms of u}. 

\begin{theorem}\label{thm-nonnegative-bdd}
    Let $p \in (1, \frac{N+2s}{N-2s})$ as defined in \eqref{p definition bdd}. Let $u_0$ satisfy \eqref{u0 condition bounded}, and let $u$ be the solution to \eqref{fde-bdd} with initial value $u_0$.  Let $ w = w( \tau , x ) $ be the solution to \eqref{w equation-bdd} associated to $u$ via \eqref{w in terms of u bdd} and let $\varphi$ be a positive solution to \eqref{stationary equation-bdd} such that $ w(\tau,\cdot) \to \varphi $ strongly in $H(\Omega)$ as $ \tau \to +\infty$. Assume that $ \varphi $ is non-degenerate. 
    
    Then 
    \begin{equation}
    \label{ineq thm bounded}
        0 \leq \| w(\tau) - \varphi\|_{H(\Omega)} \lesssim \exp\left(- \frac{\tilde{\nu}}{p} \tau\right) \qquad \text{ as }  \tau \to \infty, 
    \end{equation}
  where $\tilde \nu$ denotes the smallest positive eigenvalue of $\mathcal L_\varphi$.

  Equivalently, 
  \begin{equation}
    \label{ineq thm bounded u}
    \left\|\left(\frac{u(t,\cdot)}{U_{T_*, \varphi}(t,\cdot)} - 1 \right) \varphi \right\|_{H(\Omega)} \lesssim  (T_*-t)^\frac{\tilde \nu}{p-1} \qquad \text{ as } t \to T_*^-.
\end{equation} 
\end{theorem}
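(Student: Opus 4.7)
The plan is to linearize equation \eqref{w equation-bdd} around the stationary solution $\varphi$ and exploit the weighted spectral structure of $\mathcal L_\varphi$ to extract the sharp exponential rate $\tilde\nu/p$. Setting $\rho(\tau,x) := w(\tau,x) - \varphi(x)$ and using $(-\Delta)^s \varphi = \varphi^p$, the equation for $\rho$ rewrites as
\[
p\, \varphi^{p-1}\, \partial_\tau \rho \;+\; \mathcal L_\varphi \rho \;=\; \mathcal N(\rho, \partial_\tau\rho),
\]
where $\mathcal N$ gathers the at-least-quadratic remainder coming from $w^p - \varphi^p = p\varphi^{p-1}\rho + O(\rho^2)$ and $w^{p-1} = \varphi^{p-1} + O(\rho)$. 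By Lemma \ref{lemma eigenvalues bounded} there is a basis of eigenfunctions $(e_j)_{j\geq 0}$ with $\mathcal L_\varphi e_j = \nu_j \varphi^{p-1} e_j$, orthonormal in the weighted inner product $\langle f, g\rangle_\varphi := \int_\Omega \varphi^{p-1} fg\,dx$. Non-degeneracy forces $\nu_j \neq 0$: finitely many are strictly negative (at the very least $\nu_0 = -(p-1)$, realized by $e_0 \propto \varphi$), while all the others satisfy $\nu_j \geq \tilde\nu > 0$. Writing $\rho(\tau) = \sum_j \alpha_j(\tau)\, e_j$ and testing against $e_j$ in $\langle \cdot,\cdot\rangle_\varphi$, the coefficients satisfy
\[
\dot\alpha_j(\tau) + \frac{\nu_j}{p}\alpha_j(\tau) = R_j(\tau),\qquad |R_j(\tau)| \lesssim \|\rho(\tau)\|_{H(\Omega)}^2,
\]
valid once $\|\rho(\tau)\|_{H(\Omega)}$ is small enough, which holds for $\tau$ large by the assumed convergence.

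Split $\rho = \rho_+ + \rho_-$ according to the sign of $\nu_j$. For the \emph{stable} part $\rho_+ := \sum_{\nu_j \geq \tilde\nu}\alpha_j e_j$, the spectral gap gives
\[
\langle \mathcal L_\varphi \rho_+, \rho_+\rangle \;\geq\; \tilde\nu\, \langle \rho_+, \rho_+\rangle_\varphi.
\]
Testing the evolution equation against $\rho_+$ in $\langle\cdot,\cdot\rangle_\varphi$ and absorbing the superlinear error via the smallness of $\|\rho\|_{H(\Omega)}$, a standard Gronwall inequality implies $\|\rho_+(\tau)\|_{H(\Omega)} \lesssim \exp(-\tilde\nu\tau/p)$.

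The main obstacle is controlling the \emph{unstable} part $\rho_- := \sum_{\nu_j<0}\alpha_j e_j$, which lives in a finite-dimensional subspace on which the linearized flow would expand at rates $|\nu_j|/p$. Following the strategy of \cite{Akagi2023}, I use that $\alpha_j(\tau) \to 0$ as $\tau \to \infty$ for each $\nu_j<0$ in order to integrate the unstable ODE \emph{backward} from $+\infty$:
\[
\alpha_j(\tau) \;=\; -\int_\tau^\infty e^{|\nu_j|(\tau-\sigma)/p}\, R_j(\sigma)\, d\sigma.
\]
Combined with $|R_j(\sigma)| \lesssim \|\rho(\sigma)\|_{H(\Omega)}^2$ and a bootstrap argument initialized via monotonicity of the energy $\tfrac12 \|w\|_{H(\Omega)}^2 - \tfrac{1}{p+1}\|w\|_{L^{p+1}}^{p+1}$ along the flow, this yields $\|\rho_-(\tau)\|_{H(\Omega)} \lesssim \|\rho(\tau)\|_{H(\Omega)}^2$, so that $\rho_-$ is negligible compared to $\rho_+$ in the small-$\rho$ regime. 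Combining the two bounds gives \eqref{ineq thm bounded}, and the equivalent estimate \eqref{ineq thm bounded u} follows immediately from the change of variables \eqref{w in terms of u bdd}--\eqref{change of variables t tau} upon noting that $\tfrac{\tilde\nu}{p}\cdot\tfrac{p}{p-1} = \tfrac{\tilde\nu}{p-1}$.
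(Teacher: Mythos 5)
Your proposal takes a genuinely different route from the paper: you linearize the PDE and run a mode-by-mode ODE/Gronwall analysis (Duhamel forward for the stable modes, backward from $\tau=+\infty$ for the unstable ones), whereas the paper follows the entropy--entropy-production scheme of \cite{Bonforte2021, Akagi2023}: it proves the two functional inequalities of Lemmas \ref{lemma easy ineq} and \ref{lemma hard ineq} relating $J(w)-J(\varphi)$, $\|J'(w)\|^2_{L^2(\varphi^{1-p})}$ and $-\frac{\diff}{\diff\tau}J(w)$, integrates the resulting differential inequality for $J(w)-J(\varphi)$, and then upgrades to the $H(\Omega)$-norm via Lemma \ref{J-ctrl-rho}. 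Your backward-integration treatment of the unstable modes is a legitimate alternative to the paper's observation that the negative-eigenvalue block contributes with a favorable (negative) sign to the second-order expansion of $J$ and can simply be discarded. However, as written your argument has genuine gaps.

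First, the source terms $R_j$ are \emph{not} $\mathcal O(\|\rho\|_{H(\Omega)}^2)$. Writing $\partial_\tau(w^p)=p(\varphi+\rho)^{p-1}\partial_\tau\rho$, the remainder $\mathcal N$ contains the term $-p\left[(\varphi+\rho)^{p-1}-\varphi^{p-1}\right]\partial_\tau\rho$, which involves $\partial_\tau\rho$ and cannot be bounded by any power of $\|\rho(\tau)\|_{H(\Omega)}$ alone. The only available control on $\partial_\tau\rho$ comes from the dissipation identity (Lemma \ref{J decreasing}), which gives $\int_0^\infty\int_\Omega w^{p-1}(\partial_\tau w)^2\,\diff x\,\diff\tau<\infty$, i.e.\ a bound that is merely square-integrable in time, not pointwise small. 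Your Gronwall and Duhamel steps would have to be rerun with an $L^2$-in-time forcing, and this is precisely the difficulty that the paper's ``easy inequality'' (Lemma \ref{lemma easy ineq}) is designed to absorb, by comparing $\|J'(w)\|^2$ directly with $-\frac{\diff}{\diff\tau}J(w)$. Second, all of your expansions ($w^p-\varphi^p$, $w^{p-1}-\varphi^{p-1}$) with uniform constants in the weighted spaces require pointwise control of $w/\varphi$ up to the boundary, where $\varphi$ vanishes; smallness of $\|\rho\|_{H(\Omega)}$ does not give this. Establishing $\|w(\tau)/\varphi-1\|_{L^\infty(\Omega)}\to0$ is Theorem \ref{theorem uniform_conv_rel_err}, which occupies a substantial part of the paper (Global Harnack Principle via Green's function estimates) and is entirely absent from your argument. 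Two further, smaller points: for $1<p<2$ the Taylor remainder is only $\mathcal O(\rho^{1+\gamma})$ with $\gamma=p-1<1$, not quadratic (cf.\ Lemma \ref{Taylor_for}); and your stable-part Gronwall naturally yields decay of the weighted $L^2(\Omega,\varphi^{p-1}\diff x)$ norm, not of $\|\rho_+\|_{H(\Omega)}$ --- passing to the $H(\Omega)$ norm at the same rate is nontrivial and is what Lemma \ref{J-ctrl-rho} accomplishes in the paper.
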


As in Theorem \ref{theorem exponential decay}, the exponent $\frac{\tilde \nu}{p}$ in \eqref{ineq thm bounded} (and thus likewise the exponent $\frac{\tilde \nu}{p-1}$ in \eqref{ineq thm bounded u}) is sharp with respect to the linearization about the limit solution $\varphi$, in the sense detailed in Remark \ref{remark sharpness}.

\begin{remarks}
\begin{enumerate}[(a)]
    \item For $s = 1$, Theorem \ref{thm-nonnegative-bdd} is proved in \cite{Bonforte2021, Akagi2023, Choi2023}. One may note that instead of the $H^1(\Omega)$ norm as in \ref{ineq thm bounded}, for technical reasons the authors in \cite{Bonforte2021} and \cite{Akagi2023} bound different energy quantities. Indeed, in \cite[Theorem 1.2]{Bonforte2021} the quantity $\int_\Omega |w - \varphi|^2 \varphi^{p-1}$ is employed, and the result in \cite[Theorem 1.4]{Akagi2023} uses the quantity $J(w(\tau, \cdot)) - J(\varphi)$, where $J$ is defined in \eqref{J definition} below (with $\R^N$ replaced by $\Omega$). These choices are however not essential,  since as observed in \cite[Corollaries 1.5 and 1.6]{Akagi2023} the sharp exponential convergence of $J(w(\tau, \cdot)) - J(\varphi)$ implies that of the other two quantities. (See also Lemma \ref{J-ctrl-rho} below.) Conversely, convergence in $H(\Omega)$ together with Sobolev's inequality implies convergence of $\int_\Omega |w - \varphi|^2 \varphi^{p-1}$ and of  $J(w(\tau, \cdot)) - J(\varphi)$. 

    \item Using Proposition \ref{proposition weighted smoothing effects-bdd}, Theorem \ref{thm-nonnegative-bdd} yields a quantitative decay rate for the relative error (with a non-sharp exponent), namely 
    \[ \left \| \frac{w(\tau, \cdot)}{\varphi} - 1 \right\|_{L^\infty(\Omega)} \lesssim   \exp \left( - \frac{\tilde \nu}{p} \frac{s}{N+ \gamma} \right) \qquad \text{ as } \tau \to \infty, \]
    or equivalently
    \[ \left \| \frac{u(t, \cdot)}{U_{T_*, \varphi}} - 1  \right\|_{L^\infty(\Omega)} \lesssim (T_*-t)^{\frac{\tilde \nu}{p-1} \frac{s}{N+\gamma}} \qquad \text{ as } t \to T_*^-,  \]
    where $\gamma \in (0,1]$ depends on the chosen realization of the fractional Dirichlet Laplacian; see  \eqref{gamma values}. 

    \item  In the case where $(-\Delta)^s$ is the restricted fractional Laplacian (see Section \ref{subsubsec laplacians}), it is proved in the recent preprint \cite{AkSa} that a nonnegative solution $w(\tau)$ to \eqref{w equation-bdd} always converges in $H(\Omega)$ to a single stationary state $\varphi$ solving \eqref{stationary equation-bdd}. This is the analogue of a fundamental result by Feireisl and Simondon \cite{Feireisl2000} for $s =1$. In this case, the assumption that $ w(\tau,\cdot) \to \varphi $ in $H(\Omega)$ can thus be removed from Theorem \ref{thm-nonnegative-bdd}. The proof from \cite{AkSa} appears to work also for other definitions of the fractional Dirichlet Laplacian. To emphasize the focus of our result and simplify its statement, we nevertheless choose to leave the convergence assumption of Theorem \ref{thm-nonnegative-bdd} in place.

\item In \cite{akagi2023optimal}, the authors prove optimal decay rates for \emph{sign-changing} solutions to fast diffusion equations for $s=1$ and  bounded domain $\Omega$. For $s \in (0,1)$, the same  remains an open problem to our knowledge. For sign-changing solutions to \eqref{fde} on all of $\R^N$, stating an analogue of Theorem \ref{theorem exponential decay} for sign-changing solutions is equally open for $s \in (0,1]$.
\end{enumerate}
\end{remarks}

An additional difficulty in the proof of Theorem \ref{thm-nonnegative-bdd} compared to that of Theorem \ref{theorem exponential decay} is that, to the best of our knowledge, no statement on the convergence as $\tau \to \infty$ of the relative error 
\begin{equation}\label{relative-error-def}
    \mathrm{h}(\tau, x)  := \frac{w(\tau,x)}{\varphi(x)} - 1
\end{equation}
is available in the literature for fractional $s \in (0,1)$ on bounded domains $\Omega$. 

Since the uniform convergence of $\mathrm h(\tau, \cdot)$ forms the starting point of our proof strategy for Theorem \ref{thm-nonnegative-bdd}, we prove the following. 

\begin{theorem}\label{theorem uniform_conv_rel_err}
   Let $p \in (1, \frac{N+2s}{N-2s})$. Let $u_0$ satisfy \eqref{u0 condition bounded}, and let $u$ be the solution to \eqref{fde-bdd} with initial value $u_0$.  Let $ w = w( \tau , x ) $ be the solution to \eqref{w equation-bdd} associated to $u$ via \eqref{w in terms of u bdd} and let $\varphi$ be a positive solution to \eqref{stationary equation-bdd} such that $ w(\tau,\cdot) \to \varphi $ strongly in $H(\Omega)$ as $ \tau \to +\infty$. Let $\mathrm{h}$ be the relative error defined in \eqref{relative-error-def}. Then,
    \begin{equation}
    \label{rel error conv thm}
        \lim_{\tau \to \infty} \| \mathrm{h}(\tau,\cdot) \|_{L^\infty (\Omega)} = 0.
    \end{equation}
\end{theorem}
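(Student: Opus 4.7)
The plan is to bootstrap the hypothesis $w(\tau,\cdot) \to \varphi$ in $H(\Omega)$ into uniform convergence of the ratio $w(\tau,\cdot)/\varphi$ via three classical ingredients: (i) two-sided pointwise bounds $c_1 \varphi \leq w(\tau,\cdot) \leq c_2 \varphi$, uniform for large $\tau$; (ii) equicontinuity of the family $\{w(\tau,\cdot)/\varphi\}_{\tau \geq \tau_0}$ on $\overline{\Omega}$; and (iii) a subsequential compactness argument in which the $H$-convergence pins down the only possible limit. Write $\delta(x) := \dist(x, \partial \Omega)$ and let $\gamma$ be the boundary exponent associated with the chosen realization of $(-\Delta)^s$, cf.\ \eqref{gamma values}.

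For (i), I will combine the weighted smoothing effects of Proposition \ref{proposition weighted smoothing effects-bdd} with the global Harnack-type estimates for the fractional fast diffusion equation on bounded domains developed in \cite{BoSiVa} and related works. The smoothing effect should give a uniform bound on $w(\tau,\cdot)/\delta^\gamma$ in $L^\infty$ for $\tau \geq \tau_0$. Since $\varphi$ is a positive solution of \eqref{stationary equation-bdd}, standard boundary regularity yields $\varphi(x) \asymp \delta(x)^\gamma$ on $\overline{\Omega}$, from which the upper bound $w(\tau,\cdot) \leq c_2 \varphi$ follows. The matching lower bound I plan to obtain by a barrier/comparison argument: the strict positivity of strong solutions on compact subsets of $\Omega$, together with the matching boundary rate coming from the smoothing effect, lets one insert a small multiple of $\varphi$ below $w(\tau_0,\cdot)$ at some fixed $\tau_0$; this estimate then propagates forward in $\tau$ via the comparison principle for \eqref{w equation-bdd}.

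For (ii), I will invoke interior and boundary H\"older regularity for fractional parabolic equations of porous medium type (as developed for bounded domains in works cited in the introduction, and available more generally through the theory of nonlocal quasilinear parabolic equations). Combined with the two-sided bounds from (i) and the H\"older behaviour $\varphi \asymp \delta^\gamma$, this will yield a uniform modulus of continuity on $\overline{\Omega}$ for the ratios $w(\tau,\cdot)/\varphi$. By Arzel\`a--Ascoli, the family $\{w(\tau,\cdot)/\varphi : \tau \geq \tau_0\}$ is then precompact in $C(\overline{\Omega})$.

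Finally, for (iii), I take any sequence $\tau_n \to \infty$ and extract a further subsequence along which $w(\tau_n,\cdot)/\varphi \to g$ uniformly on $\overline{\Omega}$, with $c_1 \leq g \leq c_2$. The hypothesis $w(\tau_n,\cdot) \to \varphi$ in $H(\Omega)$, hence in $L^2(\Omega)$ by Sobolev embedding, forces $g \, \varphi \equiv \varphi$ almost everywhere, i.e.\ $g \equiv 1$, since $\varphi > 0$ in $\Omega$ and $g$ is continuous. As the subsequence was arbitrary, the whole family converges uniformly to $1$, which is \eqref{rel error conv thm}. The main obstacle is step (i): establishing the \emph{uniform-in-$\tau$} matching \emph{lower} bound of $w(\tau,\cdot)$ by a positive multiple of $\varphi$ up to the boundary, since the global Harnack theory for $s \in (0,1)$ on bounded domains is considerably more delicate than in the local case and must be handled separately for each realization (restricted, spectral, censored) of the Dirichlet fractional Laplacian, with its own boundary exponent $\gamma$.
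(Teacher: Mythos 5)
Your overall architecture (two-sided bound, equicontinuity, subsequential identification of the limit) is reasonable in spirit, but two of its three pillars contain genuine gaps, and the paper's actual proof deliberately avoids both of them. First, the lower bound in step (i): a static barrier $\epsilon\varphi$ with $\epsilon\in(0,1)$ is a strict \emph{super}solution of \eqref{w equation-bdd}, since $\partial_\tau((\epsilon\varphi)^p)+(-\Delta)^s(\epsilon\varphi)-(\epsilon\varphi)^p=(\epsilon-\epsilon^p)\varphi^p>0$ for $p>1$; so it cannot be inserted below $w$ and propagated forward. If you instead allow a time-dependent factor $\epsilon(\tau)$, the subsolution condition forces $\epsilon'\leq\frac{\epsilon^p-\epsilon}{p\epsilon^{p-1}}<0$ and the barrier collapses in finite time --- this is just finite-time extinction in disguise, and it is exactly why comparison with separable profiles cannot produce the sharp rate $(T_*-t)^{m/(1-m)}$ near the extinction time. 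The paper obtains the matching lower bound by a completely different mechanism (Step 3 of the proof of Proposition \ref{GHP}): the Green's function representation from \cite{BoIbIs}, the lower Green's bound $\mathbb G(x,y)\gtrsim\Phi(x)\Phi(y)$ from \eqref{Greens estimate}, and a differential inequality for $\int_\Omega u(t,y)\Phi(y)\,\diff y$ fed by the already-established upper bound; the upper bound itself requires an iteration on the boundary exponent when $s\leq\frac12$. None of this can simply be cited from \cite{BoSiVa}, whose global Harnack principle is for $m>1$. (Also note that citing Proposition \ref{proposition weighted smoothing effects-bdd} as an ingredient is circular here: in the paper that proposition \emph{is} the proof of the theorem, and it is not a smoothing estimate for $w/\delta^\gamma$.)

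Second, step (ii) asks for far more than the regularity theory you invoke delivers. Uniform equicontinuity of $w(\tau,\cdot)/\varphi$ on $\overline\Omega$ is a boundary-Harnack-type statement: knowing $w(\tau,\cdot)\in C^\gamma(\overline\Omega)$ uniformly in $\tau$ together with $w\asymp\delta^\gamma$ does not even give \emph{continuity} of the quotient at $\partial\Omega$, because one needs the expansion $w(\tau,x)=c(\tau,x_0)\delta(x)^\gamma+o(\delta(x)^\gamma)$ with uniform control of the error, i.e.\ regularity of $w/\delta^\gamma$ rather than of $w$. Such uniform-in-$\tau$ fine boundary asymptotics for the fractional parabolic problem, for all three realizations of $(-\Delta)^s$, are not available in the literature (indeed the paper emphasizes that even the qualitative convergence \eqref{rel error conv thm} was previously unknown for $s\in(0,1)$ on bounded domains). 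The paper's route bypasses equicontinuity and compactness entirely: after the global Harnack principle, it combines the B\'enilan--Crandall time-monotonicity (Lemma \ref{lemma time_mono_est}) with a Green's-function bound on $\int_{\tau_0}^{\tau_1}\mathrm h(\tau,x)\,\diff\tau$ in terms of $\sup_\tau\|w(\tau)-\varphi\|_{H(\Omega)}$ (Lemma \ref{lemma fun_poi_ine}), yielding the quantitative estimate of Proposition \ref{proposition weighted smoothing effects-bdd}, from which the theorem is immediate. To repair your proof you would either have to establish the boundary Harnack expansion uniformly in $\tau$, or abandon steps (ii)--(iii) in favour of the Green's-function/B\'enilan--Crandall argument.
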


\begin{remarks}
\begin{enumerate}[(a)]
     \item In the case $ s = 1 $, Theorem \ref{theorem uniform_conv_rel_err} has first been proved in  \cite[Theorem 2.1]{MR2863762} based on \cite{Feireisl2000} and the global Harnack principle developed by \cite[Proposition 6.2]{MR1119195}, where $\Omega$ is a bounded domain with $C^2$ boundary. Later on, it was extended to a quantitative convergence by \cite{Bonforte2021} with a proof independent of \cite{MR2863762} and only assuming $\Omega$ with $C^{1,1}$ boundary. 

    For $\Omega = \R^N$, $s \in (0,1)$ and under the assumption \eqref{condition jin-xiong}, Theorem \ref{theorem uniform_conv_rel_err} is proved in \cite[Theorem 1.3]{Jin2014}. 

    \item Our proof of Theorem \ref{theorem uniform_conv_rel_err} relies on some estimates analogous to those developed for $s= 1$ in \cite{Bonforte2021}, which control  $\|\mathrm{h(\tau, \cdot)}\|_\infty$ in terms of an $H^s$-type energy quantity.  The technical difficulty in carrying out the arguments from \cite{Bonforte2021} in our case is the following:  In \cite{Bonforte2021}, the convergence \eqref{rel error conv thm} appears already as an \emph{assumption} in the relevant estimate (because for $s=1$ it is known to hold thanks to the previous result \cite{MR2863762}). In order to obtain \eqref{rel error conv thm} as a \emph{conclusion} and hence prove Theorem \ref{theorem uniform_conv_rel_err}, we need a weaker kind of a-priori control on $\mathrm{h}$ to insert into the machinery from \cite{Bonforte2021}. We obtain such control by means of a global Harnack estimate in the spirit of \cite{BoVa2015} based on sharp Green's function estimates.  

\end{enumerate}
   
\end{remarks}

The outline of the rest of this paper is as follows. 

In Section 2, we discuss the eigenvalues of the linearized operator associated to \eqref{U equation} as a prerequisite for the proof of Theorem \ref{theorem exponential decay}. 

In Section 3, we give the main argument for the proof of Theorem \ref{theorem exponential decay}

In Section 4, we discuss the bounded domain case and prove Theorems \ref{thm-nonnegative-bdd} and \ref{theorem uniform_conv_rel_err}. 

\textit{Notation and definitions. } On smooth enough functions $u$ on $\R^N$, for $s \in (0,1)$ the fractional Laplacian $(-\Delta)^s$ acts as $(-\Delta)^s u(x) = c_{N,s} \lim_{\eps \to 0} \int_{\R^N \setminus B_\eps(x)} \frac{u(x) - u(y)}{|x-y|^{N+2s}} \diff y$ for some $c_{N,s} > 0$. We denote by $\dot H^s(\R^N)$ the space of functions $u \in L^1_\text{loc}(\R^N)$ such that 
\[ \|u\|_\dhs^2 := \int_{\R^N} |(-\Delta)^\frac{s}{2} u|^2 \diff x = \frac{c_{N,s}}{2} \iint_{\R^N \times \R^N} \frac{|u(x) - u(y)|^2}{|x-y|^{N+2s}} \diff x \diff y < \infty. \] 

For functions $f, g: I \to [0, \infty)$ for some time interval $I \subset [0, \infty)$, we write $f \lesssim g$ if there is  $C > 0$ such that $f(t) \leq C g(t)$ for all $t \in I$. Likewise, we write $f \gtrsim g$ if there is $c > 0$ such that $f(t) \geq c g(t)$ for all $t \in I$. The constant $c$ and $C$ appearing in the paper are understood to be independent of time (though they may depend on $N$, $s$, $\Omega$ and $u_0$) and allowed to change from line to line.  Finally we will often write  $u(t) = u(t, \cdot)$ and $w(\tau) = w(\tau, \cdot)$ to ease notation.

\section{The linearized operator and its eigenvalues}
\label{section linearized operator}

We begin by introducing the functional 
 \begin{equation}
 \label{J definition}
  J(v) := \frac{1}{2} \|v\|_\dhs^2 - \frac{1}{p+1} \int_{\R^N} |v|^{p+1} \diff x, 
 \end{equation}
which will play an important role in the proof of Theorem \ref{theorem exponential decay}.

Indeed, the main technical part of the proof of Theorem \ref{theorem exponential decay}, namely Lemma \ref{lemma hard ineq} below, relies on a Taylor expansion to second order of the functional $J(w(\tau))$ 
for $w(\tau)$ in the limit point $U[z, \lambda]$ (or in fact in a slight perturbation $U(\tau)$ of $U[z, \lambda]$). This is why the operator
\begin{equation}
    \label{L definition}
  \mathcal L_{U[z, \lambda]} =  D^2 J(U[z, \lambda]) = (-\Delta)^s - p U[z, \lambda]^{p-1}
\end{equation}
plays a central role in what follows.  

Here is the crucial information we will need about the eigenvalue problem 
\begin{equation}
    \label{L eigenvalue problem}
    \mathcal L_{U[z, \lambda]} e_j = \nu_j \Uzl^{p-1} e_j. 
\end{equation}

\begin{proposition}
    \label{proposition linearized operator}
    Let $\mathcal L_\Uzl$ be defined by \eqref{L definition}. Then there is an increasing sequence of eigenvalues $\nu_j \to \infty$ (counted with multiplicity) independent of $z, \lambda$ and and an associated sequence of eigenfunctions $(e_j)_{j \in \N_0} = (e_j[z, \lambda])_{j \in \N_0}$ such that \eqref{L eigenvalue problem} is satisfied for every $j$. Moreover, the following holds: 
    \begin{enumerate}[(i)]
        \item The $(e_j)_{j \in \N_0}$ form an orthonormal basis of $\dot{H}^s(\R^N)$ and an orthogonal basis of $L^2(\R^N, U[z, \lambda]^{p-1} \, \diff{x} )$.   
        \item We have $\nu_0 = 1 - p <0$, $\nu_1 = ... = \nu_{N+1} = 0$ and $\nu_{N+2} = p \frac{4s}{N-2s +2} > 0$. Moreover, 
        \begin{equation}
            \label{nondeg}
             \ker \mathcal L_\Uzl = \text{span} \left \{ \pzi \Uzl, \pl \Uzl \, : \, i = 1,...,N \right \}. 
        \end{equation} 
    \end{enumerate}
\end{proposition}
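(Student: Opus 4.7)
The plan is to reduce the weighted eigenvalue problem \eqref{L eigenvalue problem} to the classical spectral problem for the conformal fractional Laplacian $\Ps$ on $\Sph$ via stereographic projection, where the eigenvalues are explicitly known in terms of Gamma functions. First, since the equation $\Ds U = U^p$ and the family $\Uzl$ are covariant under translations and dilations, and these symmetries are isometries of $\dhs$ that conjugate $\mathcal L_\Uzl$ to $\mathcal L_{U[0,1]}$, the spectrum is independent of $(z,\lambda)$ and it suffices to work at $z = 0$, $\lambda = 1$.

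Let $\Ste: \Sph \setminus \{\text{pole}\} \to \R^N$ denote stereographic projection. Writing $e_j(x) = \left(\tfrac{2}{1+|x|^2}\right)^{(N-2s)/2} \tilde e_j (\Ste^{-1}(x))$ and using the standard conformal covariance identity
\[ \Ds \left(\left(\tfrac{2}{1+|x|^2}\right)^{(N-2s)/2} v \circ \Ste^{-1}\right)(x) = \left(\tfrac{2}{1+|x|^2}\right)^{(N+2s)/2} (\Ps v)(\Ste^{-1}(x)), \]
one sees that $U[0,1]$ pulls back to a constant on $\Sph$, so that \eqref{U equation} combined with $\Ps 1 = \lambda_0 := \Gamma(N/2+s)/\Gamma(N/2-s)$ forces $U[0,1]^{p-1}(x) = \lambda_0 \left(\tfrac{2}{1+|x|^2}\right)^{2s}$. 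Plugging these into \eqref{L eigenvalue problem} and using the critical-exponent identity $(p-1)(N-2s)/2 = 2s$, the conformal weights on both sides match exactly and the problem collapses to
\[ \Ps \tilde e_j = (p + \nu_j)\, \lambda_0\, \tilde e_j \qquad \text{on } \Sph. \]

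The spectrum of $\Ps$ on $\Sph$ is classical: it acts as multiplication by $\lambda_\ell := \Gamma(\ell + N/2+s)/\Gamma(\ell + N/2-s)$ on the space of spherical harmonics of degree $\ell \in \N_0$, of dimension $1$, $N+1$, $N(N+3)/2$, \ldots\ for $\ell = 0, 1, 2, \ldots$ Hence $\nu_j = \lambda_\ell/\lambda_0 - p$ with the matching multiplicities, and a short Gamma computation yields $\nu_0 = 1 - p$ (from $\ell = 0$), $\nu_1 = \dots = \nu_{N+1} = 0$ (from $\ell = 1$, since $\lambda_1/\lambda_0 = (N+2s)/(N-2s) = p$), and $\nu_{N+2} = p \cdot 4s/(N-2s+2)$ (from $\ell = 2$), giving (ii) apart from the kernel description. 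Transporting the standard $L^2(\Sph)$ basis of spherical harmonics back to $\R^N$ produces the basis in (i): orthogonality in $L^2(\R^N, U[0,1]^{p-1} \diff x)$ reduces via the change of variable $U[0,1]^{p-1} \diff x = \lambda_0 (2/(1+|x|^2))^{2s} \diff x$ and the formula for the volume on $\Sph$ to the standard orthogonality of spherical harmonics, and orthogonality in $\dhs$ follows by testing \eqref{L eigenvalue problem} against $e_k$, $k \neq j$, and invoking the weighted orthogonality.

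Finally, for \eqref{nondeg}: differentiating $\Ds \Uzl = \Uzl^p$ in $z_i$ and in $\lambda$ produces $N+1$ elements of $\ker \mathcal L_\Uzl$; their stereographic pullbacks are (up to non-zero constants) the $N+1$ linearly independent first-degree spherical harmonics on $\Sph$, so they are linearly independent and, by the multiplicity count above, span the kernel. \emph{Main obstacle.} The most delicate point is the algebraic bookkeeping of conformal factors that makes the weighted eigenvalue problem on $\R^N$ collapse to an unweighted eigenvalue problem on $\Sph$; the identity $(p-1)(N-2s)/2 = 2s$, which is a manifestation of the fact that $p$ is the critical Sobolev exponent, is the crucial ingredient. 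Once this is in place, the rest is a direct application of the classical spectrum of $\Ps$.
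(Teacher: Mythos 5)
Your proposal is correct and follows essentially the same route as the paper: both reduce the weighted eigenvalue problem to the spectrum of the conformal fractional operator on $\Sph$ via stereographic projection, read off the eigenvalues $\alpha(\ell)/\alpha(0)-p$ from the Gamma-function formula, and identify the kernel with the pullbacks of the degree-one spherical harmonics, i.e.\ with $\pzi \Uzl$ and $\pl \Uzl$. The Gamma computations and multiplicity counts check out.
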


These properties are well known, but we provide references and some elements of proof nevertheless. 

The property \eqref{nondeg} is usually referred to as the \emph{non-degeneracy} of $\Uzl$ as a solution to \eqref{U equation}. It was first proved in 
\cite{Davila2013}. Differently from the setting on a bounded domain studied in \cite{Bonforte2021, Akagi2023}, here the notion of non-degeneracy allows for the kernel of $\mathcal L_\Uzl$ to be non-trivial. However, the crucial property is that the kernel consists only of elements which arise necessarily from the symmetries of equation \eqref{U equation}.

\begin{proof}
It suffices to prove the proposition for $U:= U[0, 1]$ because the general case follows from rescaling.  
For convenience let us introduce $B(x) = \left( \frac{2}{1 + |x|^2} \right)^{\frac{N-2s}{2}}$, which is just a differently normalized version of the bubble $U$, namely 
\begin{equation}
    \label{B to U}
    B = \alpha(0)^\frac{1}{1-p} U
\end{equation} 
for the number $\alpha(0)$ defined in \eqref{alpha ell} below. Define the auxiliary operator 
\[ \mathcal H_s := \frac{(-\Delta)^s}{B^{p-1}}\]
acting on the weighted space $L^2(\R^N, B^{p-1} \diff x)$. For a function $w \in \dhs$, define $v$ to be the function on $\mathbb S^N$ which satisfies
\begin{equation}
    \label{ster proj}
    w(x) = v(\mathcal S(x)) J_{\mathcal S}(x)^{\frac{1}{p+1}} = v(\mathcal S(x)) B(x),
\end{equation} 
where $\mathcal S: \R^N \to \mathbb S^N$ is (inverse) stereographic projection given by 
\begin{equation}
\label{ster proj definition}
(\mathcal S(x))_i = \frac{2 x_i}{1 + |x|^2} \quad (i = 1,...,N), \qquad (\mathcal S(x))_{N+1} = \frac{1 - |x|^2}{1+|x|^2}, 
\end{equation} 
and $J_{\mathcal S}(x) = |\det D \mathcal S(x)| = \left( \frac{2}{1 + |x|^2} \right)^N = B(x)^{p+1}$ is its Jacobian determinant. Then we have (see \cite[eq.(25)]{Frank2023}) 
\begin{equation}
    \label{conf trafo Hs norm}
    \lsc w, \mathcal H_s w \rsc_{L^2(\R^N, B^{p-1} \diff x)} =  \lsc w, (-\Delta)^s w \rsc_{L^2(\R^N)} = \lsc v, A_s v \rsc_{L^2(\mathbb S^N)}  
\end{equation} 
where $A_s$ is the operator given on spherical harmonics $Y_\ell$ of degree $\ell \geq 0$ of $\mathbb S^N$ as 
\[ A_s Y_\ell = \alpha(\ell) Y_\ell, \]
with
\begin{equation}
    \label{alpha ell}
    \alpha(\ell) = \frac{\Gamma(\ell + \frac{N}{2} + s)}{\Gamma(\ell + \frac{N}{2} - s)}. 
\end{equation}
Moreover, we directly see by change of variables that 
\begin{equation}
    \label{conf trafo L^2 norm}
    \lsc w, w \rsc_{L^2(\R^N, B^{p-1} \diff x)} = \int_{\R^N} w^2 B^{p-1} \diff x = \lsc v, v \rsc_{L^2(\mathbb S^N)}.  
\end{equation}
Thus $\mathcal H_s$ acting on $L^2(\R^N, B^{p-1} \diff x)$ is unitarily equivalent to $A_s$ via the map \eqref{ster proj}. In particular, the eigenvalues of $\mathcal H_s$ are precisely the $\alpha(\ell)$, and $\mathcal H_s \Tilde{e}_\ell = \alpha(\ell) \Tilde{e}_\ell$ for some $\Tilde{e}_\ell$ if and only if $\Tilde{e}_\ell$ is related to some spherical harmonic $Y_\ell$ of degree $\ell$ via \eqref{ster proj}. 

Recalling \eqref{B to U}, it follows that 
\begin{equation}
    \label{L eigenvalues proof}
    \mathcal L_U \Tilde{e}_\ell  = \left((-\Delta)^s - pU^{p-1} \right) \Tilde{e}_\ell = \left( \frac{\alpha(\ell)}{\alpha(0)} - p \right) U^{p-1} \Tilde{e}_\ell =: \Tilde{\nu}_\ell U^{p-1} \Tilde{e}_\ell.  
\end{equation} 
Now all the statements of the proposition can be read off by choosing an enumeration $\nu_j$ of the $\Tilde{\nu}_\ell$ which takes into account their respective multiplicities and corresponding eigenfunctions $(e_j)$ related via \eqref{ster proj} to pairwise orthogonal spherical harmonics. By \eqref{conf trafo Hs norm} and \eqref{conf trafo L^2 norm}, the $(e_j)$ are pairwise orthogonal in $\dhs$ and $L^2(\R^N, U^{p-1} \diff x)$ and we can normalize them to have unit $\dhs$ norm. This proves (i). 

For (ii), we simply notice that the space of spherical harmonics of degree $\ell = 0$ is one-dimensional and spanned by the constant function $1$, which gives via \eqref{ster proj} $e_0 = c U$ for some constant $c > 0$. Similarly, the space of spherical harmonics of degree $\ell = 1$ is spanned by the coordinate functions $\omega_i$ for $i=1,..., N+1$. A straightforward computation using \eqref{ster proj} now gives that, up to a positive constant, $e_i = \partial_{z_i} U$ and $e_{N+1} = \partial_\lambda U$ as claimed. The claimed values of $\nu_0$, $\nu_1$ and $\nu_{N+2}$ can be read off directly from \eqref{L eigenvalues proof} and \eqref{alpha ell}. This completes the proof. 
\end{proof}

\begin{remark}[Sharp exponential decay rate in terms of the eigenvalues of $\mathcal L_{U[z, \lambda]}$]
    \label{remark sharpness}
    We can now explain why we call the number $-\frac{8s}{N - 2s + 2}$ in Theorem \ref{theorem exponential decay} the sharp decay exponent and how this number is related to the eigenvalues of $\mathcal L_{U[z, \lambda]}$, more precisely to the smallest positive one $\mu_{N+2} = p \frac{4s}{N-2s +2}$. 

    Recall that ${U[z, \lambda]}$ is a stationary solution to \eqref{w equation}.     The linearization of \eqref{w equation} about $U:= U[z,\lambda]$ is 
\[ p U^{p-1} \partial_\tau \rho + (-\Delta)^s \rho = p U^{p-1} \rho \]
which is equivalent to 
\begin{equation}
    \label{h-equation}
    \partial_\tau \rho + (\mathsf L - 1)\rho = 0 
\end{equation} 
with $\mathsf L := \frac{(-\Delta)^s}{pU^{p-1}}$. A solution $w(\tau)$ to \eqref{w equation} converging to $U$ should thus asymptotically behave like $w \sim U + \rho$, where $\rho$ solves \eqref{h-equation} and tends to zero as $\tau \to \infty$.  

It is easily checked that the eigenvalues of $\mathsf L-1$ are $\kappa_j := \frac{\nu_j}{p}$ with associated eigenfunction $e_j$, for $\nu_j$ and $e_j$ as in Proposition \ref{proposition linearized operator}. Since \eqref{h-equation} is linear, its solutions can be explicitly expressed as 
\[ \rho(\tau, x) = \sum_{j \in \N_0} c_j e^{-\kappa_j \tau} e_j(x), \qquad c_j \in \R.  \]
Since in our situation we are interested in $\rho$ vanishing as $\tau \to \infty$, we consider $c_0 =...= c_{N+1} = 0$. The slowest possible exponential decay rate is given by $h$ of the form 
\[ \rho_0(\tau, x) = e^{-\kappa_{N+2} \tau} e_{N+2}(x). \]
Inserting $w(\tau) = U + \rho_0(\tau)$ into the functional $J$ and Taylor expanding as in Lemma \ref{Taylor_for} below,  we get 
\[ J(w(\tau)) - J(U) \sim \frac{1}{2} \lsc \rho_0(\tau), \mathcal L_U \rho_0(\tau) \rsc_{L^2(\R^N)} \sim e^{-2 \kappa_{N+2} \tau}.  \]
as the slowest possible decay rate. 
But $2 \kappa_{N+2} = 2 \frac{\nu_{N+2}}{p} = \frac{8s}{N-2s +2}$. In view of Lemma \ref{J-ctrl-rho}, the exponent given in Theorem \ref{theorem exponential decay} is the best possible one. 
\end{remark}

\section{Proof of Theorem \ref{theorem exponential decay}}
\label{section proof of theorem exp decay RN}

We let $w(\tau, x)$ be a solution to \eqref{w equation} and write $w(\tau) = w(\tau, \cdot)$. Recalling \eqref{eq:asy} and \eqref{eq:asy w} and replacing $w(\tau, x)$ by $\lambda^\frac{N-2s}{2}
 w(\tau, \lambda(x - z))$ for appropriate $z \in \R^N$ and $\lambda >0$ we will always assume in the following that 
\begin{equation}
\label{w/U to 0}
 \lim_{\tau \to \infty} \left\| \frac{w(\tau)}{U} - 1 \right\|_{L^\infty(\R^N)} = \lim_{t \to T_*^-} \left\| \left(\frac{u(t,x)}{U_{T_*, 0,1}(t,x)}\right)^{1/p} - 1 \right \|_{L^\infty(\R^N)} = 0,
\end{equation}
where $U := U[0,1]$.

By \cite[Theorem 5.1]{Jin2014}, respectively \cite[Proof of Proposition 5.1, Step 2]{MR1857048}, $w$ is strictly positive and smooth on $\R^N \times (0, \infty)$. This avoids in particular the need to discuss regularity issues.

The function $U(\tau)$ constructed in the following lemma will play a key role in resolving the difficulty that $\mathcal L_U$ has eigenvalue zero. 

\begin{lemma}
\label{lemma U(tau)}
  Let 
    \[ \mathcal M := \left\{ U[z, \lambda] \, : \, z \in \R^N, \, \lambda > 0 \right \} \, \subset \dhs. \]
    Then for $\tau$ large enough, there exists a unique function $U(\tau) = U[z(\tau), \lambda(\tau)]$  which satisfies 
\begin{equation}
    \label{U tau minimal condition}
     \|w(\tau) - U(\tau)\|_{\dhs} = \inf_{V \in \mathcal M} \|w(\tau) - V\|_\dhs. 
\end{equation}
\end{lemma}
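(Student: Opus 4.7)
The plan is to view $\mathcal M$ as a smooth $(N+1)$-dimensional submanifold of $\dhs$, combine a concentration-compactness-type coercivity argument with the scale invariance of the $\dhs$-norm to obtain existence of a minimizer, and invoke the implicit function theorem together with the non-degeneracy statement \eqref{nondeg} to obtain uniqueness. The map $\Phi : \R^N \times (0, \infty) \to \dhs$, $\Phi(z, \lambda) := \Uzl$, is smooth, and its differential has rank $N+1$ at every $(z, \lambda)$: by Proposition \ref{proposition linearized operator} the vectors $\pzi \Uzl$ ($i = 1, \dots, N$) and $\pl \Uzl$ form an orthogonal family of nonzero eigenfunctions of $\mathcal L_\Uzl$ in $\dhs$ and are hence linearly independent. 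I work throughout with the smooth function $F(z, \lambda) := \|w(\tau) - \Uzl\|_\dhs^2$.

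For existence, I would show that any minimizing sequence of $F$ remains in a compact subset of $\R^N \times (0, \infty)$. If $(z_n, \lambda_n)$ escapes to the boundary of parameter space (i.e., $|z_n| \to \infty$, $\lambda_n \to 0$, or $\lambda_n \to \infty$), then $U[z_n, \lambda_n] \rightharpoonup 0$ weakly in $\dhs$, while $\|U[z_n, \lambda_n]\|_\dhs = \|U\|_\dhs$ by scale invariance. Hence
\[ \lim_{n \to \infty} F(z_n, \lambda_n) = \|w(\tau)\|_\dhs^2 + \|U\|_\dhs^2. \]
On the other hand, using $(-\Delta)^s U = U^p$,
\[ F(0, 1) = \|w(\tau)\|_\dhs^2 + \|U\|_\dhs^2 - 2 \int_{\R^N} w(\tau)\, U^p \diff x < \|w(\tau)\|_\dhs^2 + \|U\|_\dhs^2, \]
since $w(\tau), U > 0$. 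The infimum is therefore attained in the interior, and existence follows from continuity of $\Phi$.

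For uniqueness I would apply the implicit function theorem to the critical-point equations $G(z, \lambda) = 0$, where $G : \R^N \times (0, \infty) \to \R^{N+1}$ has components
\[ G_i(z, \lambda) = \lsc w(\tau) - \Uzl, \pzi \Uzl \rsc_\dhs, \qquad G_{N+1}(z, \lambda) = \lsc w(\tau) - \Uzl, \pl \Uzl \rsc_\dhs. \]
At $(z, \lambda) = (0, 1)$ with $w(\tau)$ replaced by $U := U[0, 1]$, $G$ vanishes, and its Jacobian is the negative Gram matrix of $\{\pzi U, \pl U\}$, which is invertible (indeed strictly negative definite) by the linear independence noted above. The IFT therefore produces, for $w(\tau)$ sufficiently close to $U$, a unique $(z(\tau), \lambda(\tau))$ near $(0, 1)$ solving $G(z(\tau), \lambda(\tau)) = 0$; together with the compactness from the previous step, this singles out the global minimizer.

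The main obstacle is to bridge the $L^\infty$-relative-error convergence \eqref{w/U to 0} and the smallness in the relevant $\dhs$-pairings required for the IFT to apply uniformly in $\tau$. I would do so by noting that $|w(\tau) - U| \leq \epsilon(\tau) U$ with $\epsilon(\tau) \to 0$ implies $\|w(\tau) - U\|_{L^{2N/(N-2s)}(\R^N)} \to 0$; since $(-\Delta)^s \pzi U = p U^{p-1} \pzi U$ and $(-\Delta)^s \pl U = p U^{p-1} \pl U$ both lie in $L^{2N/(N+2s)}(\R^N)$ by their polynomial decay at infinity, a duality pairing yields $\lsc w(\tau) - U, \pzi U \rsc_\dhs, \lsc w(\tau) - U, \pl U \rsc_\dhs \to 0$ as $\tau \to \infty$, which is precisely the smallness required for the IFT.
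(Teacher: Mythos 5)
Your argument follows the standard route that the paper itself only cites (existence as in \cite{DeNitti2023}, uniqueness via non-degeneracy and the implicit function theorem as in Bahri--Coron and \cite{Frank2023}), and most of it is sound: the coercivity step (degenerating parameters force $F(z_n,\lambda_n)\to \|w(\tau)\|_\dhs^2+\|U\|_\dhs^2$, which is beaten by $F(0,1)$ since $\int w(\tau)U^p\diff x>0$), the computation of the Jacobian of $G$ as minus the Gram matrix of the tangent vectors, and the duality argument converting the $L^\infty$ relative-error convergence into smallness of the pairings $\lsc w(\tau)-U,\pzi U\rsc_\dhs$ are all correct.

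The one step you have not actually closed is the passage from \emph{local} IFT uniqueness to uniqueness of the \emph{global} minimizer. The implicit function theorem gives a unique zero of $G$ in some fixed neighbourhood of $(0,1)$, but compactness of minimizing sequences only places global minimizers in a compact subset of $\R^N\times(0,\infty)$ --- it does not put them inside that neighbourhood, so a priori a second minimizer could sit elsewhere. You need to show that every global minimizer $(z(\tau),\lambda(\tau))$ converges to $(0,1)$ as $\tau\to\infty$. This is available with the tools you already set up: minimizing $F$ is equivalent to maximizing $\Psi_\tau(z,\lambda):=\int_{\R^N} w(\tau)\,U[z,\lambda]^p\diff x$, and your $L^{2N/(N-2s)}$-convergence of $w(\tau)$ to $U$ gives $\Psi_\tau\to\Psi_\infty$ \emph{uniformly} in $(z,\lambda)$ by H\"older against the scale-invariant bound on $\|U[z,\lambda]^p\|_{L^{2N/(N+2s)}}$; moreover $\Psi_\infty(z,\lambda)=\lsc U,U[z,\lambda]\rsc_\dhs$ has a strict global maximum at $(0,1)$ (equality case of Cauchy--Schwarz, both functions having equal $\dhs$-norm) and tends to $0$ as the parameters degenerate, so its supremum outside any neighbourhood of $(0,1)$ is strictly below $\Psi_\infty(0,1)$. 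Hence maximizers of $\Psi_\tau$ must enter the IFT neighbourhood for $\tau$ large. (Alternatively, one can invoke $w(\tau)\to U$ in $\dhs$, which the paper takes from \cite{DeNitti2023}, combine $\|U(\tau)-U\|_\dhs\le 2\|w(\tau)-U\|_\dhs$ with Lemma \ref{lemma U z lambda}(i), and conclude the same localization.) With this addition your proof is complete.
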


\begin{proof}
    For the existence, see \cite[Lemma 3.4]{DeNitti2023}. The claimed uniqueness follows by the argument detailed in \cite[Proof of Lemma 12(d)]{Frank2023}. (For $s=1$, this lemma is due to \cite[Proposition 7]{Bahri1988}.)
\end{proof}

Here is another useful preliminary observation. 

\begin{lemma}
\label{J decreasing}
Let $w$ be a solution to \eqref{w equation}, let $J$ be given by \eqref{J definition} and let $p = \frac{N+2s}{N-2s}$. Then for every $\tau \in (0, \infty)$, we have
\[ \frac{\diff}{\diff  \tau} J(w(\tau)) = - \frac{4 p}{(p+1)^2} \int_{\R^N} |\partial_\tau (w(\tau)^\frac{p+1}{2})|^2 \, \diff{x} . \]
In particular, $J(w(\tau))$ is decreasing in $\tau \in (0, \infty)$. 
\end{lemma}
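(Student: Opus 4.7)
The proof will be a standard energy identity computation based on testing \eqref{w equation} against $\partial_\tau w$ and invoking the chain rule. Concretely, I would differentiate $J(w(\tau))$ in $\tau$ and, after integration by parts in the Dirichlet term, rewrite the result as
\[ \frac{\diff}{\diff \tau} J(w(\tau)) = \int_{\R^N} \bigl[ (-\Delta)^s w(\tau) - w(\tau)^p \bigr] \, \partial_\tau w(\tau) \, \diff x, \]
using that $w \geq 0$ so that $|w|^{p+1} = w^{p+1}$. I would then invoke the PDE \eqref{w equation} in the form $(-\Delta)^s w - w^p = - \partial_\tau (w^p)$ and apply the chain rule $\partial_\tau(w^p) = p w^{p-1} \partial_\tau w$ to deduce
\[ \frac{\diff}{\diff \tau} J(w(\tau)) = - p \int_{\R^N} w^{p-1} (\partial_\tau w)^2 \, \diff x. \]
Finally, the algebraic identity $\partial_\tau(w^{(p+1)/2}) = \frac{p+1}{2} w^{(p-1)/2} \partial_\tau w$ (valid wherever $w > 0$) rewrites $w^{p-1}(\partial_\tau w)^2 = \frac{4}{(p+1)^2} (\partial_\tau w^{(p+1)/2})^2$, which yields the claimed identity. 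The nonpositivity of the integrand then immediately gives that $\tau \mapsto J(w(\tau))$ is nonincreasing.

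The only nontrivial step is justifying the formal manipulations, in particular differentiation under the integral sign, the integration by parts $\langle (-\Delta)^{s/2} w, (-\Delta)^{s/2} \partial_\tau w \rangle_{L^2} = \langle (-\Delta)^s w, \partial_\tau w \rangle_{L^2}$, and the chain rule at points where $w$ could potentially vanish. However, the paragraph just above the lemma statement recalls that, under the standing hypotheses \eqref{condition delPino-Saez} or \eqref{condition jin-xiong}, the solution $w$ is smooth and strictly positive on $\R^N \times (0, \infty)$ by \cite[Theorem 5.1]{Jin2014} (respectively \cite[Proof of Proposition 5.1, Step 2]{MR1857048}), so the chain rule is unproblematic and $w^{p-1}$ is well defined everywhere.

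Integrability and decay for the justification of the integration by parts and Lebesgue differentiation follow from the same regularity and smoothing results: $w(\tau, \cdot)$ and $\partial_\tau w(\tau, \cdot)$ inherit pointwise decay of order $|x|^{-(N-2s)}$ (matching that of $U[z, \lambda]$) uniformly on time intervals bounded away from $0$ and $\infty$, so that $w(\tau), \partial_\tau w(\tau) \in \dhs$, and $w^{p-1}(\partial_\tau w)^2 \in L^1(\R^N)$. Given the existing regularity and decay available to us from the cited literature, I do not anticipate this step to be a serious obstacle and would treat it briefly; the heart of the lemma is the algebraic identity above.
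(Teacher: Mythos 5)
Your computation is exactly the paper's: test the equation against $\partial_\tau w$ (equivalently, differentiate $J$ along the flow), use $(-\Delta)^s w - w^p = -\partial_\tau(w^p) = -p\,w^{p-1}\partial_\tau w$ to get $\frac{\diff}{\diff\tau}J(w(\tau)) = -p\int w^{p-1}(\partial_\tau w)^2$, and then rewrite via $\partial_\tau(w^{(p+1)/2})$. The regularity and positivity justifications you invoke are the same ones the paper records just before the lemma, so the proposal is correct and essentially identical to the paper's proof.
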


\begin{proof}
    Testing \eqref{w equation} by $ \partial_\tau w $ and using \eqref{w equation}, we get
    \begin{align*}
    p \int_{\R^N} w^{p-1} ( \partial_\tau w )^2 \diff x &= -\int_{\R^N} ( \partial_\tau w ) (-\Delta)^s w \, \diff{x}   + \int_{\R^N} ( \partial_\tau w ) w^p \, \diff{x} \\
    &= - \int_{\R^N} [(-\Delta)^{s/2} w] \, [(-\Delta)^{s/2} ( \partial_\tau w )] \, \diff{x} + \int_{\R^N} (\partial_\tau w) w^p \, \diff{x} \\
    &= - \langle J'(w), \partial_\tau w \rangle_* = - \frac{ \mathrm{d} }{ \mathrm{d}\tau } J(w(\tau)).
    \end{align*}
    Here, $\langle \cdot, \cdot \rangle_*$ denotes the dual pairing between $\dhs$ and its dual space $\dot H^{-s}(\R^N)$.
    Since
    $$
    \int_{\R^N} \left( \partial_\tau w^\frac{p+1}{2}  \right)^2 \, \diff{x} = \int_{\R^N} \left( \frac{p+1}{2} w^\frac{p-1}{2} \partial_\tau w \right)^2 \, \diff{x} = \frac{(p+1)^2}{4} \int_{\R^N} w^{p-1} ( \partial_\tau w )^2 \, \diff{x} ,
    $$
    the lemma follows. 
\end{proof}

Our strategy now consists in proving the following two inequalities involving the quantity $  \|J'(w(\tau))\|_{L^2(\R^N, U^{1-p} \, \diff{x})}^2$. 

\begin{lemma}
\label{lemma easy ineq}
Let $w$ be a solution to \eqref{w equation}, let $J$ be defined by \eqref{J definition} and let $p = \frac{N+2s}{N-2s}$. 
Then for every $\tau$, one has
\begin{equation}
    \label{easy ineq lemma}
    \|J'(w(\tau))\|_{L^2(\R^N, U^{1-p} \, \diff{x})}^2 \leq - p \left(1 + \left\|\frac{w(\tau)}{U}- 1\right\|_{L^\infty(\R^N)}\right)^{p-1} \frac{\diff}{\diff \tau} J(w (\tau)). 
\end{equation}  
\end{lemma}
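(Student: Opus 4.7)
The plan is to derive an explicit pointwise formula for $J'(w(\tau))$ from the evolution equation \eqref{w equation}, substitute it into the weighted $L^2$ norm on the left-hand side of \eqref{easy ineq lemma}, and then compare against the expression for $-\frac{d}{d\tau} J(w(\tau))$ provided by Lemma \ref{J decreasing}. The remaining inequality will reduce to a trivial pointwise bound on $w/U$.

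First I would compute $J'(w) = (-\Delta)^s w - w^p$ (since $w > 0$) and observe that the equation \eqref{w equation} can be rewritten as $p w^{p-1} \partial_\tau w = w^p - (-\Delta)^s w$, so that
\begin{equation*}
J'(w(\tau)) = -\, p \, w(\tau)^{p-1} \partial_\tau w(\tau).
\end{equation*}
Inserting this identity into the weighted $L^2$-norm gives
\begin{equation*}
\|J'(w(\tau))\|_{L^2(\R^N, U^{1-p} \diff x)}^2 = p^2 \int_{\R^N} w(\tau)^{2(p-1)} (\partial_\tau w(\tau))^2 \, U^{1-p} \diff x.
\end{equation*}

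On the other hand, the proof of Lemma \ref{J decreasing} shows that
\begin{equation*}
-\frac{d}{d\tau} J(w(\tau)) = p \int_{\R^N} w(\tau)^{p-1} (\partial_\tau w(\tau))^2 \diff x,
\end{equation*}
so the desired inequality \eqref{easy ineq lemma} is equivalent to
\begin{equation*}
\int_{\R^N} \left(\frac{w(\tau)}{U}\right)^{p-1} w(\tau)^{p-1} (\partial_\tau w(\tau))^2 \diff x \;\leq\; \left(1 + \left\|\frac{w(\tau)}{U} - 1\right\|_{L^\infty(\R^N)}\right)^{\!p-1} \int_{\R^N} w(\tau)^{p-1} (\partial_\tau w(\tau))^2 \diff x.
\end{equation*}

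Finally, I would observe that pointwise a.e.\ on $\R^N$,
\begin{equation*}
\frac{w(\tau,x)}{U(x)} \;\leq\; 1 + \left|\frac{w(\tau,x)}{U(x)} - 1\right| \;\leq\; 1 + \left\|\frac{w(\tau)}{U} - 1\right\|_{L^\infty(\R^N)},
\end{equation*}
so since $p-1 > 0$, raising to the power $p-1$ preserves the inequality and yields the required pointwise bound on $(w/U)^{p-1}$, which integrates to the displayed inequality. There is no genuine obstacle in this lemma: the only slightly delicate point is making sure that $w(\tau)$ is strictly positive and smooth enough to justify multiplying the PDE by $(pw^{p-1})^{-1}$ and interpreting $J'(w(\tau))$ pointwise; this is guaranteed by the strict positivity and smoothness of $w$ already invoked at the start of this section via \cite{MR1857048, Jin2014}.
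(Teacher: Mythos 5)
Your proof is correct and follows essentially the same route as the paper: both identify $J'(w(\tau)) = -\partial_\tau(w(\tau)^p)$ from the equation, rewrite the weighted norm so that the factor $(w/U)^{p-1}$ appears, bound it pointwise by $(1+\|w/U-1\|_{L^\infty})^{p-1}$, and invoke Lemma \ref{J decreasing}. The only difference is cosmetic (you work with $w^{p-1}(\partial_\tau w)^2$ directly rather than $(\partial_\tau w^{\frac{p+1}{2}})^2$), and your closing remark on positivity and smoothness of $w$ matches the justification given at the start of Section \ref{section proof of theorem exp decay RN}.
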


\begin{proof}
By \eqref{J definition} and \eqref{w equation}, we have 
\[ J'(w(\tau)) = (-\Delta)^s w(\tau) - w(\tau)^p = - \partial_\tau (w(\tau)^p). \]
   Hence
    \begin{align*}
      \| J^{'} ( w(\tau) ) \|^2_{ L^2( \R^N , U^{1-p} \, \diff{x} ) } &= \int_{\R^N} ( \partial_\tau w^p (\tau) )^2 U^{1-p} \, \diff{x} \\
      &= \frac{ 4 p^2 }{( p + 1 )^2 } \int_{\R^N}  ( \partial_\tau w^\frac{p+1}{2} (\tau) )^2 \left( \frac{w}{U} \right)^{p-1} \, \diff{x} \\
      &\leq \frac{4p^2}{(p+1)^2} \left(1 + \left\|\frac{w(\tau)}{U}- 1\right\|_{L^\infty(\R^N)}\right)^{p-1}  \int_{\R^N}  ( \partial_\tau w^\frac{p+1}{2} (\tau) )^2 \diff x \\
      &= - p \left(1 + \left\|\frac{w(\tau)}{U}- 1\right\|_{L^\infty(\R^N)}\right)^{p-1}  \frac{\diff }{\diff \tau } J(w(\tau)),
    \end{align*}
    where the last equality follows from Lemma \ref{J decreasing}. 
\end{proof}

\begin{lemma}
\label{lemma hard ineq}
Let $w$ be a solution to \eqref{w equation}, let $J$ be given by \eqref{J definition} and let $p = \frac{N+2s}{N-2s}$. Let $U(\tau)$ be as in Lemma \ref{lemma U(tau)}. Then for every $\tau$ large enough, one has 
\begin{equation}
    \label{hard ineq lemma} 
    J(w(\tau)) - J(U) \leq  \left( \frac{1}{2 \nu_{N+2}} + \alpha(\tau) \right) \|J'(w(\tau))\|_{L^2(\R^N, U^{1-p} \, \diff{x})}^2.
\end{equation}
where 
\begin{equation}
    \label{alpha lemma hard}
    \alpha(\tau) = \mathcal O\left(\|w(\tau) - U\|_{\dhs}^\gamma + \left\|1 - \left(\frac{U}{U(\tau)}\right)^{p-1}\right\|_{L^\infty(\R^N)}\right). 
\end{equation}
Here, $\nu_{N+2} = p \frac{4s}{N-2s+2}$ is the eigenvalue from Proposition \ref{proposition linearized operator} and $\gamma \in (0,1]$ is the exponent from Lemma \ref{Taylor_for}.
\end{lemma}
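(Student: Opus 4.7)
The central idea is to Taylor-expand $J$ about the moving best approximation $U(\tau)$ rather than $U$ itself: this way, the orthogonality condition from Lemma~\ref{lemma U(tau)} kills exactly the kernel of $\mathcal{L}_{U(\tau)}$, and a clean spectral gap estimate becomes available. Since all bubbles in $\mathcal{M}$ are conformally equivalent and share the same value of $J$, one has $J(U) = J(U(\tau))$, so it suffices to estimate $J(w(\tau)) - J(U(\tau))$. Writing $\rho(\tau) := w(\tau) - U(\tau)$ and invoking Lemma~\ref{Taylor_for} together with $J'(U(\tau)) = 0$, we get
\begin{align*}
J(w(\tau)) - J(U(\tau)) &= \tfrac{1}{2} \langle \rho, \mathcal{L}_{U(\tau)} \rho \rangle_{L^2(\R^N)} + \mathcal{O}(\|\rho\|_{\dhs}^{2+\gamma}), \\
J'(w(\tau)) &= \mathcal{L}_{U(\tau)} \rho + \mathcal{O}(\|\rho\|_{\dhs}^{1+\gamma}),
\end{align*}
and by the minimality $\|\rho(\tau)\|_{\dhs} \leq \|w(\tau) - U\|_{\dhs}$.

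Next, expand $\rho = \sum_{j \geq 0} c_j\, e_j[z(\tau),\lambda(\tau)]$ in the $\dhs$-orthonormal basis of Proposition~\ref{proposition linearized operator} attached to $U(\tau)$. Condition \eqref{U tau minimal condition} is equivalent to $\rho \perp_{\dhs} T_{U(\tau)} \mathcal{M}$, and by \eqref{nondeg} the tangent space equals $\ker \mathcal{L}_{U(\tau)} = \mathrm{span}\{e_1,\ldots,e_{N+1}\}$, so $c_1 = \cdots = c_{N+1} = 0$. Using $(-\Delta)^s e_k = (\nu_k + p)\, U(\tau)^{p-1} e_k$ together with $\langle e_j, e_k \rangle_{\dhs} = \delta_{jk}$, one gets $\int U(\tau)^{p-1} e_j e_k \,\diff x = \delta_{jk}/(\nu_j + p)$, hence
\[
\langle \rho, \mathcal{L}_{U(\tau)} \rho \rangle_{L^2} = \sum_{j} \frac{\nu_j\, c_j^2}{\nu_j + p}, \qquad \|\mathcal{L}_{U(\tau)} \rho\|^2_{L^2(U(\tau)^{1-p}\,\diff x)} = \sum_{j} \frac{\nu_j^2\, c_j^2}{\nu_j + p}.
\]
Since $\nu_0 = 1-p < 0$, the $j=0$ term of the first sum is non-positive and can be discarded; for $j \geq N+2$ the spectral gap $\nu_j \geq \nu_{N+2}$ gives $\nu_j/(\nu_j+p) \leq \nu_j^2/\bigl((\nu_j+p)\,\nu_{N+2}\bigr)$. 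Combining,
\[
\tfrac{1}{2} \langle \rho, \mathcal{L}_{U(\tau)} \rho \rangle_{L^2} \;\leq\; \tfrac{1}{2\nu_{N+2}}\, \|\mathcal{L}_{U(\tau)} \rho\|^2_{L^2(U(\tau)^{1-p}\,\diff x)}.
\]

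To match the norm appearing in \eqref{hard ineq lemma}, switch the weight $U(\tau)^{1-p} \leadsto U^{1-p}$, which introduces a multiplicative factor $1 + \mathcal{O}(\|1 - (U/U(\tau))^{p-1}\|_{L^\infty})$, and replace $\mathcal{L}_{U(\tau)} \rho$ by $J'(w(\tau))$ via the second Taylor identity, at the further cost of $\mathcal{O}(\|\rho\|_{\dhs}^\gamma)$. Packaging both errors into $\alpha(\tau)$ yields \eqref{hard ineq lemma}. The main obstacle is the non-trivial kernel of $\mathcal{L}_U$: decomposing $\rho$ against the \emph{fixed} eigenbasis of $\mathcal{L}_U$ would leave kernel contributions uncontrolled by either side, but anchoring the expansion at the moving bubble $U(\tau)$ lets the minimality orthogonality eliminate them cleanly, at the mild cost of the weight mismatch that manifests as the second term in $\alpha(\tau)$.
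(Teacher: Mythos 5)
Your overall architecture is exactly the paper's: Taylor expansion at the moving minimizer $U(\tau)$, using $J(U(\tau))=J(U)$, killing the kernel modes $e_1,\dots,e_{N+1}$ via the minimality orthogonality, and exploiting the spectral gap above $\nu_{N+2}$ together with the negativity of the $\nu_0$-mode. The diagonal identities $\langle \rho,\mathcal L_{U(\tau)}\rho\rangle=\sum_j \nu_j c_j^2/\mu_j$ and $\|\mathcal L_{U(\tau)}\rho\|^2_{L^2(U(\tau)^{1-p})}=\sum_j\nu_j^2c_j^2/\mu_j$ are correct, and the resulting inequality $\tfrac12\langle\rho,\mathcal L_{U(\tau)}\rho\rangle\le \tfrac{1}{2\nu_{N+2}}\|\mathcal L_{U(\tau)}\rho\|^2_{L^2(U(\tau)^{1-p})}$ is valid.

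The gap is in the final step, where you ``replace $\mathcal L_{U(\tau)}\rho$ by $J'(w(\tau))$ via the second Taylor identity, at the further cost of $\mathcal O(\|\rho\|_\dhs^\gamma)$.'' The Taylor expansion (Lemma \ref{Taylor_for}) controls the error $e(\tau)=J'(w(\tau))-\mathcal L_{U(\tau)}\rho$ only in $\dot H^{-s}(\R^N)$, whereas your substitution takes place inside the weighted norm $\|\cdot\|_{L^2(\R^N,U^{1-p}\diff x)}$. These norms are not interchangeable in the direction you need: by H\"older and Sobolev one has $\|g\|_{L^2(U^{p-1}\diff x)}\lesssim\|g\|_\dhs$, so by duality $\|f\|_\dhms\lesssim\|f\|_{L^2(U^{1-p}\diff x)}$ --- the weighted $L^2$ norm \emph{dominates} $\dot H^{-s}$, and the converse fails (the weight $U^{1-p}\sim|x|^{4s}$ blows up at infinity). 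Hence $\|e(\tau)\|_{\dhms}=\mathcal O(\|\rho\|_\dhs^{1+\gamma})$ gives no control on $\|e(\tau)\|_{L^2(U^{1-p}\diff x)}$, and the replacement as stated is unjustified. (The same issue resurfaces when you need to absorb the $\mathcal O(\|\rho\|_\dhs^{2+\gamma})$ remainder of the expansion of $J$ into $\alpha(\tau)\|J'(w(\tau))\|^2_{L^2(U^{1-p}\diff x)}$.) The paper's Lemma \ref{lemma rho L rho estimate} is built precisely to sidestep this: rather than estimating $\|\mathcal L_{U(\tau)}\rho\|$ in the weighted norm, it writes $\langle\Tilde\rho,\mathcal L_{U(\tau)}\Tilde\rho\rangle_\ast=\langle\Tilde\rho,J'(w(\tau))\rangle_\ast-\langle\Tilde\rho,e(\tau)\rangle_\ast$, so the Taylor error is paired against $\Tilde\rho\in\dhs$ and only its $\dhms$ norm is needed, while the weighted Cauchy--Schwarz plus spectral gap plus Young is applied to the term $\langle\Tilde\rho,J'(w(\tau))\rangle_\ast$, where $J'(w(\tau))$ already appears. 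To repair your argument you would either have to adopt this duality-based route, or prove a genuinely new pointwise/weighted bound on the Taylor remainder (e.g.\ using the uniform relative-error control $|w-U(\tau)|\lesssim\eps\,U$), which would change the form of $\alpha(\tau)$.
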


Before proving Lemma \ref{lemma hard ineq}, which is the crucial and most involved part, let us show how the two inequalities from Lemmas \ref{lemma easy ineq} and \ref{lemma hard ineq} imply our main result.

\begin{proof}
[Proof of Theorem \ref{theorem exponential decay}]
We proceed in two steps. 

\textit{Step 1.  }
We first show that for every $\mu < \frac{8s}{N-2s+2}$ there is $C > 0$ such that for all large enough $\tau$, 
\begin{equation}
    \label{J convergence subcrit}
    J(w(\tau)) - J(U) \leq C \exp(- \mu \tau).
\end{equation} 
To see this, let $\eps > 0$, whose value will be chosen later. By  \cite[Proof of Theorem 2.8, Steps 1 and 2]{DeNitti2023}, \eqref{w/U to 0} implies that $w(\tau) \to U$ in $\dhs$ as $\tau \to \infty$. This implies $U(\tau) \to U$ in $\dhs$, so in particular the parameters of $U(\tau) = U[z(\tau), \lambda(\tau)]$ satisfy $(z(\tau), \lambda(\tau)) \to (0,1)$ by Lemma \ref{lemma U z lambda}(i). Together with \eqref{w/U to 0} and Lemma \ref{lemma U z lambda}(ii), we thus find 
\begin{equation}
    \label{qualitative bound}
    \left\|\frac{w(\tau)}{U}- 1\right\|_{L^\infty(\R^N)} + \|w(\tau)-U\|_\dhs^\gamma + \left\|1 - \left(\frac{U}{U(\tau)}\right)^{p-1}\right\|_{L^\infty(\R^N)} \leq \eps 
\end{equation}
for every $\tau$ large enough. Thus
  Lemma \ref{lemma easy ineq} and Lemma \ref{lemma hard ineq} combine to give the inequality
\begin{equation}
    \label{J epsilon estimate}
     J(w(\tau)) - J(U) \leq - p (1 + \eps)^{p-1} \left( \frac{1}{2 \nu_{N+2}} + \tilde C \eps \right)  \frac{\diff}{\diff \tau} J(w (\tau)) 
\end{equation}
for some $\tilde C > 0$, and every $\tau$ large enough. That is, 
\begin{equation}
    \label{diff ineq suboptimal}
    \frac{\frac{\diff}{\diff \tau} (J(w (\tau)) - J(U))}{J(w(\tau)) - J(U)} \leq - p^{-1} (1 + \eps)^{1-p}  \left( \frac{1}{2 \nu_{N+2}} + \tilde C \eps \right) ^{-1}. 
\end{equation} 
Now for any given $0 <\mu < \frac{2 \nu_{N+2}}{p} = \frac{8s}{N-2s+2}$, we can choose $\eps > 0$ such that 
\[ p^{-1} (1 + \eps)^{1-p}  \left( \frac{1}{2 \nu_{N+2}} +\tilde C \eps \right) ^{-1} = \mu. \]
By integrating the differential inequality \eqref{diff ineq suboptimal} from some fixed large $\tau_*$ to $\tau$, we obtain \eqref{J convergence subcrit}
with $C = (J(w(\tau_*) - J(U)) \exp (\mu \tau_*)$.  

\textit{Step 2.} We now complete the proof of Theorem \ref{theorem exponential decay} by showing that \eqref{J convergence subcrit} also holds for the limiting exponent, that is, 
\begin{equation}
    \label{J convergence crit}
     J(w(\tau)) - J(U) \leq C \exp \left(- \frac{8s}{N-2s+2} \tau \right).
\end{equation}
for every $\tau$ large enough. 

Thanks to Step 1, we have
\begin{equation}
    \label{exponential bound}
    \left\| \frac{w}{U} - 1 \right\|_{L^\infty (\R^N)} + \| w(\tau) - U \|_\dhs^\gamma +  \left\|1 - \left(\frac{U}{U(\tau)}\right)^{p-1}\right\|_{L^\infty(\R^N)} \leq C e^{-d \tau}
\end{equation}
for some $ d , C > 0 $ (which may both change from line to line in the following) and every $\tau$ large enough. Indeed, the exponential bound on the first two summands follows directly from \eqref{J convergence subcrit} by  Lemmas \ref{rel_err_exponential} and \ref{J-ctrl-rho}, respectively. 
Using the definition of $U(\tau)$ as $\dhs$-distance minimizer, we deduce moreover that
\[ \|U(\tau)-U\|_\dhs \leq \|w(\tau) - U(\tau)\|_\dhs + \|w(\tau) - U\|_\dhs \leq 2 \|w(\tau) - U\|_\dhs \leq C e^{-d\tau}. \] 
By Lemma \ref{lemma U z lambda}(i), this implies that $|z(\tau)| + |\lambda(\tau) - 1| \leq C e^{-d \tau}$. From this, by Lemma \ref{lemma U z lambda}(ii) we get
\[ \left\|1 - \left(\frac{U}{U(\tau)}\right)^{p-1}\right\|_{L^\infty(\R^N)}  \leq  C e^{-d \tau}, \]
which completes the proof of \eqref{exponential bound}. 

Repeating the argument in Step 1, but using \eqref{exponential bound} instead of \eqref{qualitative bound}, from Lemmas \ref{lemma easy ineq} and \ref{lemma hard ineq} we obtain \eqref{J epsilon estimate} and \eqref{diff ineq suboptimal}, but with $C e^{-d  \tau}$ instead of $\eps$. That is,
\begin{equation}
    \label{diff ineq optimal}
    \frac{\frac{\diff}{\diff \tau} (J(w (\tau)) - J(U))}{J(w(\tau)) - J(U)} \leq -  \frac{8s}{N-2s+2} + C e^{-d \tau} 
\end{equation} 
for every $\tau$ large enough. By integrating this inequality from some fixed large $ \tau_*$ to $\tau$, we get
$$
J(w(\tau)) - J(U) \leq \left( J(w(\tau_*)) - J(U) \right) e^{\frac{C}{d }} \exp \left(- \frac{8s}{N-2s+2} \left( \tau - \tau_* \right) \right),
$$
which proves \eqref{J convergence crit}.

By Lemma \ref{rel_err_exponential}, \eqref{J convergence crit} now directly implies \eqref{w bound theorem}. The bound \eqref{u bound theorem} follows  from \eqref{w bound theorem} via \eqref{w in terms of u} and \eqref{change of variables t tau}. 
\end{proof}

\subsection{The proof of Lemma \ref{lemma hard ineq}}

We now turn to the proof of the main technical estimate we have used, namely Lemma \ref{lemma hard ineq}. Our proof of this lemma is inspired by \cite[Section 5]{Akagi2023}. However, due to the fact that in our case the linearized operator $\mathcal L_{U(\tau)}$ has non-trivial kernel, we need to make several adjustments. This concerns most of all Step 1 below, which is not needed at all in \cite{Akagi2023}. In addition, we found it also necessary to modify the estimates in the main line of the argument in order to avoid the inverse operator $\mathcal L_U^{-1}$, which is made frequent use of in \cite{Akagi2023}, but which is ill-defined in our setting.   

\begin{proof}
    [Proof of Lemma \ref{lemma hard ineq}]

\textit{Step 1. Choice of Taylor basepoint.  } To treat the crucial additional difficulty (with respect to \cite{Bonforte2021, Akagi2023}) that $\ker \mathcal L_{U[z, \lambda]} \neq \{0\}$, we choose the basepoint for the Taylor expansion of $J$ \emph{depending on $\tau$}. Namely, we Taylor expand in the function $U(\tau)$ from Lemma \ref{lemma U(tau)} and write in the following 
\[ w(\tau) = U(\tau) + \rho(\tau). \]
Since the tangent space of $\mathcal M$ in $U(\tau)$ is precisely spanned by the functions $\pl U(\tau) = \frac{\diff}{\diff \lambda}|_{\lambda = \lambda(\tau)} U[z(\tau), \lambda]$ and $\pzi U(\tau) =  \frac{\diff}{\diff  z_i}|_{z = z(\tau)} U[z, \lambda(\tau)]$ for $i = 1,...,N$, the minimizing property of $U(\tau)$ implies that 
\begin{equation}
    \label{rho orthogonality}
    0 = \lsc \rho, \pl U(\tau)\rsc_\dhs = \lsc \rho, \pzi U(\tau)\rsc_\dhs \qquad \text{ for all } i=1,...,N. 
\end{equation}

\emph{Step 2. Expanding $J(w(\tau)) - J(U)$ in terms of eigenfunctions of $\mathcal L_{U(\tau)}$.  }

Noting that $J(U(\tau)) = J(U)$, by Lemma \ref{Taylor_for} we can expand 
\begin{align}
    J(w(\tau)) - J(U) &= J(w(\tau)) - J(U(\tau)) = \frac{1}{2} \lsc \rho(\tau), \mathcal L_{U(\tau)} \rho(\tau) \rsc_\ast + \mathcal O(\|\rho(\tau)\|_\dhs^{2 + \gamma}). \label{taylor}
\end{align}

By Proposition \ref{proposition linearized operator}, there is an orthonormal basis of $\dhs$ made from eigenfunctions $(e_j(\tau))_{j \in \N_0}$ of the operator $\mathcal L_{U(\tau)}$. For the following calculations, we now expand $\rho(\tau)$ into this basis, which gives
\[ \rho(\tau) = \sum_{j \in \N_0} \sigma_j(\tau) e_j(\tau) \]
for certain coefficients $\sigma_j(\tau) \in \R$. By Proposition \ref{proposition linearized operator}, the orthogonality condition \eqref{rho orthogonality} translates precisely to 
\begin{equation}
    \label{sigma j = 0}
     \sigma_j(\tau) = 0 \qquad \text{ for all } j = 1,...,N+1. 
\end{equation}

For the following estimates it will be important to keep track of the low eigenfunction $\sigma_0(\tau) e_0(\tau)$ separately. For this purpose, we introduce the notation 
\begin{equation}
    \label{Tilde rho}
    \Tilde{\rho}(\tau) := \sum_{j \geq N+2} \sigma_j(\tau) e_j(\tau), 
\end{equation} 
so that, in view of \eqref{sigma j = 0}, 
\begin{equation}
    \label{rho decomp}
    \rho(\tau) = \sigma_0(\tau) e_0(\tau) + \Tilde{\rho}(\tau). 
\end{equation}

Recalling that the eigenvalues $\nu_j$ introduced in Proposition \ref{proposition linearized operator} fulfill
\begin{equation}
    \label{nu j proof}
    \mathcal L_{U(\tau)} e_j  = \nu_j U(\tau)^{p-1} e_j, 
\end{equation} 
let us calculate the main term on the right side of \eqref{taylor}. It is convenient to introduce the numbers $\mu_j = \nu_j + p > 0$, so that  
\begin{equation}
    \label{mu j proof}
     (-\Delta)^s e_j(\tau) =  \mu_j U(\tau)^{p-1} e_j(\tau).  
\end{equation}
Since we choose the $(e_j(\tau))$ orthonormal in $\dot H^s(\R^N)$, this yields the relations
\begin{equation}
\label{eij products}
    \int_{\R^N} e_i(\tau) (-\Delta)^s e_j(\tau) = \delta_{ij}, \qquad  \int_{\R^N} e_i(\tau) \mathcal L_{U(\tau)} e_j(\tau)  = \delta_{ij} \frac{\nu_j}{\mu_j}, \qquad     \int_{\R^N} e_i(\tau) U(\tau)^{p-1} e_j(\tau) = \delta_{ij} \mu_j^{-1}
\end{equation}
which we will use repeatedly in the following computations.

From \eqref{taylor} we thus obtain 
\begin{align*}
        J(w(\tau)) - J(U(\tau)) &\leq \frac{1}{2} \sigma_0(\tau)^2 \frac{\nu_0}{\mu_0} + \frac{1}{2} \lsc \Tilde{\rho}(\tau), \mathcal L_{U(\tau)} \Tilde{\rho} (\tau) \rsc_\ast + \beta(\tau) \|\rho(\tau)\|_{\dhs}^2 \\
        &\leq \left( \frac{1}{2} \sigma_0(\tau)^2 \left(\frac{\nu_0}{\mu_0} + 2\beta(\tau)\right) \right) + \frac{1}{2} \lsc \Tilde{\rho}(\tau), \mathcal L_{U(\tau)} \Tilde{\rho}(\tau) \rsc_\ast + \beta(\tau) \|\Tilde{\rho}(\tau)\|_{\dhs}^2 \\
        &\leq \left( \frac{1}{2} \sigma_0(\tau)^2 \left(\frac{\nu_0}{\mu_0} + 2\beta(\tau)\right) \right) + \left( \frac{1}{2} + C\beta(\tau) \right) \lsc \Tilde{\rho}(\tau), \mathcal L_{U(\tau)} \Tilde{\rho}(\tau) \rsc_\ast,
\end{align*}
where $\beta(\tau) = \mathcal O(\|\rho(\tau)\|^\gamma_\dhs)$. The last inequality follows from the estimate 
\begin{equation}
    \label{rho tilde estimate}
    \|\Tilde{\rho}(\tau)\|_{\dhs}^2 = \sum_{j \geq N +2} \sigma_j(\tau)^2 \leq C \sum_{j \geq N +2} \frac{\nu_j}{\mu_j} \sigma_j(\tau)^2 = C \lsc \Tilde{\rho}(\tau), \mathcal L_{U(\tau)} \Tilde{\rho}(\tau) \rsc_\ast,
\end{equation}
which holds for some $C > 0$ because the sequence $(\frac{\nu_j}{\mu_j})_{j \geq N+2}$ is strictly positive and bounded away from zero. (Note that estimate \eqref{rho tilde estimate} would not necessarily be true for $\Tilde{\rho}(\tau)$ replaced by $\rho(\tau)$, because $\nu_0 < 0$.)

By Lemma \ref{lemma rho L rho estimate} below, this yields 
\begin{align*}
           & \qquad J(w(\tau)) - J(U(\tau)) \\
           &\leq  \frac{1}{2} \sigma_0(\tau)^2 \left(\frac{\nu_0}{\mu_0} + 2(\beta(\tau) + \delta(\tau))\right)  +  \left( \frac{1}{2} + C\beta(\tau) \right) \left( \frac{1}{\nu_{N+2}} + \delta(\tau) \right) \|J'(w(\tau))\|_{  L^2( \R^N , U(\tau)^{1-p} \diff{x} ) }^2 \\
           &\leq  \frac{1}{2} \sigma_0(\tau)^2 \left(\frac{\nu_0}{\mu_0} + 
 \mathcal O(\|\rho(\tau)\|_\dhs^\gamma) \right) + \left( \frac{1}{2 \nu_{N+2}} + \mathcal O(\|\rho(\tau)\|_\dhs^\gamma)  \right)\|J'(w(\tau))\|_{  L^2( \R^N , U(\tau)^{1-p} \diff{x} ) }^2 
\end{align*}
for $\tau$ large enough. Since  $\nu_0 = 1 - p < 0$, we have $\frac{\nu_0}{\mu_0} + \mathcal O(\|\rho(\tau)\|_\dhs^\gamma) < 0$ for $\tau$ large enough. Therefore, 
\begin{equation}
    \label{est prefinal}
    J(w(\tau)) - J(U(\tau)) \lesssim \left( \frac{1}{2 \nu_{N+2}} + \mathcal O(\|\rho(\tau)\|_\dhs^\gamma)  \right)\|J'(w(\tau))\|_{  L^2( \R^N , U(\tau)^{1-p} \diff{x} ) }^2. 
\end{equation} 

\textit{Step 3. Replacing $U(\tau)$ by $U$.}

Estimate \eqref{est prefinal} is already essentially of the desired form given in \eqref{hard ineq lemma}. However, for further use the statement  \eqref{hard ineq lemma}, where some occurrences of $U(\tau)$ in \eqref{est prefinal} are replaced by $U$, is more convenient to use, in particular in conjunction with Lemmas \ref{rel_err_exponential} and \ref{J-ctrl-rho} below. 

To turn \eqref{est prefinal} into \eqref{hard ineq lemma}, we first observe that by the minimality of $U(\tau)$, one has
\begin{equation}
    \label{est rho}
    \|\rho(\tau)\|_\dhs = \|w(\tau) - U(\tau)\|_{\dhs} \leq \|w(\tau) - U\|_{\dhs}. 
\end{equation} 
Moreover, 
\begin{align}
  \nonumber  &\qquad  \|J'(w(\tau))\|_{  L^2( \R^N , U(\tau)^{1-p} \diff{x} ) }^2 = \int_{\R^N} |J'(w(\tau))|^2 U(\tau)^{1-p} \diff x \\
  \nonumber  &= \int_{\R^N} |J'(w(\tau))|^2 U^{1-p} \left(1 - \left( 1 - \frac{U^{p-1}}{U(\tau)^{p-1}} \right) \right) \diff x \\
  \label{est J'}  &= \|J'(w(\tau))\|_{  L^2( \R^N , U^{1-p} \diff{x} ) }^2 \left( 1 + \mathcal O \left(   \left\|1 - \left(\frac{U}{U(\tau)}\right)^{p-1}\right\|_{L^\infty(\R^N)} \right) \right).
\end{align}
Combining \eqref{est prefinal}, \eqref{est rho} and \eqref{est J'}, we deduce \eqref{hard ineq lemma} with $\alpha(\tau)$ given by \eqref{alpha lemma hard}, and the proof is complete. 
\end{proof}

\begin{lemma}
    \label{lemma rho L rho estimate}
    Let $w$ solve \eqref{w equation}, let $J$ be given by \eqref{J definition} and let $p = \frac{N+2s}{N-2s}$. Let $\mathcal L_{U[z, \lambda]}$ be as in Proposition \ref{proposition linearized operator}, let $U(\tau)$ be as in Lemma \ref{lemma U(tau)} and let $\tilde{\rho}$ be defined by \eqref{Tilde rho}.
    
    Then for every $\tau$ large enough, we have
    \[ \lsc \Tilde{\rho}, \mathcal L_{U(\tau)} \Tilde{\rho} \rsc_\ast \leq \left( \frac{1}{\nu_{N+2}} + \delta(\tau) \right) \|J'(w(\tau))\|_{  L^2( \R^N , U(\tau)^{1-p} \diff{x} ) }^2  + \delta(\tau) \sigma_0(\tau)^2,   \]
    where $\delta(\tau) = \mathcal O(\|w(\tau) - U\|^\gamma_\dhs)$. 
\end{lemma}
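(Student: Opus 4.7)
The strategy is to expand both sides of the desired inequality in the $\mathcal L_{U(\tau)}$-eigenbasis $(e_j(\tau))_{j \geq 0}$ and then exploit the spectral gap $\nu_j \geq \nu_{N+2} > 0$ for $j \geq N+2$. Writing $w(\tau) = U(\tau) + \rho(\tau)$ with $\rho(\tau) = \sigma_0(\tau) e_0(\tau) + \tilde \rho(\tau)$ and using $(-\Delta)^s U(\tau) = U(\tau)^p$, the identity $J'(w) = (-\Delta)^s w - w^p$ decomposes as
\[ J'(w(\tau)) = \mathcal L_{U(\tau)} \rho(\tau) - R(\tau), \qquad R(\tau) := (U(\tau) + \rho)^p - U(\tau)^p - p U(\tau)^{p-1} \rho, \]
where $R(\tau)$ is the nonlinear Taylor remainder of $v \mapsto v^p$ at $v = U(\tau)$. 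By an argument analogous to that for Lemma \ref{Taylor_for}, this remainder satisfies $\|R(\tau)\|_{L^2(\R^N, U(\tau)^{1-p} \diff x)}^2 \lesssim \|\rho(\tau)\|_\dhs^{2 + \gamma}$ for some $\gamma > 0$; this will be the main quantitative input.

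From \eqref{mu j proof} and the orthogonality identities \eqref{eij products}, an eigenbasis expansion yields
\[ \|\mathcal L_{U(\tau)} \rho\|_{L^2(\R^N, U(\tau)^{1-p} \diff x)}^2 = \sigma_0^2 \, \frac{\nu_0^2}{\mu_0} + \sum_{j \geq N+2} \sigma_j^2 \, \frac{\nu_j^2}{\mu_j}. \]
Combining this with $\mathcal L_{U(\tau)} \rho = J'(w(\tau)) + R(\tau)$, Cauchy--Schwarz and Young's inequality with parameter $\eta \in (0, 1/2)$ give
\[ \sigma_0^2 \, \frac{\nu_0^2}{\mu_0} + \sum_{j \geq N+2} \sigma_j^2 \, \frac{\nu_j^2}{\mu_j} \leq (1 + \mathcal O(\eta)) \|J'(w(\tau))\|_{L^2(\R^N, U(\tau)^{1-p} \diff x)}^2 + C \eta^{-1} \|\rho\|_\dhs^{2 + \gamma}. \]

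The spectral gap estimate $\nu_j \geq \nu_{N+2}$ for $j \geq N+2$, together with dropping the nonnegative term $\sigma_0^2 \nu_0^2 / \mu_0$ on the left above, then yields
\[ \lsc \tilde \rho, \mathcal L_{U(\tau)} \tilde \rho \rsc_\ast = \sum_{j \geq N+2} \sigma_j^2 \, \frac{\nu_j}{\mu_j} \leq \frac{1}{\nu_{N+2}} \sum_{j \geq N+2} \sigma_j^2 \, \frac{\nu_j^2}{\mu_j} \leq \frac{1 + \mathcal O(\eta)}{\nu_{N+2}} \, \|J'(w)\|_{L^2(U(\tau)^{1-p})}^2 + \frac{C}{\eta\, \nu_{N+2}} \|\rho\|_\dhs^{2 + \gamma}. \]
Choosing $\eta = \|\rho(\tau)\|_\dhs^{\gamma/2}$ (small for $\tau$ large), and using $\|\rho\|_\dhs^{2 + \gamma/2} = \|\rho\|_\dhs^{\gamma/2}(\sigma_0^2 + \|\tilde \rho\|_\dhs^2)$ together with \eqref{rho tilde estimate}, a small multiple of $\lsc \tilde \rho, \mathcal L_{U(\tau)} \tilde \rho \rsc_\ast$ on the right-hand side can be absorbed into the left. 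This yields the claimed estimate with $\delta(\tau) = \mathcal O(\|\rho(\tau)\|_\dhs^{\gamma/2})$, and the minimality of $U(\tau)$ in Lemma \ref{lemma U(tau)} gives the required $\|\rho(\tau)\|_\dhs \leq \|w(\tau) - U\|_\dhs$ (after relabelling the exponent).

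The main obstacle is handling the unstable direction $\sigma_0 e_0(\tau)$, for which $\nu_0 = 1 - p < 0$, and which cannot be controlled via the inverse $\mathcal L_U^{-1}$ as in \cite{Akagi2023} because $\mathcal L_{U(\tau)}$ is not invertible in our setting. This is resolved by retaining the $\sigma_0^2$ contribution as a separate additive term $\delta(\tau) \sigma_0^2$ on the right-hand side of the desired inequality, where it is absorbed in the proof of Lemma \ref{lemma hard ineq} by the genuinely negative coefficient $\nu_0/\mu_0$ multiplying $\sigma_0^2$ in the Taylor expansion of $J(w(\tau)) - J(U)$.
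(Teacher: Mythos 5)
Your argument follows a genuinely different route from the paper's. The paper never estimates $\|\mathcal L_{U(\tau)}\rho\|_{L^2(\R^N, U(\tau)^{1-p}\diff x)}$ at all: it pairs $\Tilde\rho$ directly against $J'(w(\tau)) = \mathcal L_{U(\tau)}\rho + e(\tau)$, controls the error pairing by $\|\Tilde\rho\|_\dhs\|e(\tau)\|_\dhms$ using only the $\dhms$-bound on the remainder from Lemma \ref{Taylor_for}, and then applies Cauchy--Schwarz in the form $\lsc\Tilde\rho, J'\rsc_\ast \le (\int \Tilde\rho^2 U(\tau)^{p-1})^{1/2}\|J'\|_{L^2(U(\tau)^{1-p})}$ together with the spectral gap $\int\Tilde\rho^2 U(\tau)^{p-1} = \sum_{j\ge N+2}\sigma_j^2\mu_j^{-1} \le \nu_{N+2}^{-1}\lsc\Tilde\rho,\mathcal L_{U(\tau)}\Tilde\rho\rsc_\ast$ and Young's inequality. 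Your version instead passes through the identity $\|\mathcal L_{U(\tau)}\rho\|^2_{L^2(U(\tau)^{1-p})} = \sum_j \sigma_j^2\nu_j^2/\mu_j$ and the triangle inequality for $\mathcal L_{U(\tau)}\rho = J'(w) + R$. The algebra, the use of \eqref{eij products}, the spectral gap step $\nu_j/\mu_j \le \nu_j^2/(\nu_{N+2}\mu_j)$, the dropping of the nonnegative term $\sigma_0^2\nu_0^2/\mu_0$ (note $\mu_0 = \nu_0 + p = 1 > 0$), the absorption via \eqref{rho tilde estimate}, and the treatment of the unstable direction are all correct, and the weaker exponent $\gamma/2$ in $\delta(\tau)$ is harmless for the application in Lemma \ref{lemma hard ineq}.

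There is, however, one step you cannot dispatch "by an argument analogous to that for Lemma \ref{Taylor_for}": the bound $\|R(\tau)\|^2_{L^2(\R^N, U(\tau)^{1-p}\diff x)} \lesssim \|\rho(\tau)\|_\dhs^{2+\gamma}$. Lemma \ref{Taylor_for} controls the remainder only in $\dhms$, and since $\|f\|_\dhms \lesssim \|f\|_{L^2(U(\tau)^{1-p})}$ (by Cauchy--Schwarz, Sobolev, and $U^{p+1}\in L^1$) the weighted $L^2$ norm is \emph{strictly stronger}; your claim does not follow from the Banach-space Taylor theorem underlying Lemma \ref{Taylor_for}. The estimate is in fact true, but proving it requires different inputs: for $1<p\le 2$ one uses the pointwise bound $|R|\lesssim|\rho|^p$ and writes $\int|\rho|^{2p}U^{1-p} = \int|\rho|^{p+1}(|\rho|/U)^{p-1} \le \|\rho/U(\tau)\|_{L^\infty}^{p-1}\|\rho\|_{L^{p+1}}^{p+1}$, which needs the uniform boundedness of $\rho/U(\tau)$ coming from \eqref{w/U to 0} (and Lemma \ref{lemma U z lambda}) --- an ingredient absent from Lemma \ref{Taylor_for}; for $p>2$ the extra term $\int U^{p-3}\rho^4$ requires a similar weighted interpolation. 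You should either supply this argument explicitly or switch to the paper's pairing, which only ever needs the $\dhms$ remainder bound that Lemma \ref{Taylor_for} already provides. The trade-off is clear: your route gives a cleaner spectral picture but demands a stronger (weighted $L^2$) Taylor remainder estimate, whereas the paper's duality-based pairing gets by with the weaker one.
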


\begin{proof}
By the Taylor expansion from Lemma \ref{Taylor_for}, we can write 
\[ J'(w(\tau)) = \mathcal L_{U(\tau)} \rho(\tau) + e(\tau), \]
where $e(\tau) \in \dhms$ is an error term such that $\|e(\tau)\|_{\dot{H}^{-s}(\R^N)} = \mathcal O(\|\rho(\tau)\|_\dhs^{1 + \gamma})$ as $\tau \to \infty$, for some $\gamma \in (0, 1]$. 
Moreover, by orthogonality we can write $\lsc \Tilde{\rho}(\tau), \mathcal L_{U(\tau)} \Tilde{\rho}(\tau) \rsc_\ast = \lsc \Tilde{\rho}(\tau), \mathcal L_{U(\tau)} \rho(\tau) \rsc_\ast$. Thus
\begin{align*}
    \lsc \Tilde{\rho}(\tau), \mathcal L_{U(\tau)} \Tilde{\rho}(\tau) \rsc_\ast =  \lsc \Tilde{\rho}(\tau), J'(w(\tau)) \rsc_\ast -  \lsc \Tilde{\rho}(\tau), e(\tau) \rsc_\ast. 
\end{align*}
The error term can be estimated by 
\begin{align*}
     \left| \lsc \Tilde{\rho}(\tau), e(\tau) \rsc_\ast \right| &\leq \|\Tilde{\rho}(\tau)\|_{\dhs} \|e(\tau)\|_{\dhms} \leq \delta(\tau) \|\Tilde{\rho}(\tau)\|_{\dhs} \|{\rho}(\tau)\|_{\dhs} \\
     &\leq \delta(\tau) \sigma_0(\tau)^2 + \delta(\tau) \|\Tilde{\rho}(\tau)\|_{\dhs}^2
\end{align*}
for every $\tau$ large enough, where $\delta(\tau) = \mathcal O(\|\rho(\tau)\|_\dhs)$. Using \eqref{rho tilde estimate}, we get 
\begin{equation}
    \label{1 - C intermediate}
    (1 - C \delta(\tau)) \lsc \Tilde{\rho}(\tau), \mathcal L_{U(\tau)} \Tilde{\rho}(\tau) \rsc_\ast \leq \lsc \Tilde{\rho}(\tau), J'(w(\tau)) \rsc_\ast + \delta(\tau) \sigma_0(\tau)^2.    
\end{equation} 
Now by \eqref{eij products} and Cauchy-Schwarz, we have
\begin{align*}
    \lsc \Tilde{\rho}(\tau), J'(w(\tau)) \rsc_\ast &= \int_{\R^N} \Tilde{\rho}(\tau) J'(w(\tau)) \\
    &\leq \left( \int_{\R^N} \Tilde{\rho}(\tau)^2 U(\tau)^{p-1} \right)^{1/2} \|J'(w(\tau))\|_{  L^2( \R^N , U(\tau)^{1-p} \diff{x} ) } \\
    &= \left( \sum_{j \geq N+2} \sigma_j(\tau)^2 \mu_j^{-1}  \right)^{1/2} \|J'(w(\tau))\|_{  L^2( \R^N , U(\tau)^{1-p} \diff{x} ) }  \\
    &\leq \left( \sum_{j \geq N+2} \sigma_j(\tau)^2 \frac{\nu_j}{\mu_j} \right)^{1/2} \frac{1}{\sqrt{\nu_{N+2}}}\|J'(w(\tau))\|_{  L^2( \R^N , U(\tau)^{1-p} \diff{x} ) } \\
    &\leq \frac{1}{2}  \sum_{j \geq N+2} \sigma_j(\tau)^2 \frac{\nu_j}{\mu_j} + \frac{1}{2 \nu_{N+2}} \|J'(w(\tau))\|_{  L^2( \R^N , U(\tau)^{1-p} \diff{x} ) }^2 \\
    &= \frac{1}{2} \lsc \Tilde{\rho}(\tau), \mathcal L_{U(\tau)} \Tilde{\rho}(\tau) \rsc_\ast +  \frac{1}{2 \nu_{N+2}} \|J'(w(\tau))\|_{  L^2( \R^N , U(\tau)^{1-p} \diff{x} ) }^2. 
\end{align*}  
Inserting this estimate into \eqref{1 - C intermediate}, we obtain 
\[  \left(\frac{1}{2} - C \delta(\tau) \right) \lsc \Tilde{\rho}(\tau), \mathcal L_{U(\tau)} \Tilde{\rho}(\tau) \rsc_\ast \leq \frac{1}{2 \nu_{N+2}} \|J'(w(\tau))\|_{  L^2( \R^N , U(\tau)^{1-p} \diff{x} ) }^2 + \delta(\tau) \sigma_0(\tau)^2.  \]
Dividing by $\left(\frac{1}{2} - C \delta(\tau) \right)$ gives the   conclusion. 
\end{proof}

\subsection{Auxiliary lemmas}
\label{subsec auxiliary}

In this section, we state and prove several auxiliary lemmas we previously used to prove Theorem \ref{theorem exponential decay}.

The following lemma gives a fine enough Taylor expansion of $J$ and its derivative. We apply it several times in the proof of Theorem \ref{theorem exponential decay}.

\begin{lemma}\label{Taylor_for}
For $\tau$ large enough, let $U(\tau)$ be as in Lemma \ref{lemma U(tau)} and let $\mathcal L_{U(\tau)}$ be defined by \eqref{L definition}.
Then
    \begin{equation}\label{Taylor_1}
        J(w(\tau)) - J(U(\tau)) = \frac{1}{2} \lsc w(\tau) - U(\tau), \mathcal L_{U(\tau)} ( w(\tau) - U(\tau) ) \rsc_\ast + E(\tau),
    \end{equation}
    \begin{equation}\label{Taylor_2}
        J'(w(\tau)) = \mathcal L_{U(\tau)} ( w(\tau) - U(\tau) ) + e(\tau).
    \end{equation}
Here, $\lsc \cdot, \cdot \rsc_\ast$ denotes the dual pairing between $\dhs$ and $\dot{H}^{-s}(\R^N)$, $E(\tau) \in \R$ and $ e(\tau) \in \dhms $ denote generic terms which satisfy
\begin{equation}\label{err_term}
   \lim_{\tau \to \infty} \frac{|E(\tau)|}{\| w(\tau) - U(\tau) \|_\dhs^{2+\gamma} } < +\infty \quad \text{and} \quad \lim_{\tau \to \infty} \frac{ \| e(\tau) \|_\dhms }{ \| w(\tau) - U(\tau) \|_\dhs^{1+\gamma} } < +\infty 
\end{equation}
for some $ \gamma \in (0,1] $.

The same expansions hold for $U(\tau)$ replaced by $U = \lim_{\tau \to \infty} w(\tau)$. 
\end{lemma}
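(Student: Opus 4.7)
The plan is to view $J$ as a smooth functional on $\dhs$ and carry out a second-order Taylor expansion around the critical point $U(\tau)$. Write $J = Q - F$ where $Q(v) = \frac{1}{2}\|v\|_\dhs^2$ and $F(v) = \frac{1}{p+1} \int_{\R^N} |v|^{p+1} \diff x$. The functional $Q$ is exactly quadratic, so $Q(U(\tau)+\rho) - Q(U(\tau)) = \lsc U(\tau), (-\Delta)^s \rho\rsc + \frac{1}{2}\|\rho\|_\dhs^2$ with no remainder. For $F$, I would use the pointwise Taylor-type inequality
\[
\left| |a+b|^{p+1} - |a|^{p+1} - (p+1)|a|^{p-1}a\,b - \tfrac{p(p+1)}{2}|a|^{p-1} b^2 \right| \lesssim |a|^{p-2} |b|^3 \chi_{\{p\geq 2\}} + |b|^{p+1},
\]
with the convention that the first term is absent when $p<2$. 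Then set $a = U(\tau)$, $b = \rho(\tau) := w(\tau)-U(\tau)$, integrate, and use the fact that $U(\tau)$ solves \eqref{U equation} (so that the linear term becomes $\lsc (-\Delta)^s U(\tau), \rho\rsc$ and combines with the corresponding term from $Q$ to produce $\lsc J'(U(\tau)), \rho\rsc_* = 0$). This yields \eqref{Taylor_1} with
\[
|E(\tau)| \;\lesssim\; \int_{\R^N} U(\tau)^{p-2}|\rho|^3 \chi_{\{p\geq 2\}} \diff x \;+\; \int_{\R^N} |\rho|^{p+1} \diff x.
\]

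To convert these into norms, I would apply Hölder with exponents adapted to $p+1 = 2^*_s$ together with Sobolev embedding $\dhs \hookrightarrow L^{p+1}(\R^N)$. The second term is immediately $\lesssim \|\rho\|_\dhs^{p+1}$. When $p\geq 2$, write $U^{p-2}|\rho|^3 = U^{p-2}\cdot\rho^2\cdot|\rho|$ and use Hölder with exponents $\frac{p+1}{p-2},\frac{p+1}{2},p+1$ (whose reciprocals sum to $1$) to bound the first term by $\|U(\tau)\|_{p+1}^{p-2}\|\rho\|_{p+1}^{3} \lesssim \|\rho\|_\dhs^{3}$, using that $\|U(\tau)\|_{\dhs}$ stays bounded. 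Collecting, one obtains $|E(\tau)| \lesssim \|\rho(\tau)\|_\dhs^{2+\gamma}$ with $\gamma = \min(1,p-1) \in (0,1]$.

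For \eqref{Taylor_2}, use that $J'(v) = (-\Delta)^s v - |v|^{p-1}v$ to write
\[
e(\tau) = J'(w(\tau)) - \mathcal L_{U(\tau)}\rho(\tau) = -\bigl[|w(\tau)|^{p-1}w(\tau) - U(\tau)^p - p\,U(\tau)^{p-1}\rho(\tau)\bigr],
\]
since $(-\Delta)^s U(\tau) = U(\tau)^p$. Apply the analogous pointwise bound $\bigl||a+b|^{p-1}(a+b) - |a|^{p-1}a - p|a|^{p-1}b\bigr| \lesssim |a|^{p-2}b^2\chi_{\{p\geq 2\}} + |b|^p$, and estimate $\|e(\tau)\|_\dhms$ by duality: for $\phi \in \dhs$ with $\|\phi\|_\dhs \leq 1$, split $|\lsc e(\tau),\phi\rsc|$ via Hölder as above (now with exponents $\frac{p+1}{p-2},\frac{p+1}{2},\frac{p+1}{2}$ for the first summand and $\frac{p+1}{p},p+1$ for the second), obtaining $\|e(\tau)\|_\dhms \lesssim \|\rho(\tau)\|_\dhs^{1+\gamma}$ with the same $\gamma$.

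The hardest point will be verifying the pointwise Taylor-type inequalities in the low-regularity regime $p \in (1,2)$ (corresponding to $N>6s$), where the function $t \mapsto |t|^{p+1}$ is only $C^{1,p}$ rather than $C^2$; here the quadratic remainder cannot be dominated by $|b|^2$ times a bounded factor, and one must accept the weaker $|b|^{p+1}$ term, which is what forces the exponent $\gamma = p-1$. Once these pointwise bounds are accepted, the rest reduces to Hölder and Sobolev. The final sentence of the lemma, with $U(\tau)$ replaced by the limit $U$, follows by repeating verbatim the same argument since $U$ is also a stationary solution to \eqref{U equation} and $\|w(\tau)-U\|_\dhs \to 0$ by assumption.
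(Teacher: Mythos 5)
Your proposal is correct, but it takes a genuinely different route from the paper. The paper proves the lemma abstractly: for $p \geq 2$ it observes that $J$ is $C^3$ on $\dhs$ and invokes a Banach-space Taylor theorem (\cite[Theorem A.2]{Akagi2023}), and for $1 < p < 2$ it introduces the weighted spaces $X_1 = \{ w : wU(\tau)^{(p-2)/2} \in L^{2\cdot 2_*}\}$ and $X_2 = \{w : wU(\tau)^{(p-2)/3} \in L^3\}$ and verifies Gateaux differentiability of $J''$ along the segment joining $U(\tau)$ and $w(\tau)$, following \cite[Section 7]{Akagi2023}. You instead split $J$ into its exactly quadratic part and the potential term, apply elementary pointwise Taylor inequalities for $t \mapsto |t|^{p+1}$ and $t \mapsto |t|^{p-1}t$ (using $(p-1)$-H\"older continuity of $f''$ when $p<2$, which correctly forces $\gamma = p-1$ there), and then conclude by H\"older and Sobolev, exploiting $p+1 = \tfrac{2N}{N-2s}$ and the $(z,\lambda)$-independence of $\|U(\tau)\|_{L^{p+1}}$. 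This buys a self-contained, elementary argument that entirely avoids the weighted spaces and the abstract differentiability discussion; the paper's route, by contrast, imports Akagi's machinery wholesale and is more transferable to other nonlinearities. One small slip: in the duality estimate for $e(\tau)$ you list the H\"older exponents $\tfrac{p+1}{p-2}, \tfrac{p+1}{2}, \tfrac{p+1}{2}$ for the term $\int U^{p-2}\rho^2|\phi|$, whose reciprocals sum to $\tfrac{p+2}{p+1} > 1$; the correct triple is $\tfrac{p+1}{p-2}, \tfrac{p+1}{2}, p+1$ (the same as in the $E(\tau)$ estimate), which yields the claimed bound $\|e(\tau)\|_{\dhms} \lesssim \|\rho(\tau)\|_\dhs^{2} + \|\rho(\tau)\|_\dhs^{p}$ as intended.
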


\begin{proof}
    When $ p \geq 2 $, $J$ is $C^3$ in $\dhs$. 
    Using Taylor's theorem in general Banach spaces (see \cite[Theorem A.2]{Akagi2023}) and recalling $ J^{'}(U(\tau)) = 0 $, we can immediately prove \eqref{Taylor_1} and \eqref{Taylor_2} with $ \gamma = 1$. 
    
    When $ 1 < p < 2 $, we choose 
    \[ X_1 : = \left\{ w \in \dhs : w U(\tau)^\frac{p-2}{2} \in L^{2 \cdot 2_* } (\R^N) \right\},  \] 
    where $ 2_* : = \frac{2N}{N+2s} $ equipped with the norm $ \| w \|_{X_1}^2 : = \| w \|_\dhs^2 + \| w U(\tau)^\frac{p-2}{2} \|_{L^{2 \cdot 2_* } (\R^N)}^2 $. Moreover, we let 
    \[ X_2 := \left\{ w \in \dhs : w U(\tau)^\frac{p-2}{3} \in L^3(\R^N) \right\} \] 
    be equipped with the norm $ \| w \|_{X_2}^3 := \| w \|_\dhs^3 + \| w U(\tau)^\frac{p-2}{3} \|_{L^3(\R^N)}^3 $. Arguing as in \cite[Section 7]{Akagi2023}, we can show $J^{''} : X_1 \to \mathcal{L}( X_1 , \dhms )$ is Gateaux differentiable at $ U_\theta(\tau) := (1-\theta) U(\tau) + \theta w(\tau) $ for $ \tau > \tau_1 $ and the Gateaux derivative of $J^{''}$ at $\phi_\theta$ is bounded in $\mathcal{L}^{(2)} ( X_1 , \dhms )$ for $ \theta \in [0,1]$. Similarly, $J^{''} : X_2 \to \mathcal{L}^{(2)}( X_2 , \R )$ is Gateaux differentiable at $U_\theta(\tau)$ in $X_2$ for any $ \theta \in [0,1]$ and its Gateaux derivative is bounded in $ \mathcal{L}^{(3)} (X_2,\R)$ for $ \theta \in [0,1]$. As in \cite[Section 7]{Akagi2023}, it follows that \eqref{err_term} holds with $\gamma = p-1$.  

    The proof for $U$ instead of $U(\tau)$ is identical. 
\end{proof}

Next, we state and prove two norm estimates needed in the proof of Theorem \ref{theorem exponential decay}.

The first of these estimates is a regularity result which allows to control the relative error through (a worse power of) the $\dhs$-distance. 

\begin{lemma}\label{rel_err_exponential}
Let $w$ solve \eqref{w equation} and let $U = U[0,1]$ be given by \eqref{U z lambda}. Then for every $\tau > 0$ large enough, we have 
\[
   \left\| \frac{w(\tau)}{U} - 1 \right\|_{L^\infty (\R^N)} \lesssim \| w(\tau) - U \|_\dhs^\frac{2}{N-2s+2}.
\]
\end{lemma}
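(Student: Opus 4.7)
My strategy is to transport the inequality to the compact manifold $\Sph$ via the inverse stereographic projection $\Ste$ from \eqref{ster proj definition}, where it reduces to a standard interpolation estimate. Writing $w(\tau, x) = v(\tau, \Ste(x)) B(x)$ as in \eqref{ster proj}, and recalling from \eqref{B to U} that $U = c_0 B$ for a positive constant $c_0$, the function $U$ corresponds on the sphere to the constant $v \equiv c_0$. Consequently
\[
\left\| \frac{w(\tau)}{U} - 1 \right\|_{L^\infty(\R^N)} = c_0^{-1} \|v(\tau) - c_0\|_{L^\infty(\Sph)},
\]
while by the conformal identity \eqref{conf trafo Hs norm} and the fact that the eigenvalues $\alpha(\ell)$ of $A_s$ satisfy $\alpha(\ell) \gtrsim (1+\ell)^{2s}$,
\[
\|w(\tau) - U\|_{\dhs}^2 = \lsc v(\tau) - c_0,\, A_s(v(\tau) - c_0)\rsc_{L^2(\Sph)} \gtrsim \|v(\tau) - c_0\|_{H^s(\Sph)}^2.
\]
Thus it suffices to prove
\[
\|v(\tau) - c_0\|_{L^\infty(\Sph)} \lesssim \|v(\tau) - c_0\|_{H^s(\Sph)}^{2/(N-2s+2)}.
\]

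For this I will invoke the interpolation inequality
\[
\|f\|_{L^\infty(\Sph)} \lesssim \|f\|_{C^1(\Sph)}^{(N-2s)/(N-2s+2)} \|f\|_{L^{2N/(N-2s)}(\Sph)}^{2/(N-2s+2)}
\]
on the compact $N$-dimensional manifold $\Sph$ (which can be established by covering $\Sph$ with finitely many coordinate charts and using the elementary ball argument: if $|f(x_0)|$ is close to $\|f\|_\infty$ and $\|\nabla f\|_\infty \leq L$, then $|f| \geq |f(x_0)|/2$ on a ball of radius $\sim |f(x_0)|/L$, which one integrates against the $L^p$-norm with $p = 2N/(N-2s)$), combined with the Sobolev embedding $H^s(\Sph) \hookrightarrow L^{2N/(N-2s)}(\Sph)$. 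The exponents are the ones forced by the scaling balance $(1-\theta)\cdot 1 + \theta(s - N/2) = 0$, giving $\theta = 2/(N-2s+2)$. Applying the inequality with $f = v(\tau) - c_0$ yields the desired estimate, provided one has the uniform-in-$\tau$ bound
\[
\|v(\tau) - c_0\|_{C^1(\Sph)} \leq C \qquad \text{for all } \tau \geq \tau_0.
\]

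Securing this uniform $C^1$ bound is the main obstacle. I rely on the uniform Lipschitz regularity estimates for positive solutions of the sphere-gauged version of \eqref{w equation} proved in \cite{Jin2014}, combined with the uniform convergence $v(\tau) \to c_0$ coming from \eqref{w/U to 0} (which in particular keeps $v(\tau)$ uniformly bounded away from $0$ and from $\infty$, so the nonlinearity $v^p$ is uniformly regular along the orbit). This is precisely the argument already used in \cite[Proof of Theorem 2.8, Step 4]{DeNitti2023} for the analogous statement appearing there, and I will simply appeal to it.
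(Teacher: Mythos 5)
Your proposal is correct and follows essentially the same route as the paper: pass to the sphere via stereographic projection, use the uniform Lipschitz bound on $v(\tau)$ from \cite{Jin2014} (for $s=1$ the paper instead cites \cite{MR1857048}), apply the $L^\infty$--$L^{\frac{2N}{N-2s}}$ interpolation through the $C^1$ norm (this is exactly \cite[Lemma 4.2]{DeNitti2023}, which the paper invokes directly), and conclude with the Sobolev embedding; note that your target exponent $\frac{2N}{N-2s}$ coincides with the paper's $p+1$.
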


This lemma is contained in \cite[Proof of Theorem 2.8, Step 4]{DeNitti2023}, but we sketch the proof for completeness.

\begin{proof}
Defining $v$ as in \eqref{ster proj} through stereographic projection \eqref{ster proj definition}, we have 
\[ \left\|\frac{w(\tau)}{U} - 1 \right\|_{L^\infty(\R^N)} = \alpha(0)^\frac{1}{1-p}  \left\|v(\tau) - \alpha(0)^\frac{1}{p-1} \right\|_{L^\infty(\mathbb S^N)}. \]
 Moreover, $v(\tau)$ is uniformly Lipschitz on $\mathbb S^N$ by \cite[eq. (4.2) and Proposition 5.1]{MR1857048} (for $s =1$), respectively \cite[Proof of Theorem 3.1 and Proposition 5.2]{Jin2014} (for $s \in (0,1)$). 
Since $\|\frac{w(\tau)}{U} - 1 \|_{L^\infty(\R^N)} \to 0$ as $\tau \to \infty$ by \eqref{w/U to 0}, the interpolation estimate \cite[Lemma 4.2]{DeNitti2023} is applicable and yields 
\[ \|v(\tau) - \alpha(0)^\frac{1}{p-1} \|_{L^\infty(\mathbb S^N)} \lesssim \|v(\tau) - \alpha(0)^\frac{1}{p-1} \|_{L^{p+1}(\mathbb S^N)}^\frac{2}{N-2s+2}. \]
Transforming back to $\R^N$ via \eqref{ster proj} and using Sobolev's inequality yields the lemma. 
\end{proof}

The second estimate permits to control the $\dhs$-distance, for which Theorem \ref{theorem exponential decay} is stated, in terms of the difference $J(w(\tau)) - J(U)$ which is being used in most of the proof.

\begin{lemma}\label{J-ctrl-rho}
Let $w$ solve \eqref{w equation}, let $J$ be given by \eqref{J definition} and let $U = U[0,1]$ be given by \eqref{U z lambda}. Then for every $\tau$ large enough, we have
\[ \|w(\tau) - U\|_\dhs \lesssim \left( J(w(\tau)) - J(U) \right)^\frac{1}{2}.\]
\end{lemma}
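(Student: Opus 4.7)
The plan is to Taylor expand $J$ around the modulated bubble $U(\tau)$ from Lemma \ref{lemma U(tau)} and then couple the resulting spectral form with the global monotonicity of $J$ along the flow. Set $\rho(\tau) := w(\tau) - U(\tau)$. Since $J(U(\tau)) = J(U)$ by the translation/scaling invariance of $J$, Lemma \ref{Taylor_for} gives
\[
J(w(\tau)) - J(U) = \tfrac{1}{2}\lsc \rho, \mathcal L_{U(\tau)}\rho\rsc_\ast + O(\|\rho\|_\dhs^{2+\gamma}).
\]
In the eigenbasis of $\mathcal L_{U(\tau)}$ supplied by Proposition \ref{proposition linearized operator}, write $\rho = \sigma_0 e_0(\tau) + \tilde\rho$; the components in the kernel $e_1, \ldots, e_{N+1}$ vanish by the $\dot H^s$-orthogonality coming from the minimality of $U(\tau)$. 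Setting $A := \sum_{j \geq N+2} (\nu_j/\mu_j)\sigma_j^2$, we have $\lsc \rho, \mathcal L_{U(\tau)}\rho\rsc_\ast = (1-p)\sigma_0^2 + A$, with $A \asymp \|\tilde\rho\|_\dhs^2$ by the spectral gap.

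The main obstacle is controlling the unstable coefficient $\sigma_0$ associated to $\nu_0 = 1 - p < 0$, which the modulation orthogonality does \emph{not} kill. A first bound comes from the monotonicity of $J$ (Lemma \ref{J decreasing}): since $J(w(\tau))$ is non-increasing and tends to $J(U)$, one has $J(w(\tau)) \geq J(U)$ for every $\tau$, and the Taylor expansion yields
\[
(p-1)\sigma_0(\tau)^2 \leq A(\tau) + O(\|\rho(\tau)\|_\dhs^{2+\gamma}).
\]
This estimate is, however, borderline: in the worst case $(p-1)\sigma_0^2 \approx A$ and the quadratic form degenerates. To improve it, I would exploit the Taylor expansion of $J'$ from Lemma \ref{Taylor_for}, projecting $J'(w) = \mathcal L_{U(\tau)}\rho + O(\|\rho\|_\dhs^{1+\gamma})$ onto $e_0(\tau)$ in the $L^2(\R^N, U(\tau)^{p-1}\diff x)$ inner product and applying Cauchy--Schwarz:
\[
(p-1)|\sigma_0(\tau)| \lesssim \|J'(w(\tau))\|_{L^2(\R^N, U(\tau)^{1-p}\diff x)} + O(\|\rho\|_\dhs^{1+\gamma}).
\]
By Lemma \ref{lemma easy ineq} the right-hand side is controlled by $\sqrt{-\partial_\tau J(w(\tau))}$, and once the subcritical exponential decay of $J(w(\tau)) - J(U)$ (Step 1 in the proof of Theorem \ref{theorem exponential decay}) is invoked these two quantities are comparable, so that $\sigma_0^2 \lesssim J(w(\tau)) - J(U)$ up to higher-order corrections. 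Inserting this back into $J(w(\tau)) - J(U) = \tfrac{1}{2}(A - (p-1)\sigma_0^2) + O(\|\rho\|^{2+\gamma})$ and absorbing, one concludes $\|\rho(\tau)\|_\dhs^2 \lesssim J(w(\tau)) - J(U)$ for $\tau$ large.

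Finally, to pass from $\|\rho\|_\dhs = \|w - U(\tau)\|_\dhs$ to $\|w - U\|_\dhs$, use the triangle inequality $\|w - U\|_\dhs \leq \|\rho\|_\dhs + \|U(\tau) - U\|_\dhs$ together with the Lipschitz dependence of $U[z,\lambda]$ on $(z,\lambda)$ (Lemma \ref{lemma U z lambda}(i)) and the minimality bound $\|U(\tau) - U\|_\dhs \leq 2\|w(\tau) - U\|_\dhs$, which together show that the parameter shift $|z(\tau)| + |\lambda(\tau) - 1|$ satisfies the same $J$-controlled estimate. The decisive difficulty is the $\sigma_0$-control: the spectral constraint $J \geq J(U)$ alone yields only the borderline bound $(p-1)\sigma_0^2 \leq A(1 + o(1))$, so its coupling with the dynamic energy identity $J'(w) = -\partial_\tau w^p$ through Lemma \ref{lemma easy ineq} is what actually produces the coercivity.
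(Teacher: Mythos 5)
Your strategy contains a genuine gap at its decisive step. To control the unstable coefficient $\sigma_0$ you claim that $\|J'(w(\tau))\|_{L^2(\R^N, U(\tau)^{1-p}\diff x)}^2$, equivalently $-\frac{\diff}{\diff\tau}J(w(\tau))$, is comparable to $J(w(\tau)) - J(U)$ once the subcritical decay from Step 1 is known. This is not justified: Lemmas \ref{lemma easy ineq} and \ref{lemma hard ineq} give only the chain $J(w(\tau)) - J(U) \lesssim \|J'(w(\tau))\|^2 \lesssim -\frac{\diff}{\diff\tau}J(w(\tau))$, which is the \emph{opposite} of the inequality you need, and an exponential upper bound on a monotone quantity gives no pointwise upper bound on its derivative (a decreasing function below $Ce^{-\mu\tau}$ may have arbitrarily large $-\partial_\tau$ on a sequence of times). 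In fact $\|J'(w)\|^2 \asymp \|\rho\|_\dhs^2$ near the critical manifold, and whether $\|\rho\|_\dhs^2 \lesssim J(w)-J(U)$ is exactly the content of the lemma, so the asserted comparability is circular. A second, smaller gap is the last step: from $\|w-U\|_\dhs \le \|\rho\|_\dhs + \|U(\tau)-U\|_\dhs$ and $\|U(\tau)-U\|_\dhs \le 2\|w-U\|_\dhs$ you cannot extract a bound on $\|w-U\|_\dhs$; you would need an independent quantitative control of $|z(\tau)|+|\lambda(\tau)-1|$ in terms of $J(w(\tau))-J(U)$, which your argument does not provide.

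The paper avoids both issues by Taylor expanding around $U$ itself (legitimate since $J'(U)=0$), so no modulation enters, and by handling the entire negative quadratic contribution $-\frac{p}{2}\int U^{p-1}|w-U|^2$ at once via the pointwise inequality $|U-w|\le U^{1-p}|U^p-w^p|$, which gives $\int U^{p-1}|w-U|^2 \le \|w^p-U^p\|^2_{L^2(\R^N, U^{1-p}\diff x)}$. The key device you are missing is then an \emph{integration in time}: since $\partial_\sigma(w^p) = -J'(w(\sigma))$ and Lemmas \ref{lemma easy ineq}--\ref{lemma hard ineq} yield $\|J'(w(\sigma))\|_{L^2(\R^N, U^{1-p}\diff x)} \le -C\frac{\diff}{\diff\sigma}\left(J(w(\sigma))-J(U)\right)^{1/2}$, integrating over $[\tau,\infty)$ gives $\|w^p(\tau)-U^p\|_{L^2(\R^N, U^{1-p}\diff x)} \le C\left(J(w(\tau))-J(U)\right)^{1/2}$. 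This converts the derivative bound into the energy-gap bound that your pointwise comparison cannot deliver, and inserting it into the expansion closes the proof. If you wanted to keep your spectral decomposition, you would have to run the analogous integration for $\sigma_0(\tau)$, which additionally requires differentiating the modulation parameters $z(\tau),\lambda(\tau)$ in time --- considerably more work than the paper's route.
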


\begin{proof}[Proof of Lemma \ref{J-ctrl-rho}]
    By the Taylor expansion from Lemma \ref{Taylor_for} (used here with basepoint $U$ instead of $U(\tau)$),
\begin{align}
    &\qquad J(w(\tau)) - J(U)  \nonumber \\
    &=  \frac{1}{2} \|w(\tau) - U \|^2_\dhs - \frac{p}{2} \int_\R U^{p-1} | w(\tau) - U |^2 ~\diff{x} + o(\|w(\tau) - U\|_\dhs^2)\nonumber \\
    &\geq \frac{1}{2} \| w(\tau) - U \|^2_\dhs - \frac{p}{2} \| w^p(\tau) - U^p \|^2_{  L^2( \R^N , U^{1-p} \diff{x} ) }+ o(\|w(\tau) - U\|_\dhs^2). \label{exp-ctrl-rho}
 \end{align} 
For the last estimate, we used the elementary inequality 
$$
| a - b | \leq a^{1-p} | a^p - b^p | \quad \text{for any $ a , b > 0 $ and $ p  \geq 1$,}
$$
with $a = U$ and $b = w(\tau)$.

By Lemmas \ref{lemma easy ineq} and \ref{lemma hard ineq}, we see that
\begin{align*}
    \left( J(w(\tau)) - J(U) \right)^\frac{1}{2} \| J^{'} (w(\tau)) \|_{  L^2( \R^N , U^{1-p} \diff{x} ) } &\leq C \| J^{'} (w(\tau)) \|_{  L^2( \R^N , U^{1-p} \diff{x} ) }^2 \\
    &\leq - C \frac{\mathrm{d}}{\mathrm{d}\tau} \left( J(w(\tau)) - J(U) \right),
\end{align*}
which yields
$$
\| J^{'} (w(\tau)) \|_{  L^2( \R^N , U^{1-p} \diff{x} ) } \leq - C \frac{\mathrm{d}}{\mathrm{d}\tau} \left( J(w(\tau)) - J(U) \right)^{1/2}.
$$
Recalling that $ \partial_\sigma (w^p) (\sigma) = - J^{'} (w(\sigma)) $, this gives
\begin{equation*}
    \begin{aligned}
        \| w^p(\tau) - U^p \|_{  L^2( \R^N , U^{1-p} \diff{x} ) } &\leq \int_\tau^\infty \| \partial_\sigma ( w^p )(\sigma) \|_{  L^2( \R^N , U^{1-p} \diff{x} ) } ~\mathrm{d}\sigma \\
        &\leq C \left(J(w(\tau)) - J(U) \right)^\frac{1}{2}.
    \end{aligned}
\end{equation*}
The lemma now follows from inserting this bound into \eqref{exp-ctrl-rho} and using that \\ $\lim_{\tau \to \infty}\|\rho(\tau)\|_\dhs=0$ by \eqref{w/U to 0} together with \cite[Proof of Theorem 2.8, Steps 1 and 2]{DeNitti2023}.
\end{proof}

The last lemma we state in this section contains some useful estimates concerning the family $U[z, \lambda]$ as $(z, \lambda) \to (0,1)$. These estimates are both elementary and expected. Since we did not find a suitable reference, we provide a short proof.

\begin{lemma}
    \label{lemma U z lambda}
    For $z \in \R^N$ and $\lambda > 0$, let $U[z, \lambda]$ be given by \eqref{U z lambda} and let $U:= U[0,1]$. As $z \to 0$ and $\lambda \to 1$, the following holds:
    \begin{enumerate}[(i)]
        \item $\|U[z, \lambda] - U\|_\dhs = (c+o(1)) (|z|^2 + |\lambda-1|^2)^\frac{1}{2} $, for some $c = c(N,s) > 0$. 
        \item $\left\|1 - \frac{U[z, \lambda]}{U}\right\|_{L^\infty(\R^N)} = \mathcal O(|\lambda-1| + |z|)$.     
    \end{enumerate}
\end{lemma}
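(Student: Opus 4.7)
The plan is to treat the two parts separately by direct computation, using some facts already established via the stereographic projection introduced in Section \ref{section linearized operator}.

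For (i), I will Taylor expand the map $(z,\lambda) \mapsto U[z,\lambda]$ at $(0,1)$ in $\dhs$. Differentiating the explicit formula \eqref{U z lambda} under the integral sign shows that this map is smooth into $\dhs$, and so
\begin{equation*}
U[z,\lambda] - U = \sum_{i=1}^N z_i \pzi U + (\lambda-1)\, \pl U + R(z,\lambda), \qquad \|R(z,\lambda)\|_\dhs = o\bigl( (|z|^2+|\lambda-1|^2)^{1/2} \bigr).
\end{equation*}
The key point is then to observe that the tangent vectors $\pzi U$ and $\pl U$ coincide, up to a \emph{common} positive multiplicative constant, with the functions $\omega_k \circ \mathcal S \cdot B$ from \eqref{ster proj}--\eqref{ster proj definition} attached to the degree-one spherical harmonics $\omega_1,\dots,\omega_{N+1}$ on $\Sph$. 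This is a short explicit computation: differentiating \eqref{U z lambda} gives $\pzi U(x) = (N-2s)\, x_i\, C_N (1+|x|^2)^{-\frac{N-2s}{2}-1}$ and a similar formula for $\pl U$, and one sees that the proportionality factor to $\omega_k \circ \mathcal S \cdot B$ is the same for all $k=1,\dots,N+1$. Since the $\omega_k$ are pairwise $L^2(\Sph)$-orthogonal with equal norms, the isometry \eqref{conf trafo Hs norm} implies that $\pzi U$ and $\pl U$ are pairwise $\dhs$-orthogonal with a common norm $\sqrt{c}$. Plugging this into the Taylor expansion yields $\|U[z,\lambda] - U\|_\dhs^2 = c(|z|^2 + |\lambda-1|^2) + o(|z|^2 + |\lambda-1|^2)$, and (i) follows by taking square roots.

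For (ii), I will reduce the problem to a purely algebraic estimate. Writing
\begin{equation*}
\frac{U[z,\lambda](x)}{U(x)} = f(x)^{\frac{N-2s}{2}}, \qquad f(x) := \lambda \cdot \frac{1 + |x|^2}{1 + \lambda^2 |x-z|^2},
\end{equation*}
and using that $r \mapsto r^{(N-2s)/2}$ is Lipschitz in a neighborhood of $r=1$, it suffices to prove $\|f - 1\|_{L^\infty(\R^N)} = \mathcal O(|\lambda-1| + |z|)$. A direct expansion of the numerator gives
\begin{equation*}
f(x) - 1 = \frac{(\lambda-1)(1 - \lambda |x|^2) + \lambda^2 (2 x \cdot z - |z|^2)}{1 + \lambda^2 |x-z|^2}.
\end{equation*}
For $(z, \lambda)$ close enough to $(0,1)$ one checks that $1 + \lambda^2|x-z|^2 \geq \tfrac{1}{2}(1 + |x|^2)$ uniformly in $x$, which bounds the first summand by $C|\lambda-1|$. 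The second summand is controlled by splitting $|x| \leq |x-z| + |z|$ and invoking the elementary inequality $2 \lambda |x-z| \leq 1 + \lambda^2 |x-z|^2$ to get a bound of order $|z|$. I expect the only non-routine step to be the identification $\|\pzi U\|_\dhs = \|\pl U\|_\dhs$ in (i): it is what produces a single constant $c$ independent of the direction of approach, and is most transparent via stereographic projection rather than by attempting a direct integration in $\R^N$.
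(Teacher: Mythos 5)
Your proposal is correct, but it follows a genuinely different route from the paper's proof in both parts. For (i), the paper never Taylor-expands the map $(z,\lambda)\mapsto U[z,\lambda]$ in $\dhs$; instead it uses the equation \eqref{U equation} to write $\|U[z,\lambda]-U\|_\dhs^2 = 2\|U\|_\dhs^2 - 2F(z,\lambda)$ with $F(z,\lambda)=\int_{\R^N}U[z,\lambda]^pU\,\diff x$, and then Taylor-expands the \emph{scalar} function $F$, whose Hessian at $(0,1)$ is a positive multiple of the identity. Your version expands the Banach-space-valued map and computes the Gram matrix of the tangent vectors via stereographic projection; the identification of $\pzi U$ and $\pl U$ with the degree-one harmonics $\omega_k\circ\mathcal S\cdot B$ with a \emph{common} constant is indeed correct (both constants equal $\tfrac{N-2s}{2^{(N-2s)/2+1}}$ times the normalization in \eqref{U z lambda}), and since the $\omega_k$ are $L^2(\Sph)$-orthogonal with equal norms and $A_s$ acts as the scalar $\alpha(1)$ on degree-one harmonics, the Gram matrix is $c\,\mathrm{id}$ as you claim. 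The one step you should tighten is the assertion that the remainder satisfies $\|R(z,\lambda)\|_\dhs = o((|z|^2+|\lambda-1|^2)^{1/2})$: Fréchet differentiability into $\dhs$ does not follow from ``differentiating under the integral sign'' alone, because the relevant norm is nonlocal. The cheapest rigorous justification is exactly the paper's device: every inner product $\lsc U[z,\lambda],U[z',\lambda']\rsc_\dhs = \int U[z,\lambda]^pU[z',\lambda']\,\diff x$ is a smooth function of the parameters, so the squared $\dhs$-norm of any difference quotient or remainder is an explicit smooth scalar function whose expansion can be read off. For (ii), your direct algebraic estimate of $f-1$ (splitting the numerator into a $(\lambda-1)$-part controlled by $1+\lambda^2|x-z|^2\gtrsim 1+|x|^2$ and a $z$-part controlled by $2\lambda|x-z|\le 1+\lambda^2|x-z|^2$) is correct and in fact simpler than the paper's argument, which analyzes the critical points of $F_{z,\lambda}=1/f-1$ and solves a cubic equation for the parameter $r$ in $z=ry$. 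Controlling $f-1$ versus $1/f-1$ is immaterial since $f\to1$ uniformly.
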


\begin{proof}       
    To prove \emph{(i)}, using equation \eqref{U equation} we write 
            \begin{equation}
                \label{U Hs norm}
                 \|U[z, \lambda] - U\|_\dhs^2 = 2 \|U\|^2_\dhs - 2 \int_{\R^N} U[z, \lambda]^p U \diff x,
            \end{equation}
                where $p = \frac{N+2s}{N-2s}$. Direct computation shows that the function 
    \[ F: \R^N \times (0, \infty) \to \R, \qquad (z, \lambda) \mapsto \int_{\R^N} U[z, \lambda]^p U \diff x \]
    satisfies 
    \[ F(0,1) = \int_{\R^N} U^{p+1} \diff x = \|U\|_\dhs^2, \quad \nabla_{(z, \lambda)} F(0,1) = 0, \quad D^2_{(z, \lambda)} F(0,1) = d \text{ id}, \]
    for some $d = d(N,s) > 0$; see \cite[Proof of Lemma 12.(d)]{Frank2023} for an equivalent computation on $\mathbb S^N$. From \eqref{U Hs norm}, a Taylor expansion of $F(z, \lambda)$ in $(0,1)$ gives the claim with $c = \sqrt d$. 

    To prove \emph{(ii)}, it is enough to show that 
    \begin{equation}
        \label{(ii) goal}
        \sup_{x \in \R^N} |F_{z, \lambda}(x)|  \lesssim |\lambda - 1| + |z|, \quad \text{ where }  \quad F_{z, \lambda}(x) = \frac{1 + \lambda^2 |x-z|^2}{\lambda( 1+ |x|^2)} - 1. 
    \end{equation} 
   Since $\lim_{|x| \to \infty} F_{z, \lambda}(x) = \lambda - 1$, we only need to show that every critical point $y \in \R^N$ of $F_{z, \lambda}$ verifies $F_{z, \lambda}(y) \lesssim |\lambda - 1| + |z|$. 

   The  critical point equation $\nabla F_{z,\lambda} (y) = 0$ is equivalent to 
    \begin{equation}
        \label{y crit point}
         (y - z) (1 + |y|^2) = y (\lambda^{-2} + |y-z|^2). 
    \end{equation}
    In particular, $z = ry$ for some $r = r(z, \lambda, y) \in \R$. Inserting \eqref{y crit point} back into $F_{z, \lambda}$ gives 
    \[ F_{z, \lambda}(y) = \lambda(1 - r) - 1 = (\lambda - 1) - \lambda r, \]
    so to conclude \eqref{(ii) goal} it remains to show that 
    \begin{equation}
        \label{r estimate}
         |r| \lesssim |\lambda - 1| + |z|. 
    \end{equation}
    To prove \eqref{r estimate} we may clearly assume $r \neq 0$. Inserting $y = \frac{1}{r} z$ into \eqref{y crit point}, taking the absolute value and rearranging terms, we arrive at 
    \begin{equation}
        \label{r equation}
        r^3 = r^2 (1-\lambda^{-2}) + (r - r^2) |z|^2. 
    \end{equation} 
    Clearly, $|r| \to \infty$ is impossible because the right side of \eqref{r equation} is a quadratic polynomial with bounded coefficients as $z \to 0$ and $\lambda \to 1$. Now from \eqref{r equation}, the fact that $r$ is bounded gives directly 
    \[ |r| \lesssim |\lambda - 1|^\frac{1}{3} + |z|^\frac{2}{3}.  \]
    Reinserting this once again into \eqref{r equation} easily yields \eqref{r estimate}. Thus \emph{(ii)} is proved. 
        \end{proof}

\section{Bounded domain case}

In this section we prove Theorem \ref{thm-nonnegative-bdd}. We follow the same strategy as in the proof of Theorem \ref{theorem exponential decay}. The most substantial part of this section will be devoted to proving uniform convergence of the relative error $\frac{w(\tau, \cdot)}{\varphi} - 1$, that is, Theorem \ref{theorem uniform_conv_rel_err}.

\subsection{Preliminaries}

\subsubsection{Different realizations of the fractional Dirichlet Laplacian}
\label{subsubsec laplacians}
We begin by discussing the various definitions of the fractional Laplacian $(-\Delta)^s$ with Dirichlet boundary conditions on $\Omega$. Our Theorems \ref{thm-nonnegative-bdd} and \ref{theorem uniform_conv_rel_err} hold for any of the following non-equivalent realizations of $(-\Delta)^s$ on $\Omega$. 
\begin{itemize}
    \item The \emph{restricted fractional Laplacian (RFL)} $(-\Delta)^s_\text{re}$, whose quadratic form is given by 
    \[ q_\text{re}(u) = \iint_{\R^N \times \R^N} \frac{(u(x) - u(y))^2}{|x-y|^{N+2s}} \diff x \diff y, \]
    for 
    \[ u \in H^s_\text{re}(\Omega) := \left\{ u \in H^s(\R^N) \, : \, u \equiv 0 \, \text{ on } \R^N \setminus \Omega \right\}. \]
    \item The \emph{censored fractional Laplacian (CFL)} $(-\Delta)^s_\text{ce}$, whose quadratic form is given by
    \[ q_\text{ce}(u) = \iint_{\Omega \times \Omega} \frac{(u(x) - u(y))^2}{|x-y|^{N+2s}} \diff x \diff y 
 \]
 for 
 \[ u \in H^s_\text{ce}(\Omega) := \left\{ u \in L^2(\Omega) \, : \, \iint_{\Omega \times \Omega} \frac{(u(x) - u(y))^2}{|x-y|^{N+2s}} \diff x \diff y  < \infty, \quad u = 0 \text{ on } \partial \Omega \right\}. \]
 For this operator, the Dirichlet boundary condition only makes sense when $s \in (\frac{1}{2}, 1)$, and we will always assume this condition implicitly in statements about $(-\Delta)^s_\text{ce}$. The equality $u = 0$ on $\partial \Omega$ is intended in the sense of traces, which is well-defined if $s > \frac{1}{2}$ (see, e.g., \cite[Proposition 4.5]{MR2944369}).
 
 \item The \textit{spectral fractional Laplacian (SFL)} $(-\Delta)^s_\text{sp}$ defined through the Dirichlet eigenvalues $(\lambda_k)_{k \in \N} \subset (0, \infty)$ of $-\Delta$ on $\Omega$ and the associated eigenfunctions $(\varphi_k)_{k \in \N} \subset H^1(\Omega)$ by the quadratic form
 \[ q_\text{sp}(u) := \sum_{k=1}^\infty \lambda_k^s |u_k|^2. \]
 for every $u = \sum_{k=1}^\infty u_k \varphi_k$ such that 
 \[ u \in H^s_\text{sp}(\Omega) := \left\{ u \in L^2(\Omega) \, :\, \sum_{k=1}^\infty \lambda_k^s |u_k|^2 < \infty \right\}. 
 \]
\end{itemize}
We refer to \cite[Section 2.2]{BoIbIs} for a more exhaustive discussion of these operators.

Here, we only record one property which we will crucially use in the following. Namely, each of the operators $(-\Delta)^s_\#$ admits a Green's function $\mathbb G_\#: \Omega \times \Omega \to \R$ satisfying the two-sided estimate \cite[eq.s (K3) and (K4)]{BoIbIs}
\begin{equation}
    \label{Greens estimate}
    \Phi_\#(x) \Phi_\#(y) \lesssim \mathbb G_\#(x,y) \lesssim \frac{1}{|x-y|^{N-2s}} \left( \frac{\Phi_\#(x)}{|x-y|^{\gamma_\#}} \land 1 \right) \left( \frac{\Phi_\#(y)}{|x-y|^{\gamma_\#}} \land 1 \right). 
\end{equation}
The placeholder $\#$ here is intended to stand for any of the subscripts $\{\text{re}, \text{ce}, \text{sp} \}$. The symbol $\Phi_\#$ denotes the lowest eigenfunction of the operator $(-\Delta)^s_\#$ with eigenvalue $\lambda_{1, \#} > 0$.  We always take $\Phi_\#$ to be nonnegative and, for definiteness, normalized to satisfy
$\|\Phi_\#\|_{L^2(\Omega)} = 1$. The values of the exponent $\gamma_\#$ is given by 
\begin{equation}
    \label{gamma values}
    \gamma_\text{re} = s, \qquad \gamma_\text{ce} = 2s-1 \, \text{ (when $s \in (1/2, 1)$)}, \qquad \gamma_\text{sp} = 1. 
\end{equation} 

The analysis of this section can probably be extended without great difficulty to more general diffusion operators whose Green's function satisfies \eqref{Greens estimate}, as done in \cite{BoIbIs}. In our discussion, we however restrict to the important case of $(-\Delta)^s_\#$ for concreteness.

For ease of notation, we shall in the following drop the subscript $\#$ whenever all cases are treated in a unified fashion and no confusion can arise. That is, we simply denote $(-\Delta)^s_\# = (-\Delta)^s$, $q_\# = q$, $\mathbb G_\# = \mathbb G$, $\Phi_\# = \Phi$, $\lambda_{1, \#} = \lambda_1$ and $\gamma_\# = \gamma$. 

To avoid confusion with the standard Sobolev spaces $H^s(\Omega)$ and following the notation from \cite{BoSiVa, BoIbIs}, we denote $H^s_\#(\Omega) = H(\Omega)$ and write $H^*(\Omega)$ for its dual space.

\subsubsection{Notions of solution}
\label{subsubsec notions of sol}

We recall here the notion of weak solution defined in \cite[Definition 4.1]{MR2954615} for problem \eqref{fde-bdd}.
\begin{definition} \label{weak-sol-def}
    A function $u$ is a \emph{weak solution} for problem \eqref{fde-bdd} if:
    \begin{itemize}
        \item $u \in C([0,\infty) : L^1(\Omega)) $ and $|u|^{m-1} u \in L^2_{loc}( ( 0,\infty ) : H(\Omega)) $;
        \item 
        $\int_0^\infty \int_\Omega u \frac{\partial \varphi}{\partial t} \diff x\diff t - \int_0^\infty q(|u|^{m-1} u, \varphi) \diff t = 0$
         for every $\varphi \in C_0^1 ( (0,\infty) \times \Omega)$ (where $q$ is the quadratic form associated to $(-\Delta)^s$);
        \item $ u(\cdot,0) = u_0 $ almost everywhere in $\Omega$.
    \end{itemize}
    A weak solution is called a \emph{strong solution} if $\partial_t u \in L^\infty ( (\tau,\infty) : L^1(\Omega))$ for every $\tau > 0$.
\end{definition}The existence of a weak solution to \eqref{fde-bdd} for SFL and RFL is proved in \cite[Theorem 2.2]{BoSiVa}. In this paper, the authors defined the concept of weak dual solution, expressed in terms of the problem involving the inverse of the fractional Laplacian, and work in this abstract framework. Moreover, according to the remarks after \cite[Proposition 3.1]{BoSiVa}, the weak dual solution obtained in \cite[Theorem 2.2]{BoSiVa} by using the celebrated theory of Brezis \cite{Brezis_1971,MR348562} and Komura \cite{MR216342} on maximal monotone operators is moreover unique and in fact is also a bounded weak solution in the sense of Definition \ref{weak-sol-def}. Furthermore, by comparison principle, $u_0 \geq 0$ implies $u(t) \geq 0$ for all $t \geq 0$ and moreover, this nonnegative solution is also strong in the sense of Definition \ref{weak-sol-def}. In the recent paper \cite{BoIbIs} these results and observations have been extended to the CFL and more general operators; see in particular \cite[Theorems 2.4 and 2.5]{BoIbIs}.

\subsection{A global Harnack principle}

This section serves as a preparation for the proof of Theorem \ref{theorem uniform_conv_rel_err}. We establish the following global Harnack principle.
It extends \cite[Theorem 8.1]{BoVa2015}, which is valid for diffusion exponents $m>1$, to the fast diffusion range $m < 1$. Recall that $ \Phi \in H(\Omega) $ is the first eigenfunction of $(-\Delta)^s$ on $\Omega$ with eigenvalue $\lambda_1 > 0$.

\begin{proposition}[Global Harnack Principle]\label{GHP}
    For $ m \in (\frac{N-2s}{N+2s},1)$, let $u$ be a weak solution to \eqref{fde-bdd} with inital value $u_0$ satisfying \eqref{u0 condition bounded} and extinction time $ T_* = T_*(u_0) > 0 $. 
    Then there exist a time $ t_* \in (0, T_*)$ and universal constants $C_0, C_1 > 0$ such that
    \begin{equation}\label{GHP-ine}
    C_0 (T_*-t)^\frac{m}{1-m} \leq \frac{u^m(t,x)}{\Phi(x)} \leq C_1 (T_*-t)^\frac{m}{1-m}
    \end{equation}
    for all $ t \in (t_*,T_*] $ and $x \in \Omega$. 
\end{proposition}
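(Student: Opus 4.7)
The plan is to pass to the self-similar rescaling \eqref{w in terms of u bdd}. Using $p = 1/m$ and $m/(1-m) = 1/(p-1)$, the bound \eqref{GHP-ine} is equivalent to the time-independent two-sided estimate
\begin{equation*}
c_0 \Phi(x) \leq w(\tau, x) \leq c_1 \Phi(x) \qquad \text{for every } \tau \geq \tau_*, \; x \in \Omega,
\end{equation*}
on the rescaled solution $w$ of \eqref{w equation-bdd}. The entire proof will be carried out in this form.

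For the upper bound, I would first invoke the $L^q$-$L^\infty$ smoothing effect for \eqref{fde-bdd} proved in \cite{BoSiVa, BoIbIs}, which under the assumption \eqref{u0 condition bounded} yields a uniform bound $\|w(\tau)\|_{L^\infty(\Omega)} \leq M$ for all $\tau$ beyond some $\tau_1$. Rewriting \eqref{w equation-bdd} as the elliptic identity
\begin{equation*}
w(\tau, x) = \int_\Omega \mathbb{G}(x, y) \bigl(w^p - \partial_\tau w^p\bigr)(\tau, y) \diff y,
\end{equation*}
where $\mathbb{G}$ denotes the Green's function of $(-\Delta)^s$ on $\Omega$, a B\'enilan-Crandall type monotonicity estimate controls $|\partial_\tau(w^p)|$ by a constant multiple of $w^p$, so the integrand is uniformly bounded. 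Applying the upper half of \eqref{Greens estimate} together with the elementary comparison $\int_\Omega \mathbb{G}(x, y) \diff y \asymp \Phi(x)$ (which follows from boundary regularity for $(-\Delta)^s v = 1$ with Dirichlet boundary data) then yields $w(\tau, x) \leq c_1 \Phi(x)$.

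For the lower bound, using the lower half $\mathbb{G}(x, y) \gtrsim \Phi(x) \Phi(y)$ of \eqref{Greens estimate} in the same representation gives
\begin{equation*}
w(\tau, x) \gtrsim \Phi(x) \int_\Omega \Phi(y) w^p(\tau, y) \diff y - \Phi(x) \int_\Omega \Phi(y) \partial_\tau (w^p)(\tau, y) \diff y,
\end{equation*}
and the task reduces to a uniform positive lower bound on $\int_\Omega \Phi \, w^p$ together with smallness of the correction. For the main term I would propagate positivity: the strong positivity results in \cite{BoSiVa, BoIbIs} give $w(\tau_1) \geq c_3 > 0$ on some compact set $K \Subset \Omega$, and a stationary subsolution of $(-\Delta)^s v \leq v^p$ built out of the profile $\varphi$ can be arranged to fit under $w$ on $K$; the comparison principle for \eqref{w equation-bdd} then propagates this bound to all $\tau \geq \tau_1$, and integrating against $\Phi$ yields the required uniform lower bound. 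The correction term is absorbed using the energy dissipation structure of \eqref{w equation-bdd} (the analogue for bounded domains of Lemma \ref{J decreasing}), which places $\partial_\tau w^{(p+1)/2}$ in $L^2((0, \infty) \times \Omega)$ and therefore makes the $\partial_\tau(w^p)$ integral small along a sequence of times $\tau_k \to \infty$; parabolic regularity fills in the gap between these times.

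The main obstacle I anticipate is the quantitative propagation of positivity. Because local Harnack principles for fractional evolutions of the form \eqref{w equation-bdd} are substantially weaker than in the local case, the stationary subsolution must be tailored to the correct boundary behavior $\Phi(x) \sim \dist(x, \partial \Omega)^\gamma$ with $\gamma$ given by \eqref{gamma values}, and the chosen realization of $(-\Delta)^s$ enters the construction through this exponent. This is precisely the step where the fine Green's function estimates \eqref{Greens estimate} are used in an essential, rather than merely cosmetic, way.
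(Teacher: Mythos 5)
Your general strategy --- rewrite the equation as an elliptic identity against the Green's function, use the two-sided bounds \eqref{Greens estimate}, and control the time derivative by B\'enilan--Crandall --- is the same skeleton as the paper's proof, and your reformulation $c_0\Phi \le w(\tau,\cdot)\le c_1\Phi$ is the right one. However, there are two genuine gaps. First, the ``elementary comparison'' $\int_\Omega \mathbb G(x,y)\,\diff y \asymp \Phi(x)$ is precisely the delicate point and is not what the estimates \eqref{Greens estimate} deliver: integrating the upper bound in \eqref{Greens estimate} only yields $\|\mathbb G(x,\cdot)\|_{L^1(\Omega)}\lesssim \mathcal B_1(\Phi(x))$ as in \eqref{B1Phi}, which is $\Phi(x)^{2s}$ for $2s<1$ and $\Phi(x)|\log\Phi(x)|$ for $2s=1$, and for the spectral fractional Laplacian with $s\le \tfrac12$ the torsion function genuinely does \emph{not} behave like $\Phi$ near $\partial\Omega$. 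So your one-shot upper bound fails for $2s\le 1$. The paper closes this by a bootstrap: insert the weaker bound $u\lesssim (T_*-t)^{1/(1-m)}\Phi^{\mu}$ back into the Green's representation, use H\"older together with $\int_\Omega \Phi(y)\mathbb G(x,y)\,\diff y=\lambda_1^{-1}\Phi(x)$ to strictly increase $\mu$ at each step (at the cost of shrinking the time interval), and iterate finitely many times until $\mu\ge 1$. Second, B\'enilan--Crandall is one-sided ($u_t\le \frac{u}{(1-m)t}$, i.e.\ an \emph{upper} bound on $\partial_\tau w^p$); it does not control $|\partial_\tau(w^p)|$, and the direction it provides is the wrong one for upper-bounding $w$ in the identity $w=\int\mathbb G(w^p-\partial_\tau w^p)$. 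The paper avoids any pointwise two-sided bound on $\partial_\tau w^p$ by using the time-\emph{integrated} pointwise estimates of \cite[Lemma 3.4]{BoIbIs} (monotonicity of $t\mapsto u(t,x)t^{-1/(1-m)}$), in which the unfavorable term $-u(t_1,y)$ is simply dropped by positivity.

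Your lower-bound argument also needs repair. Even with the correct sign of B\'enilan--Crandall one only gets $\partial_\tau w^p\le 2w^p$ for large $\tau$, so $w^p-\partial_\tau w^p\ge -w^p$ and the pointwise representation gives nothing; and getting smallness of $\int\Phi\,\partial_\tau(w^p)$ only along a sequence $\tau_k\to\infty$ with ``parabolic regularity filling the gap'' is not a proof for this singular nonlocal flow. The subsolution construction is moreover in danger of circularity: to fit a stationary subsolution under $w(\tau_1)$ on all of $\Omega$ (not just on $K\Subset\Omega$) you need a boundary lower bound on $w(\tau_1)$, which is part of what is being proved. The paper's route is both simpler and quantitative: from $\mathbb G(x,y)\gtrsim\Phi(x)\Phi(y)$ the lower bound reduces to $\int_\Omega u(t,y)\Phi(y)\,\diff y\gtrsim (T_*-t)^{1/(1-m)}$, which follows by integrating the exact identity $\frac{\diff}{\diff t}\int_\Omega u\Phi=-\lambda_1\int_\Omega u^m\Phi$ from $t$ to $T_*$, estimating the right side from below using the already-established upper bound together with the $L^{1+m}$-decay lower bound of \cite[Proposition 4]{BoIbIs}. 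I recommend restructuring your proof along these lines: the upper bound must come first (with the iteration for $2s\le1$), and the lower bound is then derived from it rather than from a comparison principle.
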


By applying the change of variables \eqref{w in terms of u}, we deduce the following estimate for solutions $w$ to \eqref{w equation-bdd}. 

\begin{corollary}[Global Harnack Principle for $w$]
    \label{corollary harnack w}
    For $ m \in (\frac{N-2s}{N+2s},1)$, let $u$ be a weak solution to \eqref{fde-bdd} with inital value $u_0$ satisfying \eqref{u0 condition bounded} and extinction time $ T_* = T_*(u_0) > 0 $. 
Let $w$ be the solution of \eqref{w equation-bdd} associated to $u$ through \eqref{w in terms of u bdd} and \eqref{change of variables t tau}. Let $\varphi$ be any solution to \eqref{stationary equation-bdd}. Then there exists a time $\tau_* > 0$ and constants $\tilde{C_0}, \tilde{C_1} > 0$ such that 
    \begin{equation}
        \label{w/varphi harnack}
        \tilde{C_0} \leq \frac{w(\tau, x)}{\varphi(x)} \leq \tilde{C_1} 
    \end{equation} 
for every $\tau \in [\tau_*, \infty)$ and every $x \in \Omega$. 
\end{corollary}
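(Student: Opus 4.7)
\textbf{Proof plan for Corollary \ref{corollary harnack w}.} The strategy is to translate the bound \eqref{GHP-ine} from Proposition \ref{GHP} into the new variables via \eqref{w in terms of u bdd} and \eqref{change of variables t tau}, and then compare $\Phi$ with $\varphi$.

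First, I would unravel the change of variables. Since $p = 1/m$ and $\frac{m}{1-m} = \frac{1}{p-1}$, relation \eqref{w in terms of u bdd} gives
\[ u^m(t,x) = u(t,x)^{1/p} = \left(\frac{p-1}{p}\right)^{\frac{m}{1-m}}(T_*-t)^{\frac{m}{1-m}} w(\tau,x). \]
Inserting this into \eqref{GHP-ine} and cancelling the common factor $(T_*-t)^{m/(1-m)}$, one obtains constants $c_0, c_1 > 0$ such that
\begin{equation}
\label{w/Phi proof plan}
 c_0 \, \Phi(x) \leq w(\tau,x) \leq c_1 \, \Phi(x)
\end{equation}
for every $x \in \Omega$ and every $\tau \geq \tau_*$, where $\tau_* > 0$ corresponds to $t_*$ from Proposition \ref{GHP} via \eqref{change of variables t tau}.

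Second, I would establish that $\varphi \asymp \Phi$ uniformly on $\Omega$. For the lower bound, one uses the Green's function representation $\varphi(x) = \int_\Omega \mathbb{G}(x,y) \varphi(y)^p \diff y$ together with the left-hand inequality in \eqref{Greens estimate} to get
\[ \varphi(x) \geq \Phi(x) \int_\Omega \Phi(y)\, \varphi(y)^p \diff y \gtrsim \Phi(x), \]
since $\varphi > 0$ in $\Omega$ (so the integral is a strictly positive constant). For the matching upper bound $\varphi(x) \lesssim \Phi(x)$, one invokes the boundary behavior of solutions to the stationary equation \eqref{stationary equation-bdd}; this is by now a standard consequence of the Green's function estimate \eqref{Greens estimate} together with the boundedness of $\varphi$ (the latter follows, e.g., from \cite{BoSiVa, BoIbIs} since $\varphi$ is the limit in $H(\Omega)$ of the bounded family $w(\tau,\cdot)$ and is known to be bounded by elliptic regularity for the operators $(-\Delta)^s_\#$ considered here).

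Finally, combining \eqref{w/Phi proof plan} with the two-sided bound $\varphi \asymp \Phi$ yields \eqref{w/varphi harnack} with $\tilde{C}_0 = c_0/\sup_\Omega(\varphi/\Phi)$ and $\tilde{C}_1 = c_1/\inf_\Omega(\varphi/\Phi)$. The main technical point is the comparison $\varphi \asymp \Phi$; once this is in hand, the reduction to Proposition \ref{GHP} is a purely algebraic computation.
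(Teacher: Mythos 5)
Your overall plan is correct and Step 1 coincides with the paper's: unravelling \eqref{w in terms of u bdd} (using $\tfrac{m}{1-m}=\tfrac{1}{p-1}$) turns \eqref{GHP-ine} into $c_0\Phi \leq w(\tau,\cdot)\leq c_1\Phi$ for $\tau\geq\tau_*$, and everything then reduces to the comparison $\varphi\asymp\Phi$.

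Where you diverge from the paper is in how $\varphi\asymp\Phi$ is obtained, and this is where your argument has a soft spot. The paper's trick is to apply Proposition \ref{GHP} \emph{once more}, this time to the exact separable solution $U_{T_*,\varphi}$ of \eqref{fde-bdd} built from $\varphi$; since $U_{T_*,\varphi}^m = c\,(T_*-t)^{m/(1-m)}\varphi$, the bound \eqref{GHP-ine} instantly yields $\varphi\asymp\Phi$ with no further work, reusing in particular the iteration already carried out in Step 2 of that proof. Your lower bound $\varphi\gtrsim\Phi$ via $\varphi(x)=\int_\Omega \mathbb G(x,y)\varphi(y)^p\,\diff y$ and the left inequality in \eqref{Greens estimate} is fine. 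But your upper bound is not the routine consequence you claim: plugging the right-hand side of \eqref{Greens estimate} into the Green representation gives $\varphi(x)\lesssim \Phi(x)\int_\Omega|x-y|^{-(N-2s+\gamma)}\diff y$, and this integral diverges unless $\gamma<2s$. By \eqref{gamma values} this fails for the spectral fractional Laplacian when $s\leq\tfrac12$; in that regime a first pass only gives $\varphi\lesssim\Phi^{2s}$ (or $\Phi|\log\Phi|$ at $s=\tfrac12$), and one must bootstrap exactly as in Step 2 of the proof of Proposition \ref{GHP}. So either you carry out that iteration for the stationary equation \eqref{stationary equation-bdd} as well, or you cite a genuine elliptic boundary-behaviour result for all three realizations --- the paper's device of feeding $U_{T_*,\varphi}$ back into Proposition \ref{GHP} sidesteps both. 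With that point repaired, your proof gives \eqref{w/varphi harnack} as stated.
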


\begin{proof}
    The estimate \eqref{w/varphi harnack}, but for the ratio $\frac{w}{\Phi}$ instead of $\frac{w}{\varphi}$, follows directly from \eqref{GHP-ine} and \eqref{w in terms of u bdd}. By applying once again Proposition \ref{GHP} to the function $u$ associated to the stationary solution $w(\tau, x) = \varphi(x)$, it also follows that also $\frac{\varphi}{\Phi}$ is bounded from above and below by positive constants. Combining these two estimates, \eqref{w/varphi harnack} follows. 
\end{proof}

Our proof follows the arguments from \cite{BoVa2015}, which we adapt to $m < 1$ by using some estimates proved recently in \cite{BoIbIs}.

\begin{proof}[Proof of Proposition \ref{GHP}]

\textit{Step 1: Preliminaries. }
By fundamental pointwise estimates proved in \cite[Lemma 3.4]{BoIbIs}, we have that for all $0 \leq t_0 \leq t_1$,
\begin{equation}\label{fund-point-inequ}
\begin{aligned}
    u^m(t_1,x) &\leq \frac{t_1^\frac{m}{1-m}}{1-m} \int_\Omega \frac{u(t_0,y) - u(t_1,y)}{t_1^\frac{1}{1-m} - t_0^\frac{1}{1-m}} \mathbb G(x,y) \diff{y} \\
    &\leq \frac{1}{1-m} \frac{t_1^\frac{m}{1-m}}{t_1^\frac{1}{1-m} - t_0^\frac{1}{1-m}} \| u(t_0) \|_{L^\infty(\Omega)} \|\mathbb{G}(x,\cdot)\|_{L^1(\Omega)}. 
\end{aligned}
\end{equation}
Indeed, the statement from \cite[Lemma 3.4]{BoIbIs} is valid for bounded and nonnegative gradient flow solutions. The weak solutions we consider belong to this category by the discussion following Definition \ref{weak-sol-def}. 

For all $ t \geq \frac{T_*}{2} $ we choose $ t_0 = t - \frac{T_*-t}{2} \in [\frac{T_*}{4}, T_*]$ and $t_1 = t $. With this choice, $t - t_0 = \frac{T_*-t}{2} $ and $ T_*-t_0 = 3 \frac{T_*-t}{2} $.
Using $u(t_1, \cdot) \geq 0$, we get
\begin{equation}\label{u-m-upper-bound-est-prelim}
u^m(t,x) \leq C (T_*-t)^{-1} \int_{\Omega} u(t_0, y) \mathbb G(x,y)  \diff y \qquad \text{ for all } t \in [\frac{T_*}{2}, T_*].
\end{equation}

On the other hand, using the smoothing effects \cite[Theorem 2.6]{BoIbIs} together with the decay of the $L^{m+1}$ norm \cite[Proposition 4]{BoIbIs} we obtain, for all $t_0 \in [\frac{T_*}{4},T_*]$,
    \begin{equation}\label{L-infty-est}
    \| u(t_0) \|_{L^\infty(\Omega)} \leq C (T_*-t_0)^\frac{1}{1-m}.
    \end{equation}

Moreover, from \eqref{Greens estimate} it can be deduced (see, e.g., \cite[Lemma 3.3]{BoIbIs}) that
\begin{equation}
    \label{Green-L1-est}
    \|\mathbb{G}(x,\cdot)\|_{L^1(\Omega)} \leq C \mathcal{B}_1(\Phi(x))
\end{equation}
with
\begin{equation}
\label{B1Phi}
    \mathcal{B}_1(\Phi(x)) = 
    \left\{
    \begin{aligned}
        &\Phi(x), \quad \text{for any}~ 2s > 1, \\
        &\Phi(x) |\log \Phi(x)|, \quad \text{for} ~ 2s = 1,\\
        &\Phi(x)^{2s}, \quad \text{for any}~ 2s < 1.
    \end{aligned}
    \right.
\end{equation}
Inserting \eqref{L-infty-est} and \eqref{Green-L1-est} into \eqref{u-m-upper-bound-est-prelim} yields
\begin{equation}
    \label{u-m-upper-bound-est}
    u^{m}(t,x) \leq C \mathcal B_1(\Phi(x)) (T_* - t)^\frac{m}{1-m} \qquad \text{ for all } t \geq \frac{T_*}{2}.
\end{equation}

\textit{Step 2. The upper bound. }

We can now prove the upper bound from \eqref{GHP-ine}. Because of \eqref{B1Phi}, we need to treat three different cases depending on the value of $s$.

\underline{Case (i): $s > \frac{1}{2}$.} In this case, $\mathcal B_1(\Phi(x)) = \Phi(x)$, and so \eqref{u-m-upper-bound-est} is already  the desired upper bound on $\frac{u^m}{\Phi}$ from \eqref{GHP-ine}. 

\underline{Case (ii): $s < \frac{1}{2}$.} In this case, $\mathcal B_1(\Phi(x)) = \Phi(x)^{2s}$, so \eqref{u-m-upper-bound-est} is weaker than \eqref{GHP-ine}. However, we are able to improve \eqref{u-m-upper-bound-est} through an iterative procedure inspired by the proof of \cite[Lemma 11.2]{BoVa2015}.

Indeed, \eqref{u-m-upper-bound-est} reads, in this case,
\begin{equation}\label{u-upper-bound}
    u(t,x)  \leq C (T_*-t)^\frac{1}{1-m} \Phi^{\mu_1} (x) \qquad \text{ for } t \in [\frac{T_*}{2},T_*] ,
\end{equation}
with $\mu_1 = \frac{2s}{m}$. 

If $\mu_1 \geq 1$, then we insert \eqref{u-upper-bound} with $t_0$ back into \eqref{u-m-upper-bound-est-prelim}, to find 
\begin{equation}
    \label{mu geq 1}
    u^m(t,x) \leq C (T_*-t)^\frac{m}{1-m} \int_\Omega \Phi(y) \mathbb G(x,y) \diff y =  C \lambda_1^{-1} (T_*-t)^\frac{m}{1-m} \Phi(x). 
\end{equation} 
 The application of \eqref{u-upper-bound} with $t_0$ is valid here for every $t$ such that $t_0 = t - \frac{T_* - t}{2} \geq \frac{T_*}{2}$. So we have proved \eqref{GHP-ine} for every $t \geq \frac{2}{3} T_*$. 

 If $\mu_1 < 1$, we can still use the same idea to iteratively improve the exponent $\mu_1$ in \eqref{u-upper-bound}. To achieve this, we insert again the bound \eqref{u-upper-bound} at time $t_0$ into \eqref{u-m-upper-bound-est-prelim}. Applying Hölder's inequality and \eqref{Green-L1-est} then gives
$$
\begin{aligned}
u^m(t,x) &\leq C (T_*-t)^\frac{1}{1-m} \int_\Omega \Phi^{\mu_1}(y) \mathbb G(x,y) \diff{y} \\
&\leq C (T_*-t)^\frac{1}{1-m} \left( \int_\Omega \mathbb G (x,y) \diff{y} \right)^{1-\mu_1} \left( \int_\Omega \Phi(y) \mathbb G (x,y) \diff{y} \right)^{\mu_1} \\
&\leq C (T_*-t)^\frac{1}{1-m} \Phi(x)^{2s(1-\mu_1) + \mu_1}.
\end{aligned}
$$
That is, we have improved \eqref{u-upper-bound} to 
$$
u(t,x) \leq C (T_*-t)^\frac{1}{1-m}   \Phi(x)^{\mu_2} \qquad \text{ for all } t \geq \frac{2}{3} T_*
$$
with $\mu_2 = \frac{2s(1 - \mu_1) + \mu_1}{m} \geq \mu_1$. Iterating this process for as long as $\mu_k < 1$ yields a sequence $(\mu_n)$, defined recursively by $\mu_{n+1} = \frac{2s(1 - \mu_n) + \mu_n}{m}$ such that
\[ 
u(t,x) \leq C (T_*-t)^\frac{1}{1-m}  \Phi(x)^{\mu_n} \qquad \text{ for all } t \in [q_n T_*, T_*]. 
\]
Here, the sequence $(q_n)$ is defined recursively by $q_1 = \frac{1}{2}$ and $q_{n+1} = \frac{2}{3}(q_n + \frac{1}{2})$. It is easy to see that $(q_n)$ is strictly increasing with $\lim_{n \to \infty} q_n = 1$. Since the ratio $\frac{\mu_{n+1}}{\mu_n}$ is bounded from below by $\frac{1}{m} > 1$, after a finite number $K-1$ of iteration steps we obtain $\mu_K \geq 1$. Repeating now once more the estimate from \eqref{mu geq 1} gives \eqref{GHP-ine}, for $t \geq q_{K+1} T_*$. 

\underline{Case (iii): $s =\frac{1}{2}$.} 
In this case, \eqref{u-m-upper-bound-est} reads 
\[  u(t,x) \leq C (T_*-t)^\frac{1}{1-m} \Phi (x) |\log \Phi(x)| \leq C (T_*-t)^\frac{1}{1-m} \Phi^{\frac{1-\eps}{m}} (x) \qquad \text{ for } t \in \left[\frac{T_*}{2},T_* \right], \]
for every $\eps > 0$. Since $m < 1$, we can choose $\eps$ so small that $\frac{1- \eps}{m} \geq 1$. Now we can argue as in the first part of Step 2 to deduce \eqref{GHP-ine} for every $t \geq \frac{2}{3} T_*$.

\textit{Step 3. The upper bound implies the lower bound. }

By \cite[Lemma 3.4]{BoIbIs}, we have for all $t \in [0,T_*] $ and a.e. $ x \in \Omega $
\begin{equation}\label{low-bound-prelim}
    \frac{u^m(t,x)}{t^\frac{m}{1-m}} \geq \frac{1}{1-m} \int_\Omega \frac{u(t,y)}{T_*^\frac{1}{1-m} - t^\frac{1}{1-m}} \mathbb G(x,y) \diff{y}.
\end{equation}
Using that  $ \mathbb G (x,y) \geq c_0 \Phi(x) \Phi(y) $ by \eqref{Greens estimate}, this implies 
\begin{equation}
    \label{low-bound-prelim-2}
   u^m(t,x) \geq c \Phi(x) (T_* - t)^{-1} \int_\Omega u(t,y) \Phi(y) \diff{y} \qquad \text{ for all } t \in [\frac{T_*}{2}, T_*]. 
\end{equation}
for some $c > 0$ (which may change from line to line). To bound the remaining integral, we estimate, for all $ t \in [0,T_*] $,
\begin{align}
    \frac{\diff}{\diff{t}} \int_\Omega u(t,y) \Phi(y) \diff{y} &= -\lambda_1 \int_\Omega u^m(t,y) \Phi(y) \diff{y} \nonumber \\
    &\leq - \lambda_1 \frac{1}{ \| \frac{u^m(t)}{\Phi} \|_{L^\infty(\Omega)}} \frac{1}{\| u(t)\|_{L^\infty(\Omega)}^{1-m}} \int_\Omega u^{1+m}(t,y) \diff{y}. \label{diff-integ-est}
\end{align}
By the upper bound we proved in Step 2, we have 
\[ \left\|\frac{u^m(t)}{\Phi}\right\|_{L^\infty(\Omega)} \leq C (T_* - t)^\frac{m}{1-m},  \quad \|u(t)\|_{L^\infty(\Omega)} \leq C (T_* - t)^\frac{1}{1-m} \quad \text{ for all }  t \in (t_*, T_*]. 
\]
Moreover, $\|u(t)\|_{L^{1+m}(\Omega)} \geq c (T_* - t)^\frac{1}{1-m}$ by \cite[Proposition 4]{BoIbIs} for all $t \in [0, T_*]$. Inserting these estimates into \eqref{diff-integ-est}, we get    
\begin{equation}
    \frac{\diff}{\diff{t}} \int_\Omega u(t,y) \Phi(y) \diff{y} \leq - c (T_*-t)^\frac{m}{1-m} \qquad \text{ for all } t \in (t_*,T_*].
\end{equation}
Integration over $[t, T_*]$ yields 
\begin{equation}
    \int_\Omega u(t,y) \Phi(y) \diff{y} \geq c (T_*-t)^\frac{1}{1-m} \qquad \text{ for all } t \in (t_*,T_*].
\end{equation}
Inserting this inequality into \eqref{low-bound-prelim-2}, we deduce 
\begin{equation}
    u^m(t,x) \geq c \Phi(x) (T_*-t)^\frac{m}{1-m} \qquad \text{ for all } t \in (t_*,T_*].
\end{equation}
This is the desired lower bound from \eqref{GHP-ine}. 
\end{proof}

\subsection{Proof of Theorem \ref{theorem uniform_conv_rel_err}}

In this section, we complete the proof of Theorem \ref{theorem uniform_conv_rel_err}. We obtain Theorem \ref{theorem uniform_conv_rel_err} from the following energy estimate in the spirit of \cite{Bonforte2021}. 

\begin{proposition}\label{proposition weighted smoothing effects-bdd}
    For $w$ a solution to \eqref{w equation-bdd} and $\varphi$ a solution to \eqref{stationary equation-bdd}, let $\mathrm{h}(\tau, x)  = \frac{w(\tau,x)}{\varphi(x)} - 1$ be the relative error function as defined in \eqref{relative-error-def}. Then there exists $C >0$ such that for any $\tau$ large enough,
    \begin{equation}
        \| \mathrm{h}(\tau, \cdot) \|_{L^\infty(\Omega)} \leq C \left( \sup_{\sigma \in [\tau - 1, \infty)} \|w(\sigma) - \varphi\|_{H(\Omega)} \right)^\frac{s}{N+\gamma},
    \end{equation}
    where $\gamma$ is defined by \eqref{gamma values}. 
\end{proposition}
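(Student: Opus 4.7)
The plan is to derive a PDE satisfied by the relative error $\mathrm{h}$ and carry out a weighted De Giorgi--Nash--Moser iteration, leveraging the global Harnack principle (Corollary \ref{corollary harnack w}) to keep all coefficients non-degenerate. The output is a smoothing estimate of the form $L^\infty \lesssim (\text{weighted } L^2)^{\theta}$ on the time slab $[\tau-1, \tau]$; combined with the fractional Sobolev embedding for $H(\Omega)$ this yields the claimed bound, with the exponent $\theta = s/(N+\gamma)$ emerging from the balance between the Sobolev exponent and the boundary weight $\Phi^\gamma$ appearing in the Green's function estimate \eqref{Greens estimate}.

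Concretely, substituting $w = \varphi(1+\mathrm{h})$ into \eqref{w equation-bdd}, using the stationary equation $(-\Delta)^s \varphi = \varphi^p$ and the linearity of $(-\Delta)^s_\#$, I obtain
\begin{equation*}
p\, \varphi^p (1+\mathrm{h})^{p-1}\, \partial_\tau \mathrm{h} + (-\Delta)^s(\varphi \mathrm{h}) = \varphi^p \bigl[(1+\mathrm{h})^p - 1 \bigr].
\end{equation*}
By Corollary \ref{corollary harnack w}, $1+\mathrm{h}$ lies in a compact subset of $(0,\infty)$ uniformly in $x \in \Omega$ and $\tau \geq \tau_*$, so $(1+\mathrm{h})^{p-1}$ is comparable to a positive constant and $|(1+\mathrm{h})^p - 1| \lesssim |\mathrm{h}|$ pointwise. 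In this sense the equation has non-degenerate coefficients up to the boundary.

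For each $k \geq 1$, I would test the equation against $|\mathrm{h}|^{2k-2}\mathrm{h}$ and integrate over $\Omega$. The parabolic term produces (up to bounded factors) the derivative $\frac{\diff}{\diff \tau} \int_\Omega |\mathrm{h}|^{2k} \varphi^{p+1} \diff x$, while the diffusive term $\int_\Omega (-\Delta)^s(\varphi \mathrm{h}) \cdot |\mathrm{h}|^{2k-2} \mathrm{h} \diff x$ is estimated from below via a Stroock--Varopoulos-type inequality for $(-\Delta)^s_\#$, yielding a dissipative quantity of the form $c_k \|(\varphi \mathrm{h})^k\|_{H(\Omega)}^2$ modulo admissible weight corrections. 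Combined with the weighted Sobolev embedding for $H(\Omega)$ --- which incorporates the boundary weight $\Phi^\gamma$ via \eqref{Greens estimate} --- this produces a Nash-type differential inequality at each level $k$. Iterating on a nested sequence of time intervals collapsing onto $\{\tau\}$, doubling $k$ at each step in the standard Moser fashion, and telescoping the chain yields
\begin{equation*}
\|\mathrm{h}(\tau)\|_{L^\infty(\Omega)} \lesssim \Bigl( \sup_{\sigma \in [\tau-1, \tau]} \|w(\sigma) - \varphi\|_{L^2(\Omega,\, \varphi^{p-1} \diff x)} \Bigr)^{s/(N+\gamma)}.
\end{equation*}
Since $\varphi \mathrm{h} = w - \varphi$, the remaining weighted $L^2$-norm is controlled by $\|w(\sigma) - \varphi\|_{H(\Omega)}$ through the fractional Hardy--Sobolev inequality associated to $H(\Omega)$, and the $\sup$ on $[\tau-1, \tau]$ may trivially be enlarged to that on $[\tau-1, \infty)$.

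The main obstacle is implementing the Stroock--Varopoulos-type dissipative estimate and the accompanying weighted Sobolev embedding uniformly across the three realizations of the fractional Dirichlet Laplacian, since the boundary weight $\Phi^\gamma$ with $\gamma$ given by \eqref{gamma values} enters every step of the iteration and must be tracked precisely to produce the sharp exponent $s/(N+\gamma)$. The Harnack principle absorbs the nonlinear errors into constants that do not deteriorate as $k \to \infty$ and is what allows the Moser scheme to close.
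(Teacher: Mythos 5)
Your proposal takes a genuinely different route from the paper (which never performs a Moser iteration for this proposition: it combines the Benilan--Crandall time-monotonicity estimate, Lemma \ref{lemma time_mono_est}, with a Green's function representation of $\int_{\tau_0}^{\tau_1}\mathrm h\,\diff\tau$, Lemma \ref{lemma fun_poi_ine}, and then optimizes over the window length $\tau_1-\tau_0$). Unfortunately the proposal has two genuine gaps.

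First, the dissipativity step does not go through as stated. Testing $(-\Delta)^s(\varphi\mathrm h)$ against $|\mathrm h|^{2k-2}\mathrm h$ is \emph{not} a Stroock--Varopoulos situation: that inequality requires the test function to be an increasing function of the argument of $(-\Delta)^s$, i.e.\ of $v=\varphi\mathrm h$, whereas $|\mathrm h|^{2k-2}\mathrm h = \varphi^{-(2k-1)}|v|^{2k-2}v$ carries an $x$-dependent factor $\varphi^{-(2k-1)}$ that degenerates at $\partial\Omega$ precisely like $\Phi^{-(2k-1)}$ and whose power grows with $k$. The resulting commutator between the nonlocal operator and this degenerate weight is the entire difficulty of a boundary-weighted Moser scheme; calling it an ``admissible weight correction'' hides the step that would have to be proved, and there is no reason the constants stay under control as $k\to\infty$. (A smaller but symptomatic error: with your test function the parabolic term produces the weight $\varphi^p$, not $\varphi^{p+1}$.)

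Second, the exponent $\frac{s}{N+\gamma}$ is asserted, not derived, and its actual origin is invisible to a fixed-slab smoothing estimate. In the paper, $\gamma$ enters through the upper Green's function bound $\mathbb G(x,y)\lesssim \varphi(x)|x-y|^{-(N-2s+\gamma)}$ and the global $C^\gamma$-regularity of $\varphi$, producing the exponent $\frac{2s}{N+\gamma}$ in Lemma \ref{lemma fun_poi_ine}; the further halving to $\frac{s}{N+\gamma}$ comes from balancing that bound (weighted by $(\tau_1-\tau_0)^{-1}$) against the \emph{additive} Benilan--Crandall drift term of size $m(\tau_1-\tau_0)$, via the choice $\tau-\tau_0=(\sup\|w-\varphi\|_{H(\Omega)})^{s/(N+\gamma)}$. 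Your scheme contains neither the drift term nor the optimization over the window length, so even if the iteration closed it would yield a different exponent; the claim that $\frac{s}{N+\gamma}$ ``emerges from the balance between the Sobolev exponent and the boundary weight'' is not substantiated. To repair the argument you would either have to supply a weighted nonlocal Caccioppoli/Sobolev machinery uniform in $k$ and in the three realizations of $(-\Delta)^s_\#$, or switch to the representation-formula strategy the paper actually uses.
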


\begin{proof}[Proof of Theorem \ref{theorem uniform_conv_rel_err}]
Theorem \ref{theorem uniform_conv_rel_err} is now an immediate consequence of the bound from Proposition \ref{proposition weighted smoothing effects-bdd}. 
\end{proof}

It remains to give the proof of Proposition \ref{proposition weighted smoothing effects-bdd}. 
As mentioned in the introduction, we will prove Proposition \ref{proposition weighted smoothing effects-bdd} based on ideas used in \cite[Theorem 4.1]{Bonforte2021}. To carry these out without the $L^\infty$-smallness assumption present in \cite{Bonforte2021}, we appeal to the Global Harnack Principle from Proposition \ref{GHP}.

Following \cite{Bonforte2021}, the proof of Proposition \ref{proposition weighted smoothing effects-bdd}, which we give at the end of this section, relies on the following two lemmas.

\begin{lemma}\label{lemma time_mono_est}
For $w$ a solution to \eqref{w equation-bdd} and $\varphi$ a solution to \eqref{stationary equation-bdd}, let $\mathrm{h}(\tau, x) = \frac{w(\tau,x)}{\varphi(x)} - 1$ be the relative error function as defined in \eqref{relative-error-def}. Then the following estimates hold true for any $ \tau_1 \geq \tau_0 \geq \frac{p}{p-1} \log 2$ and almost every $ x \in \Omega$:
\begin{equation}
    \begin{aligned}
        \frac{1}{2m} & \left( 1 - e^{-2m (\tau_1 - \tau_0)} \right) \mathrm{h}(\tau_1,x) - m (\tau_1 - \tau_0)^2 \\
        &\leq \int_{\tau_0}^{\tau_1} \mathrm{h}(\tau,x) \diff{\tau} \\
        &\leq  \frac{1}{2m}  \left(e^{2m (\tau_1 - \tau_0)} -1 \right) \mathrm{h}(\tau_1,x) + m (\tau_1 - \tau_0)^2 e^{2m (\tau_1 - \tau_0) }.
    \end{aligned}
\end{equation}
\end{lemma}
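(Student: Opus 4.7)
I propose to establish the pointwise bounds
\begin{equation*}
e^{-2m(\tau_1 - \tau)} \mathrm{h}(\tau_1, x) - \bigl(1 - e^{-2m(\tau_1-\tau)}\bigr) \leq \mathrm{h}(\tau, x) \leq \mathrm{h}(\tau_1, x),
\end{equation*}
valid for a.e.\ $x \in \Omega$ and all $\tau_0 \leq \tau \leq \tau_1$, and then integrate both in $\tau$ over $[\tau_0, \tau_1]$. The upper pointwise bound $\mathrm{h}(\tau) \leq \mathrm{h}(\tau_1)$, i.e.\ the monotonicity $\partial_\tau \mathrm{h} \geq 0$, comes from the B\'enilan--Crandall pointwise estimate for \eqref{fde-bdd} with extinction time $T_*$, namely $\partial_t u + \tfrac{u}{(1-m)(T_*-t)} \geq 0$ pointwise a.e., which is established for the strong solutions we consider in \cite{BoSiVa, BoIbIs}. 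Under the substitution \eqref{w in terms of u bdd}--\eqref{change of variables t tau}, and using $\tfrac{\diff t}{\diff \tau} = (1-m)(T_*-t)$, this translates directly to $\partial_\tau w \geq 0$, hence $\partial_\tau \mathrm{h} \geq 0$.

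Integrating $\mathrm{h}(\tau) \leq \mathrm{h}(\tau_1)$ in $\tau$ gives $\int_{\tau_0}^{\tau_1} \mathrm{h}(\tau, x)\,\diff \tau \leq (\tau_1-\tau_0)\mathrm{h}(\tau_1, x)$. Combined with the a-priori bound $\mathrm{h}(\tau_1, x) \geq -1$ (equivalent to $w \geq 0$), an elementary rearrangement reduces the upper integral inequality of the lemma to the calculus fact $\tfrac{e^{2ma}-1}{2m} - a \leq m a^2 e^{2ma}$ for all $a \geq 0$, which follows by computing two derivatives.

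For the lower pointwise bound I would additionally establish $\partial_\tau w \leq \tfrac{2}{p} w$ (equivalently $\partial_\tau \mathrm{h} \leq 2m(1+\mathrm{h})$) for $\tau$ large enough. A backward Gr\"onwall argument on $1 + \mathrm{h}$ then yields $1 + \mathrm{h}(\tau, x) \geq e^{-2m(\tau_1-\tau)}(1 + \mathrm{h}(\tau_1, x))$, i.e., the lower pointwise bound displayed above; integrating in $\tau$ and using $1 - e^{-2m(\tau_1-\tau)} \leq 2m(\tau_1-\tau)$ produces the lower integral bound of the lemma with the precise remainder $-m(\tau_1-\tau_0)^2$.

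The main obstacle is therefore producing the sharp constant $\tfrac{2}{p}$ in the bound $\partial_\tau w \leq \tfrac{2}{p} w$. Using the equation $p w^{p-1}\partial_\tau w = w^p - (-\Delta)^s w$ together with $(-\Delta)^s \varphi = \varphi^p$, this is equivalent to
\begin{equation*}
(-\Delta)^s w + w^p = 2\varphi^p + (-\Delta)^s(w-\varphi) + (w^p - \varphi^p) \geq 0.
\end{equation*}
Both correction terms should be small compared to $\varphi^p$ for $\tau$ large: $w^p - \varphi^p$ is small pointwise by Corollary \ref{corollary harnack w} together with the assumed convergence $w(\tau) \to \varphi$, while $(-\Delta)^s(w-\varphi)$ must be controlled pointwise via a careful combination of the Green's function bounds \eqref{Greens estimate}, the Harnack comparability from Corollary \ref{corollary harnack w} and the $H(\Omega)$-convergence $\|w(\tau)-\varphi\|_{H(\Omega)} \to 0$. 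This quantitative control of the nonlocal remainder is the technically most delicate step.
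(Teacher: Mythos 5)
Your proposal diverges from the paper's argument in a way that creates two genuine gaps.

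\textbf{The upper bound.} You base it on the pointwise monotonicity $\partial_\tau \mathrm{h} \geq 0$, which you derive from a purported estimate $\partial_t u + \tfrac{u}{(1-m)(T_*-t)} \geq 0$. This is not the B\'enilan--Crandall estimate: the latter reads $\partial_t u \leq \tfrac{u}{(1-m)\,t}$ (an \emph{upper} bound on $u_t$, with $t$ rather than $T_*-t$ in the denominator), and no lower bound of the form you invoke is established in \cite{BoSiVa, BoIbIs}. Under \eqref{w in terms of u bdd}, your inequality is exactly equivalent to $\partial_\tau w \geq 0$, i.e., to the assertion that $w(\tau,x)$ increases monotonically to $\varphi(x)$ at every point; this fails for generic data (take $u_0$ lying above the separable profile on part of $\Omega$, so that $w(0,\cdot)>\varphi$ there and $w$ must decrease somewhere). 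So the pointwise bound $\mathrm{h}(\tau) \leq \mathrm{h}(\tau_1)$ is unjustified and in general false. (Your subsequent calculus reduction is fine, but it rests on this false premise.)

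\textbf{The lower bound.} The differential inequality $\partial_\tau \mathrm{h} \leq 2m(1+\mathrm{h})$ that you identify is precisely the one the paper uses, and your backward Gr\"onwall step plus integration is correct. But your route to it --- writing $(-\Delta)^s w + w^p = 2\varphi^p + (-\Delta)^s(w-\varphi) + (w^p-\varphi^p)$ and controlling $(-\Delta)^s(w-\varphi)$ pointwise by $\varphi^p$ --- is left unexecuted and would be very hard to execute: convergence in $H(\Omega)$ gives no pointwise control of $(-\Delta)^s(w-\varphi)$, and near $\partial\Omega$ you would need that term to vanish at the boundary rate of $\varphi^p \sim \Phi^p$. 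Moreover the lemma is asserted for all $\tau_0 \geq \tfrac{p}{p-1}\log 2$, not only for $\tau$ large. The inequality is in fact immediate from B\'enilan--Crandall and the change of variables: with $v = w^p = c\,(T_*-t)^{-1/(1-m)} u$ and $\tfrac{\diff t}{\diff \tau} = (1-m)(T_*-t)$, the estimate $u_t \leq \tfrac{u}{(1-m)\,t}$ gives $\tfrac{\partial_\tau v}{v} \leq 1 + \tfrac{T_*-t}{t} = \bigl(1 - e^{-(p-1)\tau/p}\bigr)^{-1} \leq 2$ for $\tau \geq \tfrac{p}{p-1}\log 2$, i.e., $\partial_\tau w \leq \tfrac{2}{p} w$ with no elliptic or Green's function estimates. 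This single one-sided inequality, integrated forward from $\tau_0$ and backward from $\tau_1$, yields \emph{both} integral bounds of the lemma; the additional reverse inequality $\partial_\tau \mathrm{h} \geq 0$ you plan to prove is not only unavailable but unnecessary.
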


\begin{proof}
The celebrated Benilan-Crandall estimate \cite{BeCr} (see \cite[Lemma 5.4]{BoIbIs} for the fractional setting) says that for nonnegative solutions $u$ to \eqref{fde}, one has
$$
u_t \leq \frac{u}{(1-m) t}.
$$
For the rescaled solution $ w( \tau , x ) $ defined in \eqref{w in terms of u bdd}, let $ v( \tau , x ) : = w^\frac{1}{m}$, i.e.,
$$
v(\tau,x) = \left( \frac{p-1}{p} \right)^{-\frac{p}{p-1}} ( T_* - t )^{-\frac{p}{p-1}} u(t,x), \quad t = T_* \left( 1 - \exp{\left( -\frac{p-1}{p} \tau \right)} \right).
$$
By calculation, the Benilan-Crandall inequality for $v$ becomes
$$
\frac{v_\tau (\tau,x)}{v(\tau,x)} \leq \frac{1}{1 - \exp \left( - \frac{p-1}{p} \tau \right) } \leq 2,
$$
where in the last inequality we used that $ \tau \geq \frac{p}{p-1} \log 2 $.

Then, the above inequality implies 
\begin{equation*}
    \partial_\tau \mathrm{h} \leq 2m ( \mathrm{h} + 1 ),
\end{equation*}
and thus
$$
\mathrm{h}(\tau) + 1 \leq \left( \mathrm{h} ( \tau_* ) + 1 \right) e^{2m ( \tau - \tau_* )}.
$$
Hence, for all $ \tau \geq \tau_* \geq \frac{p}{p-1} \log 2 $,
$$
\mathrm{h}(\tau) \leq e^{2m (\tau- \tau_*)} \mathrm{h}(\tau_*) + 2m ( \tau - \tau_* ) e^{2m ( \tau - \tau_* )}.
$$
Similarly, for all $ \tau \geq \tau_* \geq \frac{p}{p-1} \log 2$,
$$
\mathrm{h} (\tau_*) \geq e^{-2m ( \tau - \tau_* )} \mathrm{h} (\tau) - ( \tau - \tau_* ).
$$
As a result, for all $ \tau \in [\tau_0 , \tau_1] \subset [ \frac{p}{p-1} \log 2 , \infty ) $ we obtain
$$
\frac{\mathrm{h} ( \tau_1)}{e^{2m ( \tau_1 - \tau )}} - 2m ( \tau_1 - \tau ) \leq \mathrm{h}(\tau) \leq e^{2m ( \tau - \tau_0 )} \mathrm{h(\tau_0)} + 2m ( \tau- \tau_0) e^{2m ( \tau - \tau_0 )}.
$$
An integration on $[\tau_0,\tau_1]$ yields the lemma.
\end{proof}

\begin{lemma}\label{lemma fun_poi_ine}
  For $w$ a solution to \eqref{w equation-bdd} and $\varphi$ a solution to \eqref{stationary equation-bdd}, let $\mathrm{h}(\tau, x) = \frac{w(\tau,x)}{\varphi(x)} - 1$ be the relative error function as defined in \eqref{relative-error-def}.  Then there are $M_1, M_2 > 0$ such that the following estimates hold true for any $ \tau_1 \geq \tau_0 \geq \frac{p}{p-1} \log 2$ and almost every $ x \in \Omega$:
    \begin{equation}
        \left| \int_{\tau_0}^{\tau_1} \mathrm{h}(\tau,x) \diff{\tau} \right| \leq [ M_1 + M_2 ( \tau_1 - \tau_0 ) ] \left( \sup_{\tau \in [\tau_0 , \tau_1] } \|w(\tau) - \varphi\|_{H(\Omega)}  \right)^\frac{2s}{N+\gamma},
    \end{equation}
    where $\gamma$ is given by \eqref{gamma values}. 
\end{lemma}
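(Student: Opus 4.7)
The plan is to express $\int_{\tau_0}^{\tau_1}(w-\varphi)\,\diff\tau$ via the Green's function $\mathbb G$ of $(-\Delta)^s$ and then bound the resulting weighted integral by combining the global Harnack estimate (Corollary \ref{corollary harnack w}), Sobolev's embedding, and the sharp two-sided Green's function bound \eqref{Greens estimate}.

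First, I would integrate the equation $\partial_\tau w^p + (-\Delta)^s w = w^p$ over $[\tau_0, \tau_1]$ and subtract $(\tau_1 - \tau_0)$ times the stationary equation $(-\Delta)^s \varphi = \varphi^p$, obtaining
\begin{equation*}
(-\Delta)^s \int_{\tau_0}^{\tau_1} (w(\tau) - \varphi)\,\diff\tau = \int_{\tau_0}^{\tau_1} (w^p - \varphi^p)\,\diff\tau - (w^p(\tau_1) - w^p(\tau_0)).
\end{equation*}
Inverting through $\mathbb G$, dividing by $\varphi(x)$, and using the trivial bounds $\int_{\tau_0}^{\tau_1}|w^p - \varphi^p|\,\diff\tau \leq (\tau_1 - \tau_0)\sup_\tau|w^p - \varphi^p|$ together with $|w^p(\tau_1) - w^p(\tau_0)| \leq 2\sup_\tau|w^p - \varphi^p|$, I would reduce the claim to a uniform-in-$x$ estimate of the form
\begin{equation*}
\sup_{x \in \Omega} \frac{1}{\varphi(x)} \int_\Omega \mathbb G(x,y) \, |w^p(\tau, y) - \varphi^p(y)|\,\diff y \lesssim \|w(\tau) - \varphi\|_{H(\Omega)}^{2s/(N+\gamma)},
\end{equation*}
from which the lemma follows with the constants $M_1, M_2$ read off from the scalar factors.

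Next, the mean value theorem combined with Corollary \ref{corollary harnack w} (which gives $w(\tau) \sim \varphi \sim \Phi$) yields $|w^p - \varphi^p| \lesssim \varphi^{p-1}|w - \varphi|$. To obtain the $\|\cdot\|_{H(\Omega)}^{2s/(N+\gamma)}$-scaling, I would interpolate the function $f := w(\tau) - \varphi$ between the pointwise bound $|f| \lesssim \varphi \lesssim 1$ from Corollary \ref{corollary harnack w} and the Sobolev embedding $\|f\|_{L^{2N/(N-2s)}(\Omega)} \lesssim \|f\|_{H(\Omega)}$. This yields $\|f\|_{L^q(\Omega)} \lesssim \|f\|_{H(\Omega)}^{2N/((N-2s)q)}$ for every $q \geq 2N/(N-2s)$. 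Choosing $q = N(N+\gamma)/(s(N-2s))$ so that the exponent equals $2s/(N+\gamma)$ and applying Hölder's inequality with conjugate exponent $q'$, the desired estimate reduces to the integrability claim
\begin{equation*}
\sup_{x \in \Omega} \frac{1}{\varphi(x)} \left( \int_\Omega \mathbb G(x,y)^{q'} \varphi(y)^{(p-1)q'}\,\diff y \right)^{1/q'} < +\infty.
\end{equation*}

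The main obstacle will be verifying this last integrability bound, in which the specific value of the exponent $2s/(N+\gamma)$ arises organically. I would split the integration into the inner region $\{y : |x-y| < \Phi(x)^{1/\gamma}\}$ and its complement, applying \eqref{Greens estimate} differently in each: on the inner region the bound $\mathbb G(x,y) \lesssim |x-y|^{-(N-2s)}$ reduces the calculation to standard Lebesgue integrability, while on the outer region the bound $\mathbb G(x,y) \lesssim \Phi(x)|x-y|^{-(N-2s+\gamma)}$ contributes the $\Phi(x)$-factor needed to cancel the prefactor $1/\varphi(x) \sim 1/\Phi(x)$. The choice of $q$ is tuned precisely so that the residual singular kernels are Lebesgue integrable in each region, which makes the arithmetic $(N-2s+\gamma)q' < N$ work out and closes the argument.
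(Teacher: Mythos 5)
Your setup — representing $\int_{\tau_0}^{\tau_1}(w-\varphi)\,\diff\tau$ through the Green's function and reducing to a single-time weighted estimate — matches the paper's. The gap is in the final step: the integrability claim
\[
\sup_{x \in \Omega} \frac{1}{\varphi(x)} \Bigl( \int_\Omega \mathbb G(x,y)^{q'} \varphi(y)^{(p-1)q'}\,\diff y \Bigr)^{1/q'} < +\infty
\]
is \emph{false} for the spectral fractional Laplacian (where $\gamma = 1$) when $s$ is small and $p$ is close to $1$. Concretely, on the inner region $|x-y|<\Phi(x)^{1/\gamma}$ one has $\varphi(y)\lesssim\varphi(x)$ and $\mathbb G\lesssim |x-y|^{-(N-2s)}$, so the contribution to the left-hand side is of order $\varphi(x)^{(p-2)+s(N+2\gamma+2s)/(\gamma(N+\gamma))}$; for $\gamma=1$, $N=3$, $s=0.3$, $p=1.05$ this exponent equals $-0.53$, so the quantity blows up as $x\to\partial\Omega$. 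The outer region fails similarly: your condition $(N-2s+\gamma)q'<N$ is equivalent to $\gamma(N+\gamma-2s)<s(N+2s)$, which holds for RFL ($\gamma=s$) and CFL ($\gamma=2s-1$) but fails for SFL with $s$ below a dimensional threshold — indeed for $\gamma=1$ and $s\le\frac12$ the kernel $|x-y|^{-(N-2s+\gamma)}$ is not even locally integrable at power $1$. The root cause is that Hölder's inequality with a fixed exponent $q'>1$ over-weights both the singularity of $\mathbb G$ at $y=x$ and its boundary degeneracy; this is exactly the regime the paper flags as the one where the Bonforte--Figalli-type argument breaks down.

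The paper avoids this by never putting the Green's function into an $L^{q'}$ norm. Instead it splits the $y$-integral at a radius $r$ that is \emph{free} (not tied to $\Phi(x)^{1/\gamma}$) and is optimized only at the end. On $B_r(x)$ it uses no Sobolev information at all: the crude bounds $|w^p-\varphi^p|\lesssim \varphi^p\lesssim(\varphi(x)+|x-y|^\gamma)^p$ (Harnack plus $C^\gamma$-regularity of $\varphi$) together with the $\min$-form of \eqref{Greens estimate} give a contribution $\lesssim \varphi(x)\,r^{2s}$, small because $r$ is small. On $\Omega\setminus B_r(x)$ the kernel is bounded pointwise by $\varphi(x)\,r^{-(N-2s+\gamma)}$ and the remaining factor is controlled via $\|w-\varphi\|_{L^1}\lesssim\|w-\varphi\|_{H(\Omega)}=:M$. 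Optimizing $r^{2s}+r^{-(N-2s+\gamma)}M$ at $r=M^{1/(N+\gamma)}$ produces the exponent $\tfrac{2s}{N+\gamma}$. Your argument as written is salvageable for RFL and CFL, but to cover SFL you need this two-scale decomposition in which the $H(\Omega)$-norm only ever meets the nonsingular part of the kernel.
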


We follow the proof strategy of \cite[Lemma 4.4]{Bonforte2021}. However, due to the fractional setting and the various values which $\gamma$ can take, we need to refine the estimates of \cite{Bonforte2021} in some places. Indeed, the arguments from \cite{Bonforte2021} fail to give a useful estimate in some of the cases we consider, namely when $\gamma = 1$ (SFL) and $s \in (0,\frac{1}{2}]$. As a byproduct of this refinement, the decay exponent $\frac{2s}{N+\gamma}$ we obtain is better  than the exponent $\frac{2s-\gamma}{N}$ (which would result from following the technique of \cite{Bonforte2021}) in \emph{all} cases we consider.

\begin{proof}
    Using the equations for $w$ and $\varphi$, we find that $ \mathrm H(\tau,x) := w(\tau,x) - \varphi(x) $ satisfies
    \begin{equation}
        \label{H equation}
        (-\Delta)^s \mathrm H = -\partial_\tau w^p + (w^p - \varphi^p).  
    \end{equation} 
Since $u$ is a strong solution (see Definition \ref{weak-sol-def}), the right side of \eqref{H equation} is in $L^1(\Omega)$. Thus we can apply the Green's function for $(-\Delta)^s$ to invert \eqref{H equation} and get 
    \begin{equation*}
    \begin{aligned}
        \mathrm H(\tau,x) = - \int_\Omega (\partial_\tau w(\tau,y)^p) \mathbb{G} (x,y) \diff y + \int_\Omega (w(\tau,y)^p - \varphi(y)^p) \mathbb{G} (x,y) \diff y. 
        \end{aligned}
    \end{equation*}
    Integrating over $(\tau_0 , \tau_1)$ and recalling that $\mathrm H(\tau, x) = \mathrm h (\tau, x) \varphi(x)$, we get
    \begin{equation}\label{integral-H}
    \begin{aligned}
       \varphi(x) \int_{\tau_0}^{\tau_1} \mathrm h(\tau,x) \diff \tau&= \int_\Omega [w^p (\tau_0,y) - w^p (\tau_1,y) ] \mathbb{G} (x,y) \diff y \\
        &+ \int_{\tau_0}^{\tau_1} \int_\Omega (w^p(\tau,x) - \varphi^p(x)) \mathbb{G} (x,y) \diff y \diff \tau=: I_1 + I_2.
    \end{aligned}
    \end{equation}
    We now estimate $I_1$ and $I_2$ separately. 

   For $I_1$, with $r>0$ to be fixed, we split
    \begin{equation}
    \label{I1 split}
        \begin{aligned}
            | I_1 | &\leq \int_{B_r(x)} |w^p (\tau_0,y) - w^p (\tau_1,y) | \mathbb{G} (x,y) \diff y + \int_{\Omega \backslash B_r(x)} |w^p (\tau_0,y) - w^p (\tau_1,y) | \mathbb{G} (x,y) \diff y. 
        \end{aligned}
    \end{equation}

    On $\Omega \setminus B_r(x)$, by the numerical inequality $|x^p - y^p| \leq p (x^{p-1} \lor y^{p-1}) |x-y|$ (valid for all $x,y \geq 0$ and $ p \geq 1$) and again Corollary \ref{corollary harnack w}, we estimate 
    \begin{align*}
         | w^p(\tau_0,y) - w^p(\tau_1,y) | &\leq p \tilde{C_1}^{p-1} \| \varphi \|^{p-1}_{L^\infty(\Omega)} | w(\tau_0,y) - w(\tau_1,y) | \\
        &= p \tilde{C_1}^{p-1} \| \varphi \|^{p-1}_{L^\infty(\Omega)} | \mathrm H(\tau_0,y) - \mathrm H(\tau_1,y) | 
    \end{align*}
    where $\tilde{C_1}$ is the constant from Corollary \ref{corollary harnack w}. Moreover, we bound $\mathbb G(x,y) \leq C \frac{\varphi(x)}{|x-y|^{N-2s+\gamma}}$. This bound is a consequence of \eqref{Greens estimate} together with the fact that $\varphi \sim \Phi$ by Corollary \ref{corollary harnack w}. We obtain 
    \begin{align*}
       &\quad \int_{\Omega \backslash B_r(x)} |w^p (\tau_0,y) - w^p (\tau_1,y) | \mathbb{G} (x,y) \diff y  \\
       &\lesssim \varphi(x) \int_{\Omega \setminus B_r(x)} | \mathrm H(\tau_0,y) - \mathrm H(\tau_1,y)| \frac{1}{|x-y|^{N-2s+\gamma}} \diff y \\
       &\lesssim \varphi(x)  r^{-N+2s - \gamma} \|\mathrm H(\tau_0, \cdot) - \mathrm H(\tau_1, \cdot)\|_{L^1(\Omega)}  \\
              &\lesssim \varphi(x) 
 r^{-N+2s - \gamma} \|\mathrm H(\tau_0, \cdot) - \mathrm H(\tau_1, \cdot)\|_{H(\Omega)} \\
    &\lesssim \varphi(x)  r^{-N+2s - \gamma} \sup_{\tau \in [\tau_0, \tau_1]} \|\mathrm H(\tau, \cdot)\|_{H(\Omega)}. 
    \end{align*}
    The second-to-last inequality follows by applying Hölder's and Sobolev's inequalities.

On $B_r(x)$, we need to use the finer estimate $\mathbb G(x,y) \leq C \frac{1}{|x-y|^{N-2s}} \left( 
 \frac{\varphi(x)}{|x-y|^{\gamma}} \land 1 \right)$, which also follows from \eqref{Greens estimate} (notice that the cruder upper bound $\frac{1}{|x-y|^{N-2s+\gamma}}$ might not be integrable near $x$, namely if $\gamma = 1$ and $s \leq \frac{1}{2}$). Also, we will use the fact that there is $C > 0$ such that
 \begin{equation}
     \label{hölder varphi}
     |\varphi(x) - \varphi(y)| \leq C|x-y|^\gamma \qquad \text{ for every } x,y \in \Omega 
 \end{equation} 
 by the global $C^\gamma$-regularity of $\varphi$ (see \cite{RoSe2014} for RFL, \cite{CaSt2016} for SFL and \cite{Fall2022} for CFL). 
 
 If $r \leq \varphi(x)^\frac{1}{\gamma}$, we get (using Corollary \ref{corollary harnack w})
 \begin{align*}
     \int_{B_r(x)} |w^p (\tau_0,y) - w^p (\tau_1,y) | \mathbb{G} (x,y) \diff y &\lesssim \|\varphi\|_{L^\infty(B_{r}(x))}^p \int_{B_r(x)} \frac{1}{|x-y|^{N-2s}} \diff y \\
     & \lesssim (\varphi(x) + r^\gamma)^p r^{2s} \leq \varphi(x)^p r^{2s} \lesssim \varphi(x) r^{2s}. 
 \end{align*}
If $r > \varphi(x)^\frac{1}{\gamma}$, we split 
\begin{align*}
    &\quad  \int_{B_r(x)} |w^p (\tau_0,y) - w^p (\tau_1,y) | \mathbb{G} (x,y) \diff y \\
    &=   \int_{B_{\varphi(x)^{1/\gamma}}(x)} |w^p (\tau_0,y) - w^p (\tau_1,y) | \mathbb{G} (x,y) \diff y \\
    & \qquad \quad +  \int_{B_r(x) \setminus B_{\varphi(x)^{1/\gamma}}(x)} |w^p (\tau_0,y) - w^p (\tau_1,y) | \mathbb{G} (x,y) \diff y. 
\end{align*}
For the first term, we get as before 
\[ \int_{B_{\varphi(x)^{1/\gamma}}(x)} |w^p (\tau_0,y) - w^p (\tau_1,y) | \mathbb{G} (x,y) \diff y \lesssim \varphi^{p + \frac{2s}{\gamma}} \leq \varphi(x) r^{2s + \gamma(p-1)}.   \]
For the second term, using in particular the Hölder estimate \eqref{hölder varphi} on $\varphi$,
\begin{align*}
     &\qquad \int_{B_r(x) \setminus B_{\varphi(x)^{1/\gamma}}(x)} |w^p (\tau_0,y) - w^p (\tau_1,y) | \mathbb{G} (x,y) \diff y \\
     &\lesssim \varphi(x) \int_{B_r(x) \setminus B_{\varphi(x)^{1/\gamma}}(x)} \varphi^p(y) \frac{1}{|x-y|^{N-2s + \gamma}} \diff y  \\
     & \lesssim \varphi(x) \int_{B_r(x) \setminus B_{\varphi(x)^{1/\gamma}}(x)} (\varphi(x)+ |x-y|^\gamma)^p \frac{1}{|x-y|^{N-2s + \gamma}} \diff y  \\
     &\lesssim \varphi(x) \int_{B_r(x)} |x-y|^{-N + 2s + \gamma(p-1)} \diff y \lesssim \varphi(x) r^{2s + \gamma(p-1)}. 
\end{align*}
Collecting everything, we have proved that 
\begin{equation}
    \label{I1 bound}
    I_1 \lesssim \varphi(x) \left( r^{2s} + r^{2s + \gamma(p-1)} +  r^{-N+2s - \gamma} M \right) \lesssim \varphi(x) \left( r^{2s} +  r^{-N+2s - \gamma} M \right), 
\end{equation} 
with $M = \sup_{\tau \in [\tau_0, \tau_1]} \|\mathrm H(\tau, \cdot)\|_{H(\Omega)}$. Now choosing $r = M^\frac{1}{N+\gamma}$ gives 
\[ I_1 \lesssim  \varphi(x) M^{\frac{2s}{N+\gamma}}. \]

 The estimates for $I_2$ are almost identical to those for $I_1$, up to an additional integration over $[\tau_0, \tau_1]$. Indeed, arguing as for $I_1$, we can estimate
    \begin{equation}
        \begin{aligned}
             \int_\Omega |w^p(\tau,y) - \varphi^p| \mathbb G(x,y) \diff y &\leq C \varphi(x)  \left( r^{2s}  +  r^{-N+2s - \gamma} M \right). 
        \end{aligned}
    \end{equation}
Integrating over $[\tau_0, \tau_1]$ and choosing again $r = M^\frac{1}{N+\gamma}$, we get  
\begin{equation}
    \label{I2 bound}
    I_2\lesssim (\tau_1 - \tau_0) \varphi(x) \left( M^{\frac{2s}{N+\gamma}} + M^\frac{2s + \gamma(p-1)}{N+\gamma} \right) \lesssim  (\tau_1 - \tau_0) \varphi(x) M^{\frac{2s}{N+\gamma}}. 
\end{equation} 
    Then the lemma follows combining \eqref{integral-H} with \eqref{I1 bound} and \eqref{I2 bound}.
\end{proof}

At last we can give the proof of Proposition \ref{proposition weighted smoothing effects-bdd}, the main result in this section. 
\begin{proof}
    [Proof of Proposition \ref{proposition weighted smoothing effects-bdd}]
    We firstly prove that 
        \begin{equation}
        \label{linfty bound two terms}
        \| \mathrm{h}(\tau) \|_{L^\infty(\Omega)} \leq C \frac{e^{2m (\tau-\tau_0)}}{\tau-\tau_0} \left( \sup_{ s \in [ \tau_0 , \tau ]} \|w(\tau) - \varphi\|_{H(\Omega)} \right)^\frac{2s}{N+\gamma} + 2m ( \tau - \tau_0 ) e^{2m (\tau - \tau_0)}.
    \end{equation}
Indeed, by combining the first inequality of Lemma \ref{lemma time_mono_est} with Lemma \ref{lemma fun_poi_ine}, we obtain, for every $\tau_1 \geq \tau_0 \geq \frac{p}{p-1} \ln 2$, and almost every $x \in \Omega$, 
\begin{align*}
   \mathrm h(\tau_1,x) &\leq \frac{2m}{1 - e^{-2m (\tau_1 - \tau_0)}} \left((M_1 + M_2(\tau_1 - \tau_0) \sup_{s \in [\tau_0, \tau_1]} \|w(\tau) - \varphi\|_{H(\Omega)}^\frac{2s}{N + \gamma} + m (\tau_1 - \tau_0)^2 \right). 
\end{align*}
Since $1 - e^{-2m (\tau_1 - \tau_0)} \geq 2m (\tau_1 - \tau_0) e^{-2m (\tau_1 - \tau_0)}$ by convexity of $t \mapsto e^{-2m t}$, we obtain
\begin{align*}
   \mathrm h(\tau_1,x) &\leq M_1 \frac{e^{2m (\tau_1 - \tau_0)}}{\tau_1 - \tau_0} \sup_{s \in [\tau_0, \tau_1]} \|w(\tau) - \varphi\|_{H(\Omega)}^\frac{2s}{N+\gamma} + M_2 e^{2m (\tau_1 - \tau_0)} \sup_{s \in [\tau_0, \tau_1]} \|w(\tau) - \varphi\|_{H(\Omega)}^\frac{2s}{N+\gamma} \\
    & \qquad + m e^{2m(\tau_1 - \tau_0)} (\tau_1 - \tau_0). 
\end{align*}
For $\tau_0$ large enough, we have $M_2 e^{2m (\tau_1 - \tau_0)} \sup_{s \in [\tau_0, \tau_1]} \|w(\tau) - \varphi\|_{H(\Omega)}^\frac{2s}{N+ \gamma} \leq m$. Then the middle term  can be absorbed into the other two, and we obtain
\[\mathrm h(\tau_1, x) \leq (M_1 + m) \frac{e^{2m (\tau_1 - \tau_0)}}{\tau_1 - \tau_0} \sup_{s \in [\tau_0, \tau_1]} \|w(\tau) - \varphi\|_{H(\Omega)}^\frac{2s}{N+\gamma} + 2 m e^{2m(\tau_1 - \tau_0)} (\tau_1 - \tau_0).    \]

Similarly, combining the second inequality of Lemma \ref{lemma time_mono_est} with Lemma \ref{lemma fun_poi_ine}, we obtain, for every $\tau_1 \geq \tau_0 \geq \frac{p}{p-1} \ln 2$, and almost every $x \in \Omega$, 
\begin{align*}
    -\mathrm h(\tau_1, x) \leq \frac{2m}{e^{2m(\tau_1 - \tau_0)}} \left((M_1 + M_2 (\tau_1 - \tau_0)) \sup_{s \in [\tau_0, \tau_1]} \|w(\tau) - \varphi\|_{H(\Omega)}^\frac{2s}{N+\gamma}  + m (\tau_1 - \tau_0)^2  \right). 
\end{align*}
Since $e^{2m (\tau_1 - \tau_0)} -1 \geq 2m (\tau_1 - \tau_0) e^{2m (\tau_1 - \tau_0)} \geq 2m (\tau_1 - \tau_0)$ by convexity of $t \mapsto e^{2m t}$, we obtain
\begin{align*}
     -\mathrm h(\tau_1, x) &\leq \left(\frac{M_1}{\tau_1 - \tau_0} + M_2 \right) \sup_{s \in [\tau_0, \tau_1]} \|w(\tau) - \varphi\|_{H(\Omega)}^\frac{2s}{N+\gamma} +   m (\tau_1 - \tau_0) e^{2m (\tau_1 - \tau_0)} \\
     & \leq \left(M_1 + \frac{M_2}{2m}\right) \frac{e^{2m(\tau_1 - \tau_0)}}{\tau_1 - \tau_0} \sup_{s \in [\tau_0, \tau_1]} \|w(\tau) - \varphi\|_{H(\Omega)}^\frac{2s}{N+\gamma} +   m (\tau_1 - \tau_0) e^{2m (\tau_1 - \tau_0)}.
\end{align*}
The bound \eqref{linfty bound two terms} now follows with $\tau = \tau_1$ by combining the obtained upper bounds on $\pm \mathrm h(\tau_1, x)$. 

It is now not difficult to deduce the proposition from \eqref{linfty bound two terms}. Indeed, for given $\tau > 0$, set 
\[ \tau_0 := \tau - \sup_{\sigma \in [\tau-1, \infty)} \left( \|w(\sigma) - \varphi\|_{H(\Omega)} \right)^\frac{s}{N+\gamma}. \]
Since $w(\tau) \to \varphi$ in $H(\Omega)$, we have $\tau - \tau_0 < 1$ for every large enough $\tau$. Then $e^{2m (\tau - \tau_0)} < 1$, and \eqref{linfty bound two terms} yields
\begin{align*}
     \| \mathrm{h}(\tau) \|_{L^\infty(\Omega)} &\leq C \frac{e^{2m (\tau-\tau_0)}}{\tau-\tau_0} \left( \sup_{ s \in [ \tau_0 , \tau ]} \|w(\tau) - \varphi\|_{H(\Omega)} \right)^\frac{2s}{N+\gamma} + 2m ( \tau - \tau_0 ) e^{2m (\tau - \tau_0)} \\
     &= C \frac{1}{\tau - \tau_0} (\tau - \tau_0)^2 + 2m (\tau - \tau_0)  = (C + 2m) (\tau - \tau_0) \\
     &= (C + 2m) \sup_{\sigma \in [\tau-1, \infty)} \left( \|w(\sigma) - \varphi\|_{H(\Omega)} \right)^\frac{s}{N+\gamma}.  
\end{align*}
This completes the proof. 
\end{proof}

\subsection{Proof of Theorem \ref{thm-nonnegative-bdd}}

The proof of Theorem \ref{thm-nonnegative-bdd} is similar to, but simpler than the proof of Theorem \ref{theorem exponential decay}, so we will only point out the necessary adaptations. 

To start with, the following statement replaces Proposition \ref{proposition linearized operator}. We recall its standard proof for the convenience of the reader. 

\begin{lemma}
    \label{lemma eigenvalues bounded}
    Let $p \in (1, \frac{N+2s}{N-2s})$. Let $\varphi$ be a positive solution to \eqref{stationary equation-bdd} and let $\mathcal L_\varphi = (-\Delta)^s - p \varphi^{p-1}$. Then there is an increasing sequence of eigenvalues $\nu_j \to \infty$ and an associated sequence of eigenfunctions $(e_j)_{j \in \mathbb N_0} \subset H(\Omega)$ satisfying 
    \[ \mathcal L_\varphi e_j = \nu_j \varphi^{p-1} e_j, \]
    which can be taken to satisfy $q(e_i, e_j) = \delta_{ij}$. 
\end{lemma}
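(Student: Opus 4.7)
The plan is to recast the problem as a weighted eigenvalue problem on the Hilbert space $(H(\Omega), q)$ and apply the spectral theorem for compact self-adjoint operators. Rewriting $\mathcal L_\varphi e = \nu \varphi^{p-1} e$ as $(-\Delta)^s e = (\nu+p) \varphi^{p-1} e$, and setting $\mu := \nu + p$, the goal becomes to find a sequence $0 < \mu_0 \leq \mu_1 \leq \dots \to \infty$ and eigenfunctions $e_j$ satisfying $q(e_j, v) = \mu_j \int_\Omega \varphi^{p-1} e_j v \, \diff x$ for all $v \in H(\Omega)$. Setting $\nu_j = \mu_j - p$ will then yield the lemma.

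First I would note that $\varphi \in L^\infty(\Omega)$: indeed, the stationary function $u(t, x) = \left(\frac{p-1}{p}\right)^\frac{p}{p-1}(T_* - t)^\frac{p}{p-1} \varphi(x)^p$ is a bounded weak solution to \eqref{fde-bdd} (by the discussion in Section \ref{subsubsec notions of sol}), so $\varphi \in L^\infty(\Omega)$ follows. Consequently $\varphi^{p-1} \in L^\infty(\Omega)$, and the bilinear form $B(u,v) := \int_\Omega \varphi^{p-1} u v \, \diff x$ defines a bounded symmetric form on $L^2(\Omega)$, and hence on $H(\Omega)$ via the continuous embedding $H(\Omega) \hookrightarrow L^2(\Omega)$. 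By Riesz representation on the Hilbert space $(H(\Omega), q)$, there is a unique bounded linear operator $T: H(\Omega) \to H(\Omega)$ with $q(Tu, v) = B(u,v)$ for all $u, v \in H(\Omega)$. Symmetry of $B$ gives self-adjointness of $T$, and positivity of $\varphi^{p-1}$ gives $q(Tu, u) = \int_\Omega \varphi^{p-1} u^2 \, \diff x > 0$ for $u \not\equiv 0$, so $T$ is strictly positive.

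Next I would establish compactness of $T$. For each of the three realizations of $(-\Delta)^s$ in Section \ref{subsubsec laplacians}, the embedding $H(\Omega) \hookrightarrow L^2(\Omega)$ is compact. Thus if $u_n \rightharpoonup u$ weakly in $H(\Omega)$, then $u_n \to u$ strongly in $L^2(\Omega)$, and testing against $v = T(u_n - u)$ yields
\[
\|T(u_n - u)\|_q^2 = \int_\Omega \varphi^{p-1}(u_n - u) T(u_n-u) \, \diff x \leq \|\varphi^{p-1}\|_\infty \|u_n - u\|_{L^2} \|T(u_n - u)\|_{L^2},
\]
and a further application of $\|T(u_n-u)\|_{L^2} \lesssim \|T(u_n-u)\|_q$ (Sobolev/Poincaré) gives $\|T(u_n - u)\|_q \to 0$.

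Finally, the spectral theorem for compact, self-adjoint, positive operators on a separable Hilbert space yields a $q$-orthonormal basis $(e_j)_{j \in \N_0}$ of $H(\Omega)$ consisting of eigenfunctions of $T$, with associated eigenvalues $\lambda_j > 0$ satisfying $\lambda_j \to 0$ (the operator cannot have finite rank since $\varphi^{p-1} > 0$ a.e.). Arranging them so that $(\lambda_j)$ is decreasing, we set $\mu_j := \lambda_j^{-1}$, giving an increasing sequence with $\mu_j \to \infty$. The eigenvalue relation $Te_j = \mu_j^{-1} e_j$ translates back to the weak form $q(e_j, v) = \mu_j \int_\Omega \varphi^{p-1} e_j v \, \diff x$ for all $v \in H(\Omega)$, i.e., $(-\Delta)^s e_j = \mu_j \varphi^{p-1} e_j$ in the distributional sense. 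Setting $\nu_j := \mu_j - p$ concludes the proof. The only non-routine point is verifying compactness of $T$, which as indicated relies crucially on $\varphi^{p-1} \in L^\infty(\Omega)$ together with the compact embedding $H(\Omega) \hookrightarrow L^2(\Omega)$.
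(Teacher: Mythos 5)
Your proof is correct and follows essentially the same route as the paper: both reduce the weighted eigenvalue problem to the spectral theorem for a compact self-adjoint operator built from the solution operator/quadratic form together with the compact embedding $H(\Omega) \hookrightarrow L^2(\Omega)$. The only (cosmetic) difference is that you realize the compact operator on $(H(\Omega), q)$ via Riesz representation — which conveniently yields the $q$-orthonormality directly — whereas the paper works with the inverse of $T_\varphi = (-\Delta)^s/\varphi^{p-1}$ on $L^2(\Omega, \varphi^{p-1}\,\diff x)$; the two operators are conjugate and the arguments are interchangeable.
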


Here $q(\cdot, \cdot)$ denotes the quadratic form of $(-\Delta)^s$ from Section \ref{subsubsec laplacians}. By Poincaré's inequality, $q(\cdot, \cdot)$ is a scalar product  on $H(\Omega)$ equivalent to the standard one. 

\begin{proof}
It suffices to show that $ T = T_\varphi := \frac{(-\Delta)^s}{\varphi^{p-1}}$ is the inverse of a compact, self-adjoint operator on $L^2(\Omega, \varphi^{p-1} \diff x)$. This operator is constructed as the composition
\[ L^2(\Omega, \varphi^{p-1} \diff x) \longrightarrow H(\Omega) \longrightarrow L^2(\Omega) \longrightarrow L^2(\Omega, \varphi^{p-1} \diff x). \]
Here, the second arrow is the compact embedding $H(\Omega) \to L^2(\Omega)$. The embedding $L^2(\Omega) \to L^2(\Omega, \varphi^{p-1} \diff x)$ is continuous because $\varphi$ is bounded. The first operator is the solution operator mapping $f \in L^2(\Omega, \varphi^{p-1} \diff x)$ to a (weak) solution $u$ of $(-\Delta)^s u = f$. It is well-defined and continuous by Riesz's representation theorem and the continuous embedding $H(\Omega) \to L^2(\Omega, \varphi^{p-1} \diff x)$. Since the second map is compact and all three are continuous, the composition is compact and maps $f$ to $u$ such that $(-\Delta)^s u = f$. This is the weak formulation of $T u = f$ on $L^2(\Omega, \varphi^{p-1} \diff x)$, so the map we constructed is the desired inverse of $T$.
\end{proof}

At this point, to prove Theorem \ref{thm-nonnegative-bdd}, we can carry out the procedure used in Section \ref{section proof of theorem exp decay RN} to prove Theorem \ref{theorem exponential decay}. Since the proof is largely identical (and in fact simpler because of the non-degeneracy assumption), we omit the details and only briefly point out the relevant changes in the following paragraph. 

Clearly, the objects $\R^N$, $U$, $\dhs$ and $\dot H^{-s}(\R^N)$ from Section 3 need to be replaced by $\Omega$,  $\varphi$, $H(\Omega)$ and $H^*(\Omega)$ respectively. Testing with $\partial_\tau w$ in Lemma 3.1 can be made rigorous as explained in \cite[p.8]{Akagi2023} and references therein. Since by assumption, the kernel of $\mathcal L_\varphi$ is trivial, Step 1 in the proof of Lemma \ref{lemma hard ineq} is not needed and we can work directly with $\varphi$ instead of $U(\tau)$ in the remaining proof of Lemma \ref{lemma hard ineq}. The role of the eigenvalue $\nu_{N+2}$ in that proof is now naturally played by the eigenvalue $\tilde \nu$, each of them representing the smallest positive eigenvalue. Since in the bounded domain case, $\mathcal L_\varphi$ may a priori have more than one negative eigenvalue, instead of the decomposition $\rho(\tau) = \sigma_0(\tau) e_0(\tau) + \Tilde{\rho}(\tau)$ from \eqref{rho decomp}, we need to write more generally 
\[ \rho(\tau) = \sum_{j: \nu_j < 0} \sigma_j(\tau) e_j + \Tilde{\rho}, \]
with $\Tilde{\rho} = \sum_{j: \nu_j > 0} \sigma_j(\tau) e_j$; compare \eqref{Tilde rho}. Replacing the quantity $\sigma_0^2(\tau)$ by $\sum_{j: \nu_j < 0} \sigma_j(\tau)^2$, all the estimates based on \eqref{rho decomp} in the proof of Lemmas \ref{lemma hard ineq} and \ref{lemma rho L rho estimate} go through without further changes. Among the auxiliary lemmas from Section \ref{subsec auxiliary}, Lemmas \ref{Taylor_for} and \ref{J-ctrl-rho} can be proved without changes. Lemma \ref{rel_err_exponential} is replaced by Proposition \ref{proposition weighted smoothing effects-bdd}. Lemma \ref{lemma U z lambda} is not needed because of the non-degeneracy assumption on $\varphi$.  

\begin{remark}
    In fact, the merit of the proof of Theorem \ref{theorem exponential decay} given in Section \ref{section proof of theorem exp decay RN} with respect to the proof in Akagi's paper \cite{Akagi2023} is that it allows for degeneracy of the kernel as in \eqref{nondeg} and that it avoids the use of the inverse operator $\mathcal L_\varphi^{-1}$. However, in the situation of Theorem \ref{thm-nonnegative-bdd} these additional difficulties are not in place. Hence, as an alternative to the proof sketched above, once Theorem 
\ref{theorem uniform_conv_rel_err} is available, the proof of Theorem \ref{thm-nonnegative-bdd} could also be carried out by directly following the arguments from \cite{Akagi2023}. 
\end{remark}

\bibliographystyle{abbrv}
\bibliography{ffd-sharp-decay-ref.bib}

\begin{thebibliography}{10}

\bibitem{Akagi2023}
G.~Akagi.
\newblock Rates of convergence to non-degenerate asymptotic profiles for fast
  diffusion via energy methods.
\newblock {\em Arch. Ration. Mech. Anal.}, 247(2):Paper No. 23, 38, 2023.

\bibitem{akagi2023optimal}
G.~Akagi and Y.~Maekawa.
\newblock Optimal rate of convergence to nondegenerate asymptotic profiles for
  fast diffusion in domains.
\newblock Preprint, {arXiv}:2312.01685 [math.{AP}], 2023.

\bibitem{AkSa}
G.~Akagi and F.~Salin.
\newblock Energy solutions of the {Cauchy}-{Dirichlet} problem for fractional
  nonlinear diffusion equations.
\newblock Preprint, {arXiv}:2403.20176 [math.{AP}], 2024.

\bibitem{Bahri1988}
A.~Bahri and J.-M. Coron.
\newblock On a nonlinear elliptic equation involving the critical {Sobolev}
  exponent: {The} effect of the topology of the domain.
\newblock {\em Commun. Pure Appl. Math.}, 41(3):253--294, 1988.

\bibitem{MR604277}
P.~B\'{e}nilan and M.~G. Crandall.
\newblock The continuous dependence on {$\varphi $} of solutions of
  {$u_{t}-\Delta \varphi (u)=0$}.
\newblock {\em Indiana Univ. Math. J.}, 30(2):161--177, 1981.

\bibitem{BeCr}
P.~B{\'e}nilan and M.~G. Crandall.
\newblock Regularizing effects of homogeneous evolution equations.
\newblock Contributions to analysis and geometry, {Suppl}. {Am}. {J}. {Math}.,
  23-39., 1981.

\bibitem{MR2481073}
A.~Blanchet, M.~Bonforte, J.~Dolbeault, G.~Grillo, and J.~L. V\'{a}zquez.
\newblock Asymptotics of the fast diffusion equation via entropy estimates.
\newblock {\em Arch. Ration. Mech. Anal.}, 191(2):347--385, 2009.

\bibitem{BoDoGrVa}
M.~Bonforte, J.~Dolbeault, G.~Grillo, and J.~L. V{\'a}zquez.
\newblock Sharp rates of decay of solutions to the nonlinear fast diffusion
  equation via functional inequalities.
\newblock {\em Proc. Natl. Acad. Sci. USA}, 107(38):16459--16464, 2010.

\bibitem{Bonforte2021}
M.~Bonforte and A.~Figalli.
\newblock Sharp extinction rates for fast diffusion equations on generic
  bounded domains.
\newblock {\em Comm. Pure Appl. Math.}, 74(4):744--789, 2021.

\bibitem{MR2863762}
M.~Bonforte, G.~Grillo, and J.~L. Vazquez.
\newblock Behaviour near extinction for the {F}ast {D}iffusion {E}quation on
  bounded domains.
\newblock {\em J. Math. Pures Appl. (9)}, 97(1):1--38, 2012.

\bibitem{BoIbIs}
M.~Bonforte, P.~Ibarrondo, and M.~Ispizua.
\newblock The {Cauchy}-{Dirichlet} problem for singular nonlocal diffusions on
  bounded domains.
\newblock {\em Discrete Contin. Dyn. Syst.}, 43(3-4):1090--1142, 2023.

\bibitem{BoSiVa}
M.~Bonforte, Y.~Sire, and J.~L. V{\'a}zquez.
\newblock Existence, uniqueness and asymptotic behaviour for fractional porous
  medium equations on bounded domains.
\newblock {\em Discrete Contin. Dyn. Syst.}, 35(12):5725--5767, 2015.

\bibitem{BoVa2015}
M.~Bonforte and J.~L. V{\'a}zquez.
\newblock A priori estimates for fractional nonlinear degenerate diffusion
  equations on bounded domains.
\newblock {\em Arch. Ration. Mech. Anal.}, 218(1):317--362, 2015.

\bibitem{Brezis_1971}
H.~Br{\'e}zis.
\newblock Monotonicity methods in {Hilbert} spaces and some applications to
  nonlinear partial differential equations.
\newblock Contrib. nonlin. functional {Analysis}, {Proc}. {Sympos}. {Univ}.
  {Wisconsin}, {Madison}, 101-156, 1971.

\bibitem{MR348562}
H.~Br\'{e}zis.
\newblock {\em Op\'{e}rateurs maximaux monotones et semi-groupes de
  contractions dans les espaces de {H}ilbert}.
\newblock North-Holland Mathematics Studies, No. 5. North-Holland Publishing
  Co., Amsterdam-London; American Elsevier Publishing Co., Inc., New York,
  1973.
\newblock Notas de Matem\'{a}tica [Mathematical Notes], No. 50.

\bibitem{CaSt2016}
L.~A. Caffarelli and P.~R. Stinga.
\newblock Fractional elliptic equations, {Caccioppoli} estimates and
  regularity.
\newblock {\em Ann. Inst. Henri Poincar{\'e}, Anal. Non Lin{\'e}aire},
  33(3):767--807, 2016.

\bibitem{Choi2023}
B.~Choi, R.~J. McCann, and C.~Seis.
\newblock Asymptotics near extinction for nonlinear fast diffusion on a bounded
  domain.
\newblock {\em Arch. Ration. Mech. Anal.}, 247(2):Paper No. 16, 48, 2023.

\bibitem{Ciraolo2018}
G.~Ciraolo, A.~Figalli, and F.~Maggi.
\newblock A quantitative analysis of metrics on {$\Bbb{R}^n$} with almost
  constant positive scalar curvature, with applications to fast diffusion
  flows.
\newblock {\em Int. Math. Res. Not. IMRN}, (21):6780--6797, 2018.

\bibitem{DaSe2008}
P.~Daskalopoulos and N.~Sesum.
\newblock On the extinction profile of solutions to fast diffusion.
\newblock {\em J. Reine Angew. Math.}, 622:95--119, 2008.

\bibitem{Davila2013}
J.~D{\'a}vila, M.~Del~Pino, and Y.~Sire.
\newblock Nondegeneracy of the bubble in the critical case for nonlocal
  equations.
\newblock {\em Proc. Am. Math. Soc.}, 141(11):3865--3870, 2013.

\bibitem{DeNitti2023}
N.~{De Nitti} and T.~König.
\newblock Stability with explicit constants of the critical points of the
  fractional {S}obolev inequality and applications to fast diffusion.
\newblock {\em Journal of Functional Analysis}, 285(9):110093, 2023.

\bibitem{MR2954615}
A.~de~Pablo, F.~Quir\'{o}s, A.~Rodr\'{\i}guez, and J.~L. V\'{a}zquez.
\newblock A general fractional porous medium equation.
\newblock {\em Comm. Pure Appl. Math.}, 65(9):1242--1284, 2012.

\bibitem{MR1857048}
M.~del Pino and M.~S\'{a}ez.
\newblock On the extinction profile for solutions of {$u_t=\Delta
  u^{(N-2)/(N+2)}$}.
\newblock {\em Indiana Univ. Math. J.}, 50(1):611--628, 2001.

\bibitem{MR2944369}
E.~Di~Nezza, G.~Palatucci, and E.~Valdinoci.
\newblock Hitchhiker's guide to the fractional {S}obolev spaces.
\newblock {\em Bull. Sci. Math.}, 136(5):521--573, 2012.

\bibitem{MR1119195}
E.~DiBenedetto, Y.~Kwong, and V.~Vespri.
\newblock Local space-analyticity of solutions of certain singular parabolic
  equations.
\newblock {\em Indiana Univ. Math. J.}, 40(2):741--765, 1991.

\bibitem{Fall2022}
M.~M. Fall.
\newblock Regional fractional {Laplacians}: boundary regularity.
\newblock {\em J. Differ. Equations}, 320:598--658, 2022.

\bibitem{Feireisl2000}
E.~Feireisl and F.~Simondon.
\newblock Convergence for semilinear degenerate parabolic equations in several
  space dimensions.
\newblock {\em J. Dyn. Differ. Equations}, 12(3):647--673, 2000.

\bibitem{MR4090466}
A.~Figalli and F.~Glaudo.
\newblock On the sharp stability of critical points of the {S}obolev
  inequality.
\newblock {\em Arch. Ration. Mech. Anal.}, 237(1):201--258, 2020.

\bibitem{Frank2023}
R.~L. Frank.
\newblock The sharp {Sobolev} inequality and its stability: {An} introduction.
\newblock Preprint, {arXiv}:2304.03115 [math.{AP}], 2023.

\bibitem{Jin2014}
T.~Jin and J.~Xiong.
\newblock A fractional {Y}amabe flow and some applications.
\newblock {\em J. Reine Angew. Math.}, 696:187--223, 2014.

\bibitem{MR216342}
Y.~Komura.
\newblock Nonlinear semi-groups in {H}ilbert space.
\newblock {\em J. Math. Soc. Japan}, 19:493--507, 1967.

\bibitem{RoSe2014}
X.~Ros-Oton and J.~Serra.
\newblock The {Dirichlet} problem for the fractional {Laplacian}: regularity up
  to the boundary.
\newblock {\em J. Math. Pures Appl. (9)}, 101(3):275--302, 2014.

\end{thebibliography}

\vfill

\end{document}